\newtheorem{theorem}{Theorem}
\newtheorem{notation}{Notation}
\newtheorem{corollary}[theorem]{Corollary}
\newtheorem{lemma}[theorem]{Lemma}
\newtheorem{proposition}[theorem]{Proposition}
\newtheorem{definition}[theorem]{Definition}
\newtheorem{remark}[theorem]{Remark}
\numberwithin{theorem}{section}
\numberwithin{equation}{section}
\newcommand{\Comp}{\mathbb{C}}
\newcommand{\F}{\mathcal{F}}
\newcommand{\g}{\mathbb{G}}
\newcommand{\M}{\mathcal{M}}
\newcommand{\n}{\mathbb{N}}
\newcommand{\tor}{\mathbb{T}}
\newcommand{\z}{\mathbb{Z}}
\begin{document}
\title{Hardy-Littlewood inequalities on compact quantum groups of Kac type}

\author{Sang-Gyun Youn}

\address{Sang-Gyun Youn :}
\email{yun87654@snu.ac.kr}

\keywords{Hardy-Littlewood inequality, Quantum groups, Fourier analysis}
\thanks{2010 \it{Mathematics Subject Classification}.
\rm{Primary 20G42, Secondary 46L52}\\The author is supported by TJ Park Science Fellowship.}

\begin{abstract}
The Hardy-Littlewood inequality on $\mathbb{T}$ compares the $L^p$-norm of a function with a weighted $\ell^p$-norm of its Fourier coefficients. The approach has recently been studied for compact homogeneous spaces and we study a natural analogue in the framework of compact quantum groups. Especially, in the case of the reduced group $C^*$-algebras and free quantum groups, we establish explicit $L^p-\ell^p$ inequalities through inherent information of underlying quantum group, such as growth rate and rapid decay property. Moreover, we show sharpness of the inequalities in a large class, including $C(G)$ with compact Lie group, $C_r^*(G)$ with polynomially growing discrete group and free quantum groups $O_N^+$, $S_N^+$.
\end{abstract}
\maketitle

\section{Introduction}
Hardy and Littlewood \cite{HL27} showed that, for each $1<p\leq 2$, there exists a constant $C_p$ such that
\begin{equation}\label{HL}
(\sum_{n\in \z}\frac{1}{(1+|n|)^{2-p}}|\widehat{f}(n)|^p)^{\frac{1}{p}}\leq C_p ||f\sim \sum_{n\in \z}\widehat{f}(n)z^n||_{L^p(\tor)}~\mathrm{for~all~}f\in L^p(\tor).
\end{equation}

This implies that the multiplier $\mathcal{F}_w:L^p(\tor)\rightarrow l^p(\z)$, $f\mapsto (w(n)\widehat{f}(n))_{n\in \z}$, with $w(n):=\displaystyle \frac{1}{(1+|n|)^{\frac{2-p}{p}}}$ is bounded. Moreover, this is a stronger form of the Sobolev embedding theorem 
\[H^{\frac{1}{p}-\frac{1}{q}}_{p}(\tor)\subseteq L^{q}(\tor),~\forall 1<p<q<\infty,\]
where $H^{s}_p(\tor):=\left \{f\in L^p(\tor):(1-\Delta)^{\frac{s}{2}}(f)\in L^p(\tor)\right\}$ is the Bessel potential space.

``The Hardy-Littlewood inequality (\ref{HL})'' had been studied for compact abelian groups by Hewitt and Ross \cite{HR74}, and was recently extended to compact homogeneous manifolds by Akylzhanov, Nursultanov and Ruzhansky [\cite{ANR14} and \cite{ANR15}]. For compact Lie groups $G$ with the real dimension $n$, the result of \cite{ANR15} is written as follows: For each $1<p\leq 2$, there exists a constant $C_p>0$ such that
\begin{equation}\label{HL.cpt.gp.}
(\sum_{\pi\in \widehat{G}}\frac{1}{(1+\kappa_{\pi})^{\frac{n(2-p)}{2}}}n_{\pi}^{2-\frac{p}{2}}||\widehat{f}(\pi)||_{HS}^p)^{\frac{1}{p}}\leq C_p ||f||_{L^p(G)}\mathrm{~for~all~}f\in L^p(G).
\end{equation}

Here, $\widehat{G}$ denotes a maximal family of mutually inequivalent irreducible unitary representations of $G$, $||A||_{HS}:=\mathrm{tr}(A^*A)^{\frac{1}{2}}$ and the Laplacian operator $\Delta$ on $G$ satisfies $\Delta:\pi_{i,j}\mapsto -\kappa_{\pi}\pi_{i,j}$ for all $\pi=(\pi_{i,j})_{1\leq i,j\leq n_{\pi}}\in \widehat{G}$ and all $1\leq i,j\leq n_{\pi}$.

The above inequality (\ref{HL.cpt.gp.}) can be reduced to a more familiar form whose left hand side is a natural weighted $\ell^p$-norm of its Fourier coefficients:
	\begin{align}\label{HL-cpt.gp.2}
	(\sum_{\pi\in \widehat{G}}\frac{1}{(1+\kappa_{\pi})^{\frac{n(2-p)}{2}}} n_{\pi}||\widehat{f}(\pi)||_{S^p_{n_{\pi}}}^p)^{\frac{1}{p}} &\leq C_p ||f||_{L^p(G)}.
	\end{align}

A notable point is that ``the Hardy-Littlewood inequalities on compact Lie groups (\ref{HL.cpt.gp.})'' are determined by inherent geometric information, namely the real dimension and the natural length function on $\widehat{G}$. Indeed, $\pi\mapsto \sqrt{\kappa_{\pi}}$ is equivalent to the natural length $||\cdot||$ on $\widehat{G}$ (See Remark \ref{rmk1}).

The main purpose of this paper is to establish new Hardy-Littlewood inequalities on compact quantum groups of Kac type through geometric information of underlying quantum groups. As a part of efforts, we will present some explicit inequalities in important examples. The reduced group $C^*$-algebras $C_r^*(G)$ with discrete groups $G$, the free orthogonal quantum groups $O_N^+$ and the free permutation quantum groups $S_N^+$ are the main targets. Of course, non-commutative $L^p$ analysis on quantum groups is widely studied from various perspectives (\cite{Ca13}, \cite{FHLU15}, \cite{JMP14}, \cite{JPPP13} and \cite{Wa16}). For details of operator algebraic approach to quantum group itself, see \cite{KV00}, \cite{KV03}, \cite{Ti08} and \cite{Wo87}.

To clarify our intention, let us show the main results of this article on ``compact matrix quantum groups'' which admit the natural length function $|\cdot|:\mathrm{Irr}(\g)\rightarrow \left \{0\right\}\cup \n$ (See Definition \ref{CMQG1} and Proposition \ref{CMQG2}). The following inequalities are determined by inherent information of underlying quantum group, namely growth rate and rapid decay property.

\begin{theorem}\label{main}

Let $\g$ be a compact matrix quantum group of Kac type and denote by $|\cdot |$ the natural length function on $\mathrm{Irr}(\g)$.
\begin{enumerate}
\item Let $\g$ have a polynomial growth with $\displaystyle \sum_{\alpha\in \mathrm{Irr}(\g):|\alpha|\leq k}n_{\alpha}^2\leq (1+k)^{\gamma}$ and $\gamma>0$. Then, for each $1<p\leq 2$, there exists a universal constant $K=K(p)$ such that
\begin{align}
&(\sum_{\alpha\in \mathrm{Irr}(\g)}\frac{1}{(1+|\alpha|)^{(2-p)\gamma}}n_{\alpha}||\widehat{f}(\alpha)||_{S^p_{n_{\alpha}}}^p)^{\frac{1}{p}}\\
\notag \leq& (\sum_{\alpha\in \mathrm{Irr}(\g)} \frac{1}{(1+|\alpha|)^{(2-p)\gamma}}n_{\alpha}^{2-\frac{p}{2}}||\widehat{f}(\alpha)||_{HS}^p)^{\frac{1}{p}}\leq  K||f||_{L^p(\g)}
\end{align}
for all $f\sim \displaystyle \sum_{\alpha\in \mathrm{Irr}(\g)}n_{\alpha}\mathrm{tr}(\widehat{f}(\alpha)u^{\alpha})\in L^p(\g)$.
\item Let $\g$ have the rapid decay property with universal constants $C,\beta>0$ such that
\[||f||_{L^{\infty}(\g)}\leq C(1+k)^{\beta}||f||_{L^2(\g)}\]
for all $f\in \mathrm{span}(\left \{u^{\alpha}_{i,j}:|\alpha|=k,~1\leq i,j\leq n_{\alpha}\right\})$. Define $\displaystyle s_k:=\sum_{\alpha\in \mathrm{Irr}(\g):|\alpha|=k}n_{\alpha}^2$. Then, for each $1<p\leq 2$, there exists a universal constant $K=K(p)$ such that
\begin{align}\label{main-1-p>1}
&(\sum_{k\geq 0}\sum_{\alpha\in \mathrm{Irr}(\g):|\alpha|=k} \frac{1}{s_k^{\frac{(2-p)}{2}}(1+k)^{(2-p)(\beta+1)}}n_{\alpha}||\widehat{f}(\alpha)||_{S^p_{n_{\alpha}}}^p)^{\frac{1}{p}}\\
\notag\leq & (\sum_{k\geq 0}\frac{1}{(1+k)^{(2-p)(\beta+1)}}(\sum_{\alpha\in \mathrm{Irr}(\g):|\alpha|=k}n_{\alpha}||\widehat{f}(\alpha)||_{HS}^2)^{\frac{p}{2}})^{\frac{1}{p}}\leq K ||f||_{L^p(\g)}
\end{align}
for all $f\displaystyle \sim \sum_{\alpha\in \mathrm{Irr}(\g)}n_{\alpha}\mathrm{tr}(\widehat{f}(\alpha)u^{\alpha})\in L^p(\g)$.
\end{enumerate}
\end{theorem}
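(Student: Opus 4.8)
The plan is to derive both parts from a complex interpolation argument between $L^2$ and $L^\infty$, exactly as in the classical Hardy--Littlewood proof and its homogeneous-space analogues, but working on the von Neumann algebra $L^\infty(\g)$ with its tracial Haar state (Kac type is essential here so that the GNS trace is tracial and the noncommutative $L^p$-spaces behave symmetrically). First I would set up, for each fixed length level $k$, the truncated Fourier projection $P_k\colon f\mapsto \sum_{|\alpha|=k} n_\alpha \mathrm{tr}(\widehat f(\alpha)u^\alpha)$ onto $\mathcal{W}_k:=\mathrm{span}\{u^\alpha_{i,j}:|\alpha|=k\}$. On $L^2$ this is an orthogonal projection and $\|P_k f\|_{L^2}^2=\sum_{|\alpha|=k} n_\alpha\|\widehat f(\alpha)\|_{HS}^2$ by the Parseval/Plancherel identity for compact quantum groups. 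The growth/rapid-decay hypotheses each give a bound $\|P_k\colon L^2\to L^\infty\|\le M_k$ with $M_k=(1+k)^\beta C$ in case (2), and in case (1) one uses the cruder global estimate $\|P_k\|_{L^1\to L^\infty}\le s_k$ (dimension of $\mathcal{W}_k$) together with Haagerup-type inequalities, which after summing the level dimensions yields the polynomial bound $(1+k)^\gamma$.

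Next I would interpolate. The key auxiliary operator is $T\colon f\mapsto (P_k f)_{k\ge0}$ viewed as a map into an $\ell^p$-direct sum of the spaces $L^{p'}(\g)$ or rather the ``diagonal'' Schatten-type spaces; concretely, one wants the vector-valued inequality $\big\|(w_k P_kf)_k\big\|_{\ell^p(L^\infty)}\lesssim \|f\|_{L^p}$ for a suitable weight $w_k$ and then passes from $L^\infty$-norm of $P_kf$ down to the Fourier-side norm on $\mathcal{W}_k$. For $p=2$ the left side with weight $w_k=1$ is controlled by $\big(\sum_k \|P_kf\|_{L^2}^2\big)^{1/2}=\|f\|_{L^2}$, so one needs only that $\|P_kf\|_{L^\infty}\ge$ (Fourier $HS$-norm of level $k$) divided by an appropriate power of $s_k$ — this is where $\|P_k\|_{L^2\to L^\infty}\le M_k$ is used in the reverse direction, i.e. $\|P_kf\|_{L^2}\le \|P_kf\|_{L^\infty}$ trivially but one actually inserts $M_k$ when comparing $\|P_kf\|_{L^\infty}$ to a normalized $HS$-quantity. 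For $p=1$ one uses $\|P_kf\|_{L^\infty}\le (\text{kernel bound})\,\|f\|_{L^1}$. Then Riesz--Thorin / Stein interpolation between $(p=1)$ and $(p=2)$ produces, for $1<p\le2$, the estimate with weight $w_k$ equal to the geometric interpolant of the two endpoint weights; choosing exponents so that $\theta$ satisfies $1/p=\theta/1+(1-\theta)/2$, i.e.\ $\theta=2/p-1=(2-p)/p$, gives precisely the powers $s_k^{-(2-p)/2}$ and $(1+k)^{-(2-p)(\beta+1)}$ appearing in \eqref{main-1-p>1} (the extra ``$+1$'' in the exponent comes from an $\ell^1$-vs-$\ell^p$ summation loss over the levels $k$, handled by Hölder with a convergent factor $(1+k)^{-(2-p)}$).

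After the interpolation yields the outer inequality $\big(\sum_k (1+k)^{-(2-p)(\beta+1)}\big(\sum_{|\alpha|=k}n_\alpha\|\widehat f(\alpha)\|_{HS}^2\big)^{p/2}\big)^{1/p}\le K\|f\|_{L^p}$, the remaining, purely ``commutative'' step is the left-hand inequality of \eqref{main-1-p>1}, comparing $\sum_{|\alpha|=k}n_\alpha s_k^{-(2-p)/2}\|\widehat f(\alpha)\|_{S^p_{n_\alpha}}^p$ with $\big(\sum_{|\alpha|=k}n_\alpha\|\widehat f(\alpha)\|_{HS}^2\big)^{p/2}$ at each level. This is the Schatten-space inequality $\|A\|_{S^p}\le \|A\|_{S^2}=\|A\|_{HS}$ for $p\le2$, combined with Hölder over the (finitely many, at level $k$) indices $\alpha$ with the counting measure weighted by $n_\alpha$: writing $a_\alpha=n_\alpha\|\widehat f(\alpha)\|_{HS}^2$, one has $\sum_\alpha n_\alpha^{1-p/2}a_\alpha^{p/2}\le (\sum_\alpha 1)^{1-p/2}(\sum_\alpha n_\alpha a_\alpha)^{\,}\!{}$-type bound, and normalizing by $s_k$ absorbs the cardinality factor; the analogous step for part (1) is identical after replacing $s_k$-level sums by the global sum controlled by $(1+k)^\gamma$.

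The main obstacle I anticipate is making the interpolation rigorous in the noncommutative setting: one must realize the map $f\mapsto (P_kf)_k$ as a single operator on a compatible couple of noncommutative $L^p$-spaces (with values in an $\ell^p$-sum of finite-dimensional matrix algebras, or in $L^\infty(\g)$ at one endpoint), verify that the endpoint bounds hold with the claimed constants, and check that complex interpolation of the vector-valued/$\ell^p$-sum spaces gives the stated weighted $\ell^p$ target — in particular that $[\ell^1(X_0),\ell^2(X_1)]_\theta=\ell^p(\,[X_0,X_1]_\theta\,)$ with the right weight, which is standard but needs the spaces $\mathcal{W}_k$ to be handled uniformly in $k$. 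A secondary technical point is obtaining the endpoint $L^2\to L^\infty$ bound on $P_k$ directly from the stated rapid-decay hypothesis (it is stated only for $f$ supported on level $k$, which is exactly the range of $P_k$, so this should be immediate) and, for part (1), extracting the $L^1\to L^\infty$ kernel bound on $P_k$ from the polynomial-growth hypothesis via the reproducing character $\sum_{|\alpha|\le k}n_\alpha\chi_\alpha$ and the Haagerup inequality — this last estimate is the one place where ``Kac type'' and the explicit form of the growth assumption are genuinely used.
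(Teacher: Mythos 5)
Your overall architecture (Plancherel at the $L^2$ end, a rapid-decay or trivial Fourier-coefficient bound at the $L^1$ end, interpolation, then a H\"older--Schatten step to pass from the $HS$-quantities to the $S^p_{n_\alpha}$-norms) matches the paper's, and that last step is correct. The genuine gap is the interpolation tool: you propose Riesz--Thorin/Stein (complex) interpolation between \emph{strong} endpoint bounds at $p=1$ and $p=2$, but the strong $L^1$-endpoint is false at the critical weight, so this route cannot yield the stated exponents. Concretely, for part (2) the critical $\ell^1$-endpoint would read $\sum_{k\geq 0}(1+k)^{-(\beta+1)}\|p_k f\|_{L^2(\g)}\lesssim \|f\|_{L^1(\g)}$; rapid decay (via duality, Proposition \ref{prop.rd.}) only gives $\|p_kf\|_{L^2(\g)}\leq C(1+k)^{\beta}\|f\|_{L^1(\g)}$, so the sum diverges logarithmically, and already for $\g=C(\mathbb{T})$ the analogous strong endpoint $\sum_{n}|\widehat f(n)|/(1+|n|)\lesssim \|f\|_{L^1(\mathbb{T})}$ is the classically false $L^1$-version of Hardy's inequality. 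Complex interpolation from an $\epsilon$-stronger weight at $p=1$ would only produce the theorem with $(2-p)(\beta+1)$ replaced by $(2-p)(\beta+1+\epsilon)$, which is strictly weaker and incompatible with the sharpness results of Section \ref{Sharp}. What is needed---and what the paper uses---is a \emph{weak-type} $(1,1)$ estimate for the sublinear map $f\mapsto(\|p_kf\|_{L^2(\g)}/w(k))_{k}$ on the measure space $(\{0\}\cup\mathbb{N},\nu)$ with $\nu(k)=w(k)^2$, obtained from Proposition \ref{prop.rd.} by a distribution-function/Fubini computation, followed by the noncommutative Marcinkiewicz (real) interpolation theorem, Corollary \ref{Mar3}. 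Relatedly, your explanation of the ``$+1$'' as a H\"older loss against a convergent factor $(1+k)^{-(2-p)}$ cannot be right, since that factor is not summable for $p$ near $2$; the $+1$ is exactly what makes the weak-type constant $C_w=\sup_{y>0}\,y\sum_{k:(1+k)^{2\beta+1}\leq 1/y}(1+k)^{2\beta}$ finite.

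Two smaller points. For part (1) you invoke ``Haagerup-type inequalities,'' but no rapid decay property is assumed there and none is needed: the paper does not decompose by length-levels at all in that case, but applies the weak-$(1,1)$/Marcinkiewicz scheme directly to $Af=(\|\widehat f(\alpha)\|_{HS}/(\sqrt{n_\alpha}\,w(\alpha)))_{\alpha}$ with $\nu(\alpha)=w(\alpha)^2n_\alpha^2$ and $w(\alpha)=(1+|\alpha|)^{-\gamma}$, using only the trivial bound $\|\widehat f(\alpha)\|_{HS}/\sqrt{n_\alpha}\leq\|\widehat f(\alpha)\|\leq\|f\|_{L^1(\g)}$ and the growth hypothesis to control $C_w$ (Theorem \ref{paley-CQG} and Corollary \ref{poly1}). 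Finally, your plan to map into $\ell^p(L^\infty)$ and then pass ``down'' from $\|P_kf\|_{L^\infty}$ to the level-$k$ Fourier quantity points the inequality the wrong way at that end of the couple; the clean formulation is to map into the scalar sequence $(\|p_kf\|_{L^2(\g)}/w(k))_k$ from the outset, which avoids vector-valued interpolation entirely.
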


In particular, it was known that the rapid decay property of $\mathbb{F}_N$ can be strengthened in general holomorphic setting \cite{KS07}. The improved result is called the strong Haagerup inequality. Based on this data, we can improve the Hardy-Littlewood inequality for $C_r^*(\mathbb{F}_N)$ by focusing our attention on holomorphic forms. Theorem \ref{strong1} justifies the claim and it seems appropriate to call the improved one ``strong Hardy-Littlewood inequality''.

A natural view of the Hardy-Littlewood inequalities on compact Lie groups is that a properly chosen weight function $w:\widehat{G}\rightarrow (0,\infty)$ makes the corresponding multiplier 
\[\mathcal{F}_w:L^p(G)\rightarrow \ell^p(\widehat{G})\mathrm{~given~by}~f\mapsto (w(\pi)\widehat{f}(\pi))_{\pi\in \widehat{G}}\] 
be bounded for each $1<p\leq 2$. Indeed, newly derived Hardy-Littlewood inequalities on compact quantum groups will give a decay pair $(r,s)$ whose corresponding multiplier $\mathcal{F}_{w_{r,s}}$ is bounded, where $w_{r,s}(\alpha):=\displaystyle \frac{1}{r^{|\alpha|}(1+|\alpha|)^s}$. Moreover, in Section 6, we will show that there is no slower decay pair such that $\mathcal{F}_{r,s}$ is bounded when $\g$ is one of the followings: $C(G)$ with compact Lie groups, $C_r^*(G)$ with polynomially growing discrete group and free quantum groups $O_N^+$, $S_N^+$. See Theorem \ref{comp.char.}.

This approach is quite natural because it is strongly related to Sobolev embedding properties. Indeed, for the case $G=\tor^d$, $\mathcal{F}_{w_{0,s}}:L^p(\tor^d)\rightarrow l^p(\z^d)$ is bounded if and only if  
\[H^{\frac{ps}{2-p}(\frac{1}{q}-\frac{1}{r})}_q(\tor^d) \subseteq L^r(\tor^d)~\forall 1<q<r<\infty,\]
where $H^s_p(\tor^d)$ is the Bessel potential space.

Lastly, in Section 7, we present some remarks that are by-products of this research. We show that most of free quantum groups do not admit an infinite (central) sidon set, and also give a Sobolev embedding theorem type interpretation of our results for $C(G)$ with compact Lie group and $C_r^*(G)$ with polynomially growing discrete group $G$. Also, we present an explicit inequality on quantum torus $\tor^d_{\theta}$.

\section{Preliminaries}

\subsection{Compact quantum groups}

A compact quantum group $\g$ is given by a unital $C^*$-algebra $A$ and a unital $*$-homomorphism $\Delta:A\rightarrow A\otimes_{min} A$ satisfying 

$(1)~(\Delta\otimes id)\circ \Delta=(id\otimes \Delta)\circ \Delta;$

$(2)~span\left \{ \Delta(a)(b\otimes 1_A):~a,b\in A\right\},span\left \{ \Delta(a)(1_A\otimes b):~a,b\in A\right\}\mathrm{~are~dense~in~}A.$\\

Every compact quantum group admits the unique $Haar~state$ $h$ on $A$ such that
\[(h\otimes id)(\Delta(x))=h(x)1_A=(id\otimes h)(\Delta(x))~\mathrm{for~all~}x\in A.\]

A finite dimensional corepresentation of $\g$ is given by an element $\displaystyle u=(u_{i,j})_{1\leq i,j\leq n}\in M_n(A)$ such that $\Delta(u_{i,j})=\displaystyle \sum_{k=1}^n u_{i,k}\otimes u_{k,j}$ for all $1\leq i,j\leq n$. We say that the corepresentation $u$ is unitary if $u^*u=uu^*=Id_n\otimes 1_A\in M_n(A)$ and irreducible if $\left \{X\in M_n: Xu=uX\right\}=\Comp\cdot Id_n$ where $Id_n$ is the identity matrix in $M_n$.

Let $\left \{u^{\alpha}=(u^{\alpha}_{i,j})_{1\leq i,j\leq n_{\alpha}}\right\}_{\alpha\in \mathrm{Irr}(\g)}$ be a maximal family of mutually inequivalent finite dimensional unitary irreducible corepresentations of $\g$. It is well known that, for each $\alpha\in \mathrm{Irr}(\g)$, there is a unique positive invertible matrix $Q_{\alpha}\in M_{n_{\alpha}}$ such that $\mathrm{tr}(Q_{\alpha})=\mathrm{tr}(Q_{\alpha}^{-1})$ and 
\[h((u^{\beta}_{s,t})^*u^{\alpha}_{i,j})=\frac{\delta_{\alpha,\beta}\delta_{j,t}(Q_{\alpha}^{-1})_{i,s}}{\mathrm{tr}(Q_{\alpha})},~\forall \alpha,\beta\in \mathrm{Irr}(\g),1\leq i,j\leq n_{\alpha},1\leq s,t\leq n_{\beta},\]
\[~h(u^{\beta}_{s,t}(u^{\alpha}_{i,j})^*)=\frac{\delta_{\alpha,\beta}\delta_{i,s}(Q_{\alpha})_{j,t}}{\mathrm{tr}(Q_{\alpha})},~\forall \alpha,\beta\in \mathrm{Irr}(\g),1\leq i,j\leq n_{\alpha},1\leq s,t\leq n_{\beta}.\]

We say that $\g$ is of Kac type if $Q_{\alpha}=Id_{n_{\alpha}}\in M_{n_{\alpha}}$ for all $\alpha\in \mathrm{Irr}(\g)$. In this case, the Haar state $h$ is tracial.

Lastly, we define $C_r(\g)$ as the image of $A$ in the GNS representation of the Haar state $h$ and $L^{\infty}(\g):=C_r(\g)''$. The Haar state $h$ has a normal faithful extension to $L^{\infty}(\g)$.

\subsection{Non-commutative $L^p$-spaces}

Let $\M$ be a von Neumann algebra with a normal faithful tracial state $\varphi$. Note that the von Neumann algebra $\M$ admits the unique predual $\M_*$. We define $L^1(M,\varphi):=\M_*$ and $L^{\infty}(M,\varphi):=\M$, then consider a contractive injection $j:\M \rightarrow \M_*$, given by $[j(x)](y):= h(yx)$ for all $y\in \M$. The map $j$ has dense range.

Now $(\M,\M_*)$ is a compatible pair of Banach spaces and we can define non-commutative $L^p$-space $L^p(\M,\varphi):=(\M,\M_*)_{\frac{1}{p}}$ for all $1<p<\infty$, where $(\cdot, \cdot)_{\frac{1}{p}}$ is the complex interpolation space. For any $x\in L^{\infty}(\M,\varphi)$, its $L^p$-norm is given by $\displaystyle ||x||_{L^p(\M,\varphi)}=\varphi(|x|^p)^{\frac{1}{p}}$ for all $1\leq p<\infty$.

In particular, for all $1\leq p\leq \infty$, we denote by $L^p(\g)$ the non-commutative $L^p$-space associated to the von Neumann algebra $L^{\infty}(\g)$ of a Kac type compact quantum group $\g$ and the tracial Haar state $h$. Then the space of polynomials 
\[Pol(\g):=\mathrm{span}(\left \{u^{\alpha}_{i,j}:\alpha\in \mathrm{Irr}(\g),1\leq i,j\leq n_{\alpha}\right\})\]
is dense in $C_r(\g)$ and $L^p(\g)$ for all $1\leq p<\infty$.

Under the assumption that $\g$ is of Kac type, for $1\leq p <\infty$, 
\[\ell^p(\widehat{\g}):=\left \{(A_{\alpha})_{\alpha\in \mathrm{Irr}(\g)}\in \prod_{\alpha\in \mathrm{Irr}(\g)}M_{n_{\alpha}}: \sum_{\alpha\in \mathrm{Irr}(\g)}n_{\alpha}\mathrm{tr}(|A_{\alpha}|^p)<\infty \right\}\]
and the natural $\ell^p$-norm of $(A_{\alpha})_{\alpha\in \mathrm{Irr}(\g)}\in \ell^p(\widehat{\g})$ is defined by 
\[||(A_{\alpha})_{\alpha\in \mathrm{Irr}(\g)}||_{\ell^p(\widehat{\g})}:=(\sum_{\alpha\in \mathrm{Irr}(\g)}n_{\alpha}\mathrm{tr}(|A_{\alpha}|^p))^{\frac{1}{p}}=(\sum_{\alpha\in \mathrm{Irr}(\g)}n_{\alpha}||A_{\alpha}||_{S^p_{n_{\alpha}}}^p)^{\frac{1}{p}}.\]

Also, 
\[\ell^{\infty}(\widehat{\g}):=\left \{(A_{\alpha})_{\alpha\in \mathrm{Irr}(\g)}\in \prod_{\alpha\in \mathrm{Irr}(\g)}M_{n_{\alpha}}: \sup_{\alpha\in \mathrm{Irr}(\g)}||A_{\alpha}||<\infty \right\}\]
and the $\ell^{\infty}$-norm of $(A_{\alpha})_{\alpha\in \mathrm{Irr}(\g)}\in \ell^{\infty}(\widehat{\g})$ is defined by 
\[||(A_{\alpha})_{\alpha\in \mathrm{Irr}(\g)}||_{\ell^{\infty}(\widehat{\g})}:=\sup_{\alpha\in \mathrm{Irr}(\g)}||A_{\alpha}||.\]

It is known that $\ell^1(\widehat{\g})=(\ell^{\infty}(\widehat{\g}))_*$ and $\ell^p(\widehat{\g})=(\ell^{\infty}(\widehat{\g}),\ell^1(\widehat{\g}))_{\frac{1}{p}}$ for all $1<p<\infty$.

\subsection{Fourier analysis on compact quantum groups}

For a compact quantum group $\g$, the Fourier transform $\F:L^1(\g)\rightarrow \ell^{\infty}(\widehat{\g})$, $\psi\mapsto \widehat{\psi}$, is defined by
\[(\widehat{\psi}(\alpha))_{i,j}:=\psi((u^{\alpha}_{j,i})^*)~\mathrm{for~all~}\alpha\in \mathrm{Irr}(\g),1\leq i,j\leq n_{\alpha}.\]

It is also known that $\F$ is an injective contractive map and it is an isometry from $L^2(\g)$ onto $l^2(\widehat{\g})$ (\cite{Wa16}, Proposition 3.1 and 3.2). Then, by the interpolation theorem, we induce the Hausdorff-Young inequality again, i.e. $\F$ is a contractive map from $L^p(\g)$ into $l^{p'}(\widehat{\g})$ for each $1\leq p\leq 2$, where $p'$ is the conjugate of $p$.

\subsection{The reduced group $C^*$-algebras}

The reduced group $C^*$-algebra $C_r^*(G)$, can be defined for all locally compact groups, but we only consider discrete groups in this paper since we want to understand it as a compact quantum group.

\begin{definition}

Let $G$ be a discrete group and define $\lambda_g\in B(l^2(G))$ for each $g\in G$ by 
\[[(\lambda_g)(f)](x):=f(g^{-1}x)~\mathrm{for~all~} x\in G.\]
Then the reduced group $C^*$-algebra, $C_r^*(G)$, is defined as the norm-closure of the space $\mathrm{span}(\left \{ \lambda_g:~g\in G\right\})$ in $B(l^2(G))$. Moreover, if we define a comultiplication $\Delta:C_r^*(G)\rightarrow C_r^*(G)\otimes_{min}C_r^*(G)$ by $\lambda_g\mapsto \lambda_g\otimes \lambda_g$ for all $g\in G$, then $(C_r^*(G),\Delta)$ forms a compact quantum group.
\end{definition}

Note that, for $\g=(C_r^*(G),\Delta)$ of a discrete group $G$, $L^{\infty}(\g)$ is nothing but the group von Neumann algebra $VN(G)$ and $\mathrm{Irr}(\g)=\left \{\lambda_g\right\}_{g\in G}$ is identified with $G$.

\subsection{Free quantum groups of Kac type}

\begin{definition}(Free orthogonal quantum group \cite{Wa95}) 

Let $N\geq 2$ and $A$ be the universal unital $C^*$-algebra which is generated by the $N^2$ self-adjoint elements $u_{i,j}$ with $1\leq i,j\leq N$ satisfying the relations:
\[\sum_{k=1}^N u_{k,i}u_{k,j}=\sum_{k=1}^N u_{i,k}u_{j,k}=\delta_{i,j}~\mathrm{for~all~}1\leq i,j\leq N.\]

Also, we define a comultiplication $\Delta:A\rightarrow A\otimes_{min}A$ by $\displaystyle u_{i,j}\mapsto \sum_{k=1}^N u_{i,k}\otimes u_{k,j}$. Then $(A,\Delta)$ forms a compact quantum group called the Free orthogonal quantum group. We denote it by $O_N^+$.
\end{definition}

\begin{definition}(Free permutation quantum group \cite{Wa98})

Let $N\geq 2$ and $A$ be the universal unital $C^*$-algebra which is generated by the $N^2$ self-adjoint elements $u_{i,j}$ with $1\leq i,j\leq N$ satisfying the relations:
\[u^2_{i,j}=u_{i,j}=u^*_{i,j}\mathrm{~and~}\sum_{k=1}^N u_{i,k}=\sum_{k=1}^N u_{k,j}=1_A~\mathrm{for~all~}1\leq i,j\leq N.\]

Also, we define a comultiplication $\Delta:A\rightarrow A\otimes_{min}A$ by $\displaystyle  u_{i,j}\mapsto \sum_{k=1}^N u_{i,k}\otimes u_{k,j}$. Then $(A,\Delta)$ forms a compact quantum group called the Free permutation quantum group. We denote it by $S_N^+$.
\end{definition}

These free quantum groups are of Kac type, so that the Haar states are tracial states. Also, for all $N\geq 2$, $\mathrm{Irr}(O_N^+)$ and $\mathrm{Irr}(S_{N+2}^+)$ can be identified with $\left \{0\right\}\cup \n$. Moreover,
\[n_k=\left \{ \begin{array}{ll} k+1& \mathrm{if}~\g=O_2^+~\mathrm{or}~S_4^+\\ \frac{r_0}{2r_0-N}r_0^{k+1}+\frac{N-r_0}{2r_0-N}(N-r_0)^{k+1}& \mathrm{if}~\g= O_N^+\mathrm{~or~}S_{N+2}~\mathrm{with~}N\geq 3\end{array} \right.\]
where $r_0$ is the largest solution of the equation $X^2-NX+1=0$. Note that $n_k\approx r_0^k$ if $N\geq 3$.

\subsection{The noncommutative Marcinkiewicz interpolation theorem}

The classical Marcinkiewicz interpolation theorem has a natural non-commutative analogue for semi-finite von Neumann algebras.

\begin{theorem}(The non-commutative Marcinkiewicz interpolation theorem \cite{Xu07})\label{non-comm.Marcin.}

Let $M$ be equipped with a normal semifinite faithful trace $\phi$ and $1\leq p_1<p<p_2< \infty$. Assume that a sub-linear map $A:M\rightarrow L^1(N)$ satisfies the following: There exists $C_1,C_2>0$ such that for any $T_1\in L^{p_1}(M),T_2\in L^{p_2}(M)$ and for any $y>0$,
\begin{equation}\label{eq10}
\phi(1_{(y,\infty)}(|AT_1|))\leq (\frac{C_1}{y})^{p_1}||T_1||_{L^{p_1}(M)}^{p_1},
\end{equation}
\[\phi(1_{(y,\infty)}(|AT_2|))\leq (\frac{C_2}{y})^{p_2}||T_2||_{L^{p_2}(M)}^{p_2}.\]
Then $A:L^p(M)\rightarrow L^p(N)$ is a bounded map.

\end{theorem}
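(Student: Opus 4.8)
The plan is to transcribe the classical Marcinkiewicz argument to the semifinite setting, with the measure–space layer–cake formula replaced by its noncommutative analogue and the truncation of a function replaced by spectral truncation of $|T|$. For an operator $x$ affiliated with a semifinite von Neumann algebra with trace $\psi$, write $\lambda_y(x):=\psi(1_{(y,\infty)}(|x|))$ for its distribution function and let $\mu_t(x)$ denote its generalized singular numbers; I will use the identity $\|x\|_{L^p}^p=p\int_0^\infty y^{p-1}\lambda_y(x)\,dy$ (valid in $[0,\infty]$) together with the subadditivity $\mu_{s+t}(a+b)\le\mu_s(a)+\mu_t(b)$ for $a,b\ge0$, which gives $\lambda_{2y}(a+b)\le\lambda_y(a)+\lambda_y(b)$. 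Since $M\cap L^p(M)$ is dense in $L^p(M)$, it is enough to prove $\|AT\|_{L^p(N)}\le K\|T\|_{L^p(M)}$ for $T\in M\cap L^p(M)$ and then extend $A$ by continuity.

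First I would decompose. Fix such a $T$ with polar decomposition $T=u|T|$, and for a cut-off $c>0$ set $e_c:=1_{(c,\infty)}(|T|)$, $T^{(1)}_c:=Te_c$, $T^{(2)}_c:=T(1-e_c)$. Because $e_c$ commutes with $|T|$ one has $|T^{(1)}_c|=|T|e_c$ and $|T^{(2)}_c|=|T|(1-e_c)$, hence $\|T^{(1)}_c\|_{L^{p_1}(M)}^{p_1}=\phi(|T|^{p_1}e_c)$ and $\|T^{(2)}_c\|_{L^{p_2}(M)}^{p_2}=\phi(|T|^{p_2}(1-e_c))$; the operator inequalities $|T|^{p_1}e_c\le c^{p_1-p}|T|^{p}$ and $|T|^{p_2}(1-e_c)\le c^{p_2-p}|T|^{p}$, valid because $p_1<p<p_2$, show the two pieces lie in $M\cap L^{p_1}(M)$, resp.\ $M\cap L^{p_2}(M)$, so the hypotheses apply to them. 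Sublinearity gives $|AT|\le|AT^{(1)}_c|+|AT^{(2)}_c|$, hence $\lambda_{2y}(AT)\le\lambda_y(AT^{(1)}_c)+\lambda_y(AT^{(2)}_c)$, and then (\ref{eq10}) yields
\[
\lambda_{2y}(AT)\le\Bigl(\tfrac{C_1}{y}\Bigr)^{p_1}\phi\bigl(|T|^{p_1}e_c\bigr)+\Bigl(\tfrac{C_2}{y}\Bigr)^{p_2}\phi\bigl(|T|^{p_2}(1-e_c)\bigr)\qquad(y>0).
\]

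Next I would integrate. Insert this bound into the layer–cake formula for $\|AT\|_{L^p(N)}^p$, substitute $y\mapsto2y$, and make the $y$-dependent choice $c=c(y):=y$, so that $e_{c(y)}=1_{(y,\infty)}(|T|)$. Writing $\nu$ for the trace of the spectral measure of $|T|$ and interchanging the resulting double integrals (Tonelli), the first contribution is $p\,2^p C_1^{p_1}\int_{(0,\infty)} s^{p_1}\bigl(\int_0^s y^{p-1-p_1}\,dy\bigr)d\nu(s)=\tfrac{p\,2^p C_1^{p_1}}{p-p_1}\|T\|_{L^p(M)}^p$, the inner integral converging precisely because $p>p_1$; similarly the second contribution is $\tfrac{p\,2^p C_2^{p_2}}{p_2-p}\|T\|_{L^p(M)}^p$, the inner integral $\int_s^\infty y^{p-1-p_2}\,dy$ converging precisely because $p<p_2$. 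This gives boundedness with $K=\bigl[p\,2^p\bigl(\tfrac{C_1^{p_1}}{p-p_1}+\tfrac{C_2^{p_2}}{p_2-p}\bigr)\bigr]^{1/p}$ (taking $c(y)=\kappa y$ and optimizing in $\kappa>0$ sharpens the constant but is unnecessary).

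The genuinely delicate points are all operator–theoretic rather than computational: fixing the precise meaning of ``sub-linear'' for $\phi$-measurable operators and the distribution–function subadditivity it produces (most cleanly via $\mu_{s+t}(a+b)\le\mu_s(a)+\mu_t(b)$), the spectral-calculus identities for the $L^{p_i}$-norms of the truncations, and the density/continuous-extension step, needed because $A$ is a priori defined only on $M$. The change of variables and the Tonelli interchange are carried out exactly as in the scalar proof, and that is where essentially all the routine verification lies.
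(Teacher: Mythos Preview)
Your argument is correct and is precisely the standard noncommutative Marcinkiewicz proof the paper is alluding to via the reference to Xu's lecture notes; the paper gives no details beyond that citation. Its only additional remark is an alternative shortcut: form the direct sum $K:=M\oplus N$, extend $A$ naturally to $\widetilde{A}:K\to L^1(K)$, and invoke the single-algebra version of the theorem directly.
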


\begin{proof}

The proof of [Theorem 1.22, \cite{Xu07}] is still valid under a natural modification. Also, the direct sum $K:=M\oplus N$ and natural extension $\widetilde{A}:K\rightarrow L^1(K)$ gives another proof.
\end{proof}

If the sub-linear operator $A$ satisfies the inequality (\ref{eq10}), then we say that $A$ is of weak type $(p_1,p_1)$. Also, the boundedness of $A:L^p(M)\rightarrow l^p(N)$ implies that $A$ is of weak type $(p,p)$.

Now denote by $c(\mathrm{Irr}(\g),\nu)$ the space of all functions on the discrete space $\mathrm{Irr}(\g)$ with a positive measure $\nu$. Then the above theorem is written as follows:

\begin{corollary}\label{Mar3}

Let $\g$ be a Kac type compact quantum group and let $1\leq p_1<p<p_2<\infty$. Assume that $A:L^{\infty}(\g)\rightarrow c(\mathrm{Irr}(\g),\nu)$ is sub-linear and satisfies the following: There exists $C_1,C_2>0$ such that for any $T_1\in L^{p_1}(\g),T_2\in L^{p_2}(\g)$ and for any $y>0$,
\[\sum_{\alpha:~|(AT_1)(\alpha)|\geq y}\nu(\alpha)\leq (\frac{C_1}{y})^{p_1}||T_1||_{L^{p_1}(\g)}^{p_1},\]
\[\sum_{\alpha:~|(AT_2)(\alpha)|\geq y}\nu(\alpha)\leq (\frac{C_2}{y})^{p_2}||T_2||_{L^{p_2}(\g)}^{p_2}.\]
Then $A:L^p(\g)\rightarrow \ell^p(\mathrm{Irr}(\g),\nu)$ is a bounded map.
\end{corollary}

\section{Paley-type inequalities}
\subsection{General Approach}

In this subsection, we derive a Paley-type inequality for Kac type compact quantum group $\g$ via fundamental techniques such as Hausdorff-Young inequality, Plancherel theorem and the non-commutative Marcinkiewicz interpolation theorem.

We prove the following theorem by adapting techniques used in \cite{ANR15}.

\begin{theorem}\label{paley-CQG}
Let $\g$ be a Kac type compact quantum group and let $w:\mathrm{Irr}(\g)\rightarrow (0,\infty)$ be a function such that $C_w:=\displaystyle \sup_{t>0}\left \{t\cdot \sum_{\alpha:~w(\alpha)\geq t}n_{\alpha}^2\right\}<\infty$. Then, for each $1<p\leq 2$, there exists a universal constant $K=K(p)>0$ such that 
\begin{equation}\label{eq1}
(\sum_{\alpha\in \mathrm{Irr}(\g)}w(\alpha)^{2-p}n_{\alpha}^{2-\frac{p}{2}}||\widehat{f}(\alpha)||_{HS}^p)^{\frac{1}{p}}\leq K ||f||_{L^p(\g)}
\end{equation}
for all $f\sim \displaystyle \sum_{\alpha\in \mathrm{Irr}(\g)}n_{\alpha}\mathrm{tr}(\widehat{f}(\alpha)u^{\alpha})\in L^p(\g)$.
\end{theorem}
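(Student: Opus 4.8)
The plan is to realize the left-hand side of \eqref{eq1} as the $L^p$-norm of the image of $f$ under a sub-linear map between suitable spaces, and then apply the non-commutative Marcinkiewicz interpolation theorem in the form of Corollary \ref{Mar3}. Concretely, I would take the measure space $(\mathrm{Irr}(\g),\nu)$ with $\nu(\alpha):=n_{\alpha}^2$, and define the map $A$ on $Pol(\g)$ by
\[
(Af)(\alpha):=\frac{\|\widehat{f}(\alpha)\|_{HS}}{n_{\alpha}^{1/2}}w(\alpha),\qquad \alpha\in\mathrm{Irr}(\g).
\]
The point of this normalization is that
\[
\|Af\|_{L^p(\mathrm{Irr}(\g),\nu)}^p=\sum_{\alpha}n_{\alpha}^2\cdot\frac{\|\widehat{f}(\alpha)\|_{HS}^p}{n_{\alpha}^{p/2}}w(\alpha)^p
\]
is not quite the target, so in fact I would instead set $(Af)(\alpha):=n_{\alpha}^{-1/2}\|\widehat{f}(\alpha)\|_{HS}$ without the weight and incorporate $w(\alpha)^{2-p}$ into the measure by taking the interpolation endpoints asymmetrically; more cleanly, one defines $(Af)(\alpha)=w(\alpha)\,\|\widehat{f}(\alpha)\|_{HS}\,n_{\alpha}^{-1/2}$ only after checking that with $\nu(\alpha)=n_\alpha^2$ and the correct exponent bookkeeping the quantity $\sum_\alpha w(\alpha)^{2-p} n_\alpha^{2-p/2}\|\widehat f(\alpha)\|_{HS}^p$ emerges; I will fix the normalization so this identity holds on the nose. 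Sub-linearity of $A$ is immediate since $\|\widehat{(f+g)}(\alpha)\|_{HS}\leq \|\widehat{f}(\alpha)\|_{HS}+\|\widehat{g}(\alpha)\|_{HS}$.

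Next I would establish the two weak-type endpoint estimates required by Corollary \ref{Mar3}, at $p_1=1$ and $p_2=2$. The $L^2$-endpoint is the easy one: by the Plancherel identity $\F:L^2(\g)\to\ell^2(\widehat\g)$ is an isometry, so $\sum_\alpha n_\alpha\|\widehat f(\alpha)\|_{HS}^2=\|f\|_{L^2(\g)}^2$, which gives a strong (hence weak) type $(2,2)$ bound for $A$ provided $w$ is bounded above — and if it is not, one truncates $w$ and passes to the limit at the end, or simply notes the Marcinkiewicz argument only needs the weak inequality for the relevant level sets. The $L^1$-endpoint is where the hypothesis $C_w<\infty$ enters. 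For $f\in L^1(\g)$ the contractivity of $\F:L^1(\g)\to\ell^\infty(\widehat\g)$ gives $\|\widehat f(\alpha)\|\leq\|f\|_{L^1(\g)}$, hence $\|\widehat f(\alpha)\|_{HS}\leq n_\alpha^{1/2}\|f\|_{L^1(\g)}$ and so $|(Af)(\alpha)|\leq w(\alpha)\|f\|_{L^1(\g)}$ after the normalization. Therefore the level set $\{\alpha: |(Af)(\alpha)|\geq y\}$ is contained in $\{\alpha: w(\alpha)\geq y/\|f\|_{L^1(\g)}\}$, and
\[
\sum_{\alpha:\,|(Af)(\alpha)|\geq y}n_\alpha^2\;\leq\;\sum_{\alpha:\,w(\alpha)\geq y/\|f\|_{L^1}}n_\alpha^2\;\leq\;\frac{\|f\|_{L^1(\g)}}{y}\,C_w,
\]
using the definition of $C_w$ with $t=y/\|f\|_{L^1(\g)}$. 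This is exactly the weak type $(1,1)$ bound with constant $C_1\asymp C_w$.

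With both endpoint estimates in hand, Corollary \ref{Mar3} applied with $p_1=1<p<2=p_2$ yields boundedness of $A:L^p(\g)\to\ell^p(\mathrm{Irr}(\g),\nu)$ for every $1<p\leq 2$ (the endpoint $p=2$ being the Plancherel case, handled directly), and unwinding the definitions of $A$ and $\nu$ this boundedness is precisely the inequality \eqref{eq1} on the dense subspace $Pol(\g)$, hence on all of $L^p(\g)$ by density and Fatou. The main obstacle — really the only subtle point — is getting the exponent bookkeeping right so that the weight $w(\alpha)^{2-p}$ and the power $n_\alpha^{2-p/2}$ come out correctly from the interpolation: the exponent $2-p$ on $w$ is forced by the position of $p$ between the endpoints $1$ and $2$ (it is the "distance" $\tfrac{1/p-1/2}{1-1/2}=2-\tfrac{2}{p}$ rescaled), and the power of $n_\alpha$ is a combination of the factor $n_\alpha^{-1/2}$ built into $A$ and the measure $\nu(\alpha)=n_\alpha^2$. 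One must therefore choose the normalization of $A$ (the power of $n_\alpha$ multiplying $w(\alpha)\|\widehat f(\alpha)\|_{HS}$) so that the $L^1$-endpoint computation above still closes and the final exponents match \eqref{eq1}; this is a routine but slightly delicate calculation that mirrors the one in \cite{ANR15}.
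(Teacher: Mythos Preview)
Your overall plan is the right one and matches the paper's approach: define a sub-linear scalar-valued operator $A$ on $L^1(\g)$, get a strong $(2,2)$ bound from Plancherel, a weak $(1,1)$ bound from the hypothesis on $w$, and interpolate via Corollary~\ref{Mar3}. But the execution has a genuine gap.

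The normalization you actually need is $\nu(\alpha)=w(\alpha)^2 n_\alpha^2$ and $(Af)(\alpha)=\dfrac{\|\widehat f(\alpha)\|_{HS}}{\sqrt{n_\alpha}\,w(\alpha)}$, i.e.\ with $w$ in the \emph{denominator} of $A$ and $w^2$ in the measure. With this choice one has exactly
\[
\|Af\|_{\ell^p(\nu)}^p=\sum_\alpha w(\alpha)^{2-p}n_\alpha^{2-p/2}\|\widehat f(\alpha)\|_{HS}^p,
\qquad
\|Af\|_{\ell^2(\nu)}^2=\sum_\alpha n_\alpha\|\widehat f(\alpha)\|_{HS}^2=\|f\|_{L^2(\g)}^2,
\]
so the $(2,2)$ endpoint is an equality with constant $1$, with no need to assume or arrange that $w$ is bounded. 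Your trial choice (putting $w$ in the numerator and $\nu=n_\alpha^2$) interpolates to the exponent $w(\alpha)^p$, not $w(\alpha)^{2-p}$; since $2-p<p$ for $1<p<2$ and $w$ can be arbitrarily small, this does not imply the stated inequality. You acknowledge the bookkeeping is off, but you never land on the correct choice, and the weak-$(1,1)$ computation you wrote is tied to the wrong one.

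With the correct normalization the weak-$(1,1)$ step changes character. Now $|(Af)(\alpha)|\leq \|f\|_{L^1}/w(\alpha)$, so the relevant level set is $\{\alpha:w(\alpha)\leq \|f\|_{L^1}/y\}$, and one must bound
\[
\sum_{\alpha:\,w(\alpha)\leq s} w(\alpha)^2 n_\alpha^2 \;\lesssim\; C_w\, s.
\]
This is \emph{not} an immediate consequence of the definition of $C_w$ (which controls tails $\{w\geq t\}$, not $\{w\leq s\}$); it requires a layer-cake/Fubini argument: write $w(\alpha)^2=\int_0^{w(\alpha)^2}dx$, swap the sum and integral, substitute $x=t^2$, and use $t\sum_{w(\alpha)\geq t}n_\alpha^2\leq C_w$ inside the integral. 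This step is the one substantive idea you are missing, and it is exactly what the paper does to close the weak-$(1,1)$ bound.
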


\begin{proof}

Put $\nu(\alpha):=w(\alpha)^2n_{\alpha}^2$. We will show that the sub-linear operator $A:L^1(\g)\rightarrow c(\mathrm{Irr}(\g),\nu)$, $f\mapsto \displaystyle (\frac{||\widehat{f}(\alpha)||_{HS}}{\sqrt{n_{\alpha}}w(\alpha)})_{\alpha\in \mathrm{Irr}(\g)}$, is a well-defined bounded map from $L^p(\g)$ into $\ell^p(\mathrm{Irr}(\g),\nu)$ for all $1<p\leq 2$.

First, 
\begin{align*}
\sum_{\alpha\in \mathrm{Irr}(\g)}||(Af)(\alpha)||_{HS}^2\nu(\alpha)&=\sum_{\alpha\in \mathrm{Irr}(\g)}n_{\alpha}||\widehat{f}(\alpha)||_{HS}^2=||f||_{L^2(\g)}^2.
\end{align*}
This implies that $A$ is of (strong) type $(2,2)$ with $C_2=1$.

Second, for all $y>0$, since $\displaystyle \frac{||\widehat{f}(\alpha)||_{HS}}{\sqrt{n_{\alpha}}}=(\frac{\mathrm{tr}(\widehat{f}(\alpha)^*\widehat{f}(\alpha))}{n_{\alpha}})^{\frac{1}{2}}\leq ||\widehat{f}(\alpha)||\leq ||f||_{L^1(\g)},$
\begin{align*}
\sum_{\alpha:~||Af(\alpha)||_{HS}\geq y}\nu(\alpha)&\leq \sum_{\alpha:~w(\alpha) \leq \frac{||f||_1}{y}}w(\alpha)^2n_{\alpha}^2\\
&=\sum_{\alpha:~w(\alpha)\leq \frac{||f||_1}{y}}\int_0^{w(\alpha)^2}n_{\alpha}^2 dx\\
&=\int_0^{(\frac{||f||_1}{y})^2}[\sum_{x^{\frac{1}{2}}\leq w(\alpha)\leq \frac{||f||_1}{y}}n_{\alpha}^2]dx\mathrm{~by~the~Fubini~theorem}\\
&=2\int_0^{\frac{||f||_1}{y}}t(\sum_{\alpha:~t\leq w(\alpha)\leq \frac{||f||_1}{y}}n_{\alpha}^2)dt~\mathrm{by~substituting~}x~to~t^2\\
&\leq 2 C_w \frac{||f||_1}{y}.
\end{align*}
This says that $A$ is of weak type $(1,1)$ with $C_1=2 C_w$.

Now, by Corollary \ref{Mar3}, 
\[[\sum_{\alpha\in \mathrm{Irr}(\g)}w(\alpha)^{2-p}n_{\alpha}^{p(\frac{2}{p}-\frac{1}{2})}||\widehat{f}(\alpha)||_{HS}^p]^{\frac{1}{p}}\lesssim ||f||_{L^p(\g)}.\]

\end{proof}

The left hand side of the inequality (\ref{eq1}) can be reduced to a more familiar form. Recall that the natural non-commutative $\ell^p$-norm on $\ell^{\infty}(\widehat{\g})=\ell^{\infty}-\oplus_{\alpha\in \mathrm{Irr}(\g)} M_{n_{\alpha}}$ is given by
\[||(A_{\alpha})_{\alpha\in \mathrm{Irr}(\g)}||_{\ell^p(\widehat{\mathbb{G}})}=(\sum_{\alpha\in \mathrm{Irr}(\g)}n_{\alpha}||A_{\alpha}||_{S^p_{n_{\alpha}}}^p)^{\frac{1}{p}}\]
under the condition that $\g$ is of Kac type.

\begin{corollary}\label{paley-CQG2}

Let $1<p\leq 2$ and $w$ be a function satisfying the condition of Theorem \ref{paley-CQG}. Then we have that
\[(\sum_{\alpha\in \mathrm{Irr}(\g)}w(\alpha)^{2-p}n_{\alpha}||\widehat{f}(\alpha)||^p_{S^p_{n_{\alpha}}}))^{\frac{1}{p}}\leq K ||f||_{L^p(\g)}\]
for all $f\sim \displaystyle \sum_{\alpha\in \mathrm{Irr}(\g)}n_{\alpha}\mathrm{tr}(\widehat{f}(\alpha)u^{\alpha})\in L^p(\g)$.
\end{corollary}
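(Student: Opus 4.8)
The plan is to deduce Corollary \ref{paley-CQG2} from Theorem \ref{paley-CQG} by a purely pointwise comparison of the two $\ell^p$-type quantities on each block $M_{n_\alpha}$. Recall that for a matrix $A\in M_n$ one has the elementary Schatten inequalities
\[
\|A\|_{S^p_n}\le \|A\|_{S^2_n}=\|A\|_{HS}\quad\text{for }2\le p,\qquad
\|A\|_{S^p_n}\le n^{\frac1p-\frac12}\|A\|_{HS}\quad\text{for }1\le p\le 2,
\]
the latter being H\"older's inequality applied to the singular values (or interpolation between $S^1_n$ and $S^2_n$). Since here $1<p\le 2$, the relevant bound is the second one, giving $\|\widehat f(\alpha)\|_{S^p_{n_\alpha}}^p\le n_\alpha^{1-\frac p2}\|\widehat f(\alpha)\|_{HS}^p$ for every $\alpha\in\mathrm{Irr}(\g)$.

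First I would multiply this pointwise inequality by the weight $w(\alpha)^{2-p}n_\alpha$ and sum over $\alpha\in\mathrm{Irr}(\g)$:
\[
\sum_{\alpha\in\mathrm{Irr}(\g)}w(\alpha)^{2-p}n_\alpha\|\widehat f(\alpha)\|_{S^p_{n_\alpha}}^p
\le \sum_{\alpha\in\mathrm{Irr}(\g)}w(\alpha)^{2-p}\,n_\alpha\cdot n_\alpha^{1-\frac p2}\|\widehat f(\alpha)\|_{HS}^p
=\sum_{\alpha\in\mathrm{Irr}(\g)}w(\alpha)^{2-p}n_\alpha^{2-\frac p2}\|\widehat f(\alpha)\|_{HS}^p.
\]
Taking $p$-th roots, the left-hand side is exactly $\|(\,\cdot\,)\|$ appearing in the corollary and the right-hand side is the quantity bounded by $K\|f\|_{L^p(\g)}$ in Theorem \ref{paley-CQG}. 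This yields the claim with the same constant $K=K(p)$, and one should note that the exponent on $n_\alpha$ matches: in the proof of Theorem \ref{paley-CQG} the exponent $2-\frac p2$ was written as $p\bigl(\frac2p-\frac12\bigr)$, which is the same number.

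I do not expect any genuine obstacle here; the only point requiring a word of care is the direction of the Schatten comparison. For $1<p\le 2$ the embedding $S^2_n\hookrightarrow S^p_n$ is the bounded one (not the reverse), with norm $n^{1/p-1/2}$, and it is precisely this factor that converts the $HS$-weighting $n_\alpha^{2-p/2}$ into the homogeneous weighting $n_\alpha$ in the noncommutative $\ell^p(\widehat\g)$-norm. For completeness one might also record that the reverse inequality $n^{-(1/p-1/2)}\|A\|_{HS}\le\|A\|_{S^p_n}$ shows the two sides are in fact comparable up to the same dimensional factor, so no information is lost for fixed-dimensional blocks; but only the stated direction is needed for the corollary.
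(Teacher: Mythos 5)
Your proof is correct and is essentially the paper's own argument: both reduce the corollary to the pointwise Schatten comparison $\|\widehat f(\alpha)\|_{S^p_{n_\alpha}}^p\le n_\alpha^{1-\frac p2}\|\widehat f(\alpha)\|_{HS}^p$ (the paper obtains it by H\"older against $\mathrm{Id}_{n_\alpha}$ in $S^r_{n_\alpha}$ with $\frac1r=\frac1p-\frac12$, you by H\"older on the singular values, which is the same estimate) and then sum against the weight $w(\alpha)^{2-p}n_\alpha$ to land on the left-hand side of Theorem \ref{paley-CQG}. No gaps.
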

\begin{proof}

First,
\[\mathrm{tr}(|\widehat{f}(\alpha)|^p)=||\widehat{f}(\alpha)||_{S^p_{n_{\alpha}}}^p.\]

Put $\displaystyle \frac{1}{r}=\frac{1}{p}-\frac{1}{2}$. Then $2<r\leq \infty$ and
\[\mathrm{tr}(|\widehat{f}(\alpha)|^p)\leq ||\widehat{f}(\alpha)||_{HS}^{p}||Id_{n_{\alpha}}||_{S^r_{n_{\alpha}}}^{p}=n_{\alpha}^{1-\frac{p}{2}}||\widehat{f}(\alpha)||_{HS}^p.\]

This completes proof easily.
\end{proof}

Now we discuss an important subclass of compact quantum groups, namely compact matrix quantum groups which allows the natural length function on $\mathrm{Irr}(\g)$.

\begin{definition}\label{CMQG1}

A compact matrix quantum group is given by a pair $(A,\Delta,u)$ with a unital $C^*$-algebra $A$, a $*$-homomorphism $\Delta:A\rightarrow A\otimes_{min}A$ and a unitary $u=(u_{i,j})_{1\leq i,j\leq n}\in M_n(A)$ such that (1) $\Delta:u_{i,j}\mapsto \displaystyle \sum_{k=1}^n u_{i,k}\otimes u_{k,j}$, (2) $\overline{u}=(u_{i,j}^*)_{1\leq i,j\leq n}$ is invertible in $M_n(A)$ and (3) $\left \{u_{i,j}\right\}_{1\leq i,j\leq n}$ generates $A$ as a $C^*$-algebra.
\end{definition}

By definition, free orthogonal quantum groups $O_N^+$ and free permutation quantum groups $S_N^+$ are compact matrix quantum groups. Also, in the class of compact quantum groups, the subclass of compact matrix quantum groups is characterized by the following proposition. The conjugate $\overline{\alpha}\in Irr(\g)$ of $\alpha\in Irr(\g)$ is determined by $u^{\overline{\alpha}}:=\displaystyle Q_{\alpha}^{\frac{1}{2}}\overline{u^{\alpha}}Q_{\alpha}^{-\frac{1}{2}}=((Q_{\alpha})^{\frac{1}{2}}_{i,i}(u^{\alpha}_{i,j})^*(Q_{\alpha})_{j,j}^{-\frac{1}{2}})_{1\leq i,j\leq n_{\alpha}}$.

\begin{proposition}\label{CMQG2}(\cite{Ti08})

A compact quantum group is a compact matrix quantum group if and only if there exists a finite set $S:=\left \{\alpha_1,\cdots,\alpha_n\right\}\subseteq \mathrm{Irr}(\g)$ such that any $\alpha\in \mathrm{Irr}(\g)$ is contained in some iterated tensor product of elements $\alpha_1,\overline{\alpha_1},\cdots, \alpha_n,\overline{\alpha_n}$ and the trivial corepresentation.
\end{proposition}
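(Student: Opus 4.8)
The plan is to reduce both implications to a single structural fact about the $*$-subalgebra of $A$ generated by the entries of a unitary corepresentation. Write $\mathcal{C}_\alpha:=\mathrm{span}\{u^\alpha_{i,j}:1\le i,j\le n_\alpha\}$ for the coefficient space of $\alpha\in\mathrm{Irr}(\g)$, and recall from Peter--Weyl theory that $\mathrm{Pol}(\g)=\bigoplus_{\alpha\in\mathrm{Irr}(\g)}\mathcal{C}_\alpha$ with the summands mutually orthogonal in $L^2(\g)$ and $\mathrm{Pol}(\g)$ dense in $A$. I will use two routine facts about corepresentations: the matrix coefficients of $\beta\otimes\gamma$ are the products $u^\beta_{i,j}u^\gamma_{k,l}$, so if $\mathcal{C}_\beta,\mathcal{C}_\gamma$ lie in a $*$-subalgebra then so does $\mathcal{C}_{\beta\otimes\gamma}$; and if $\alpha$ is a subobject of $\beta$ then an isometric intertwiner $V$ gives $u^\alpha=(V^*\otimes 1)u^\beta(V\otimes 1)$, hence $\mathcal{C}_\alpha\subseteq\mathcal{C}_\beta$. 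For a finite set $S=\{\alpha_1,\dots,\alpha_n\}\subseteq\mathrm{Irr}(\g)$, let $T(S)$ denote the set of irreducibles contained in some iterated tensor product of $\alpha_1,\overline{\alpha_1},\dots,\alpha_n,\overline{\alpha_n}$ and the trivial corepresentation.

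The key step is the identity: if $v$ is a finite dimensional unitary corepresentation whose irreducible components constitute $S$, and $\mathcal{B}$ is the $*$-algebra generated by the entries $v_{i,j}$, then
\[\mathcal{B}=\bigoplus_{\alpha\in T(S)}\mathcal{C}_\alpha.\]
For $\subseteq$, observe that $\mathcal{B}$ is spanned by $1$ (a coefficient of the trivial corepresentation) and products of the $v_{i,j}$ and the $v_{i,j}^*$; since $(v_{i,j}^*)$ is the matrix of $\overline v$, whose components lie in $\overline S$, every such product is a coefficient of an iterated tensor product of copies of $v$ and $\overline v$, hence lies in $\bigoplus_{\alpha\in T(S)}\mathcal{C}_\alpha$ after decomposing into irreducibles. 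For $\supseteq$, given $\alpha\in T(S)$ pick an iterated tensor product $\beta$ of elements of $S\cup\overline S$ and the trivial corepresentation containing $\alpha$; then $\mathcal{C}_\alpha\subseteq\mathcal{C}_\beta\subseteq\mathcal{B}$ by the product fact applied iteratively.

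Granting this identity, the forward direction follows: if $\g$ is a compact matrix quantum group with fundamental corepresentation $u$ and $S$ is the set of its irreducible components, condition (3) of Definition~\ref{CMQG1} makes $\mathcal{B}$ norm-dense, hence $L^2$-dense (since $\|x\|_{L^2(\g)}\le\|x\|_A$), in $A$; orthogonality of the $\mathcal{C}_\alpha$ in $L^2(\g)$ then forces $T(S)=\mathrm{Irr}(\g)$. For the converse, given $S$ with $T(S)=\mathrm{Irr}(\g)$, take $v:=\bigoplus_{k=1}^n\bigl(u^{\alpha_k}\oplus u^{\overline{\alpha_k}}\bigr)$, a block-diagonal unitary in $M_N(A)$: it is a corepresentation (condition (1)); its conjugate $\overline v=(v_{i,j}^*)$ is block-diagonal with blocks $Q_{\alpha_k}^{-1/2}u^{\overline{\alpha_k}}Q_{\alpha_k}^{1/2}$ and $\overline{u^{\overline{\alpha_k}}}$, each invertible, so $\overline v$ is invertible (condition (2)); and since the components of $v$ contain $S$, the identity above gives that the $*$-algebra generated by the $v_{i,j}$ contains $\bigoplus_\alpha\mathcal{C}_\alpha=\mathrm{Pol}(\g)$, which is dense in $A$, so $\{v_{i,j}\}$ generates $A$ as a $C^*$-algebra (condition (3)). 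Thus $(A,\Delta,v)$ is a compact matrix quantum group.

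The main obstacle I anticipate is the displayed identity: making precise that the $*$-algebra generated by the entries of $v$ automatically absorbs all coefficients of $\overline v$ and of every subobject of an iterated tensor power of $v\oplus\overline v$, so that it is exactly the isotypic sum over $T(S)$. Once this bookkeeping is settled, both implications are immediate consequences of Peter--Weyl orthogonality and the density of $\mathrm{Pol}(\g)$ in $A$.
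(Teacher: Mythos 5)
The paper gives no proof of this proposition --- it is quoted from \cite{Ti08} --- so there is nothing to compare against line by line; your argument is correct and is essentially the standard one from that reference. The two inclusions of your key identity, combined with Schur orthogonality of the coefficient spaces (recorded in the paper via the formulas for $h((u^{\beta}_{s,t})^*u^{\alpha}_{i,j})$), the invertibility of each block of $\overline{v}$ via similarity to a unitary, and norm-density of the generated $*$-algebra, are exactly the ingredients of the textbook proof.
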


Then there is a natural way to define a length function on $\mathrm{Irr}(\g)$ (\cite{Ver07}). For non-trivial $\alpha\in \mathrm{Irr}(\g)$, the natural length $|\alpha|$ is defined by 
\[\min \left \{m\in \left \{0\right\}\cup \n:\exists \beta_1,\cdots,\beta_m\mathrm{~such~that~}\alpha\subseteq \beta_1\otimes \cdots\otimes \beta_m,~\beta_j\in \left \{\alpha_k,\overline{\alpha_k}\right\}_{k=1}^n\right\}.\]
The length of the trivial corepresentation is defined by $0$. 

Then we can extract explicit inequalities from Theorem \ref{paley-CQG} and Corollary \ref{paley-CQG2} by inserting geometric information of underlying quantum group, namely growth rate that is estimated by the quantities $b_k:=\displaystyle \sum_{|\alpha|\leq k}n_{\alpha}^2$ \cite{BV09}.

\begin{corollary}\label{poly1}
Let a Kac type compact matrix quantum group $\g$ satisfy 
\[ b_k=\sum_{\alpha\in \mathrm{Irr}(\g):|\alpha|\leq k}n_{\alpha}^2\leq C(1+k)^{\gamma}\mathrm{~for~all~}k\geq 0\mathrm{~with~}C,\gamma>0\]
with respect to the natural length function. Then, for each $1<p\leq 2$, there exists a universal constant $K=K(p)$ such that
\begin{align}
&(\sum_{\alpha\in \mathrm{Irr}(\g)}\frac{1}{(1+|\alpha|)^{(2-p)\gamma}}n_{\alpha}||\widehat{f}(\alpha)||_{S^p_{n_{\alpha}}}^p)^{\frac{1}{p}}\\
\notag \leq& (\sum_{\alpha\in \mathrm{Irr}(\g)} \frac{1}{(1+|\alpha|)^{(2-p)\gamma}}n_{\alpha}^{2-\frac{p}{2}}||\widehat{f}(\alpha)||_{HS}^p)^{\frac{1}{p}}\leq K||f||_{L^p(\g)}
\end{align}
for all $f\sim \displaystyle \sum_{\alpha\in \mathrm{Irr}(\g)}n_{\alpha}\mathrm{tr}(\widehat{f}(\alpha)u^{\alpha})\in L^p(\g)$.
\end{corollary}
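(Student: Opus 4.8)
The plan is to derive Corollary \ref{poly1} by applying Theorem \ref{paley-CQG} to a carefully chosen weight function built from the length function $|\cdot|$ on $\mathrm{Irr}(\g)$. The natural candidate is $w(\alpha):=\dfrac{1}{(1+|\alpha|)^{\gamma}}$, so that $w(\alpha)^{2-p}=\dfrac{1}{(1+|\alpha|)^{(2-p)\gamma}}$ matches exactly the weight appearing in the claimed inequality. With this choice, the second inequality in the statement is literally the conclusion of Theorem \ref{paley-CQG}, and the first inequality is then immediate from Corollary \ref{paley-CQG2} (or equivalently from the elementary matrix inequality $\mathrm{tr}(|\widehat{f}(\alpha)|^p)\le n_{\alpha}^{1-p/2}\|\widehat{f}(\alpha)\|_{HS}^p$ used in its proof). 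So the entire content of the corollary reduces to one point: verifying that this $w$ satisfies the hypothesis $C_w=\sup_{t>0}\bigl\{t\cdot\sum_{\alpha:\,w(\alpha)\ge t}n_{\alpha}^2\bigr\}<\infty$.

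The key step, then, is the estimate on $C_w$. First I would note that $w(\alpha)\ge t$ is equivalent to $(1+|\alpha|)^{\gamma}\le 1/t$, i.e. $|\alpha|\le t^{-1/\gamma}-1$; writing $k_t:=\lfloor t^{-1/\gamma}-1\rfloor$ (and treating $t\ge 1$ trivially, since then the sum is at most $n_{\text{triv}}^2=1$), the sum $\sum_{\alpha:\,w(\alpha)\ge t}n_{\alpha}^2$ is exactly $b_{k_t}=\sum_{|\alpha|\le k_t}n_{\alpha}^2$. By the polynomial growth hypothesis this is bounded by $C(1+k_t)^{\gamma}\le C\,(t^{-1/\gamma})^{\gamma}=C/t$. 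Multiplying by $t$ gives $t\cdot\sum_{\alpha:\,w(\alpha)\ge t}n_{\alpha}^2\le C$ uniformly in $t$, hence $C_w\le \max(C,1)<\infty$. This is the only genuine computation, and it is short.

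With $C_w<\infty$ established, Theorem \ref{paley-CQG} applied to this $w$ yields, for each $1<p\le2$, a universal constant $K=K(p)$ with
\[
\Bigl(\sum_{\alpha\in\mathrm{Irr}(\g)}\frac{1}{(1+|\alpha|)^{(2-p)\gamma}}\,n_{\alpha}^{2-\frac{p}{2}}\,\|\widehat{f}(\alpha)\|_{HS}^p\Bigr)^{\frac1p}\le K\|f\|_{L^p(\g)},
\]
which is the second displayed inequality. For the first inequality, apply the bound $\|\widehat{f}(\alpha)\|_{S^p_{n_{\alpha}}}^p=\mathrm{tr}(|\widehat{f}(\alpha)|^p)\le n_{\alpha}^{1-p/2}\|\widehat{f}(\alpha)\|_{HS}^p$ termwise (this is Hölder for Schatten norms with $1/r=1/p-1/2$, exactly as in the proof of Corollary \ref{paley-CQG2}), multiply by the weight $n_{\alpha}(1+|\alpha|)^{-(2-p)\gamma}$ and sum: the left side becomes $\le$ the middle quantity, completing the chain.

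I do not expect any real obstacle here; the corollary is essentially a specialization of Theorem \ref{paley-CQG}, and the ``hard part'' is merely recognizing that the weight $w(\alpha)=(1+|\alpha|)^{-\gamma}$ converts the growth bound $b_k\le C(1+k)^{\gamma}$ into precisely the level-set condition $C_w<\infty$. The one subtlety worth stating carefully is the relation $\{\alpha:w(\alpha)\ge t\}=\{\alpha:|\alpha|\le k_t\}$ together with the edge cases $t\ge1$ (empty-ish sum) and ensuring $k_t\ge0$ so that $b_{k_t}$ is meaningful; handling $t$ in the range where $t^{-1/\gamma}-1<0$ separately (there the sum over $w(\alpha)\ge t$ is empty, contributing $0$) keeps the argument clean.
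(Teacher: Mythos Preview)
Your proof is correct and matches the paper's argument essentially line for line: the paper also takes $w(\alpha)=(1+|\alpha|)^{-\gamma}$, verifies $C_w\le C$ via $t\cdot b_{\lfloor t^{-1/\gamma}-1\rfloor}\le C$ for $0<t\le 1$, and then appeals to Theorem \ref{paley-CQG} and Corollary \ref{paley-CQG2}. Your handling of the edge cases $t\ge 1$ is, if anything, slightly more explicit than the paper's.
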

\begin{proof}

Consider a weight function $w(\alpha):=\displaystyle \frac{1}{(1+|\alpha|)^{\gamma}}$. Then
\[\sup_{t>0}\left \{ t\cdot \sum_{\alpha:|\alpha|\leq t^{-\frac{1}{\gamma}}-1}n_{\alpha}^2 \right\}=\sup_{0<t\leq 1}\left \{ t\cdot \sum_{\alpha:|\alpha|\leq t^{-\frac{1}{\gamma}}-1}n_{\alpha}^2 \right\}\leq C \sup_{0<t\leq 1}t\cdot  (t^{-\frac{1}{\gamma}})^{\gamma}= C.\]

Now the conclusion is obtained by Theorem \ref{paley-CQG} and Proposition \ref{paley-CQG2}.

\end{proof}

\subsection{A paley-type inequality under the rapid decay property}

In this subsection, we still assume that $\g$ is a compact matrix quantum group of Kac type. One of the main observations of this paper is that the more detailed geometric information actually improves Theorem \ref{paley-CQG} and Corollary \ref{paley-CQG2} in various ``exponentially growing'' cases. A more refined paley-type inequality can be obtained under the condition that $\g$ has the rapid decay property in the sense of \cite{Ver07}.

\begin{definition}(\cite{Ver07})

Let $\g$ be a Kac type compact matrix quantum group. Then we say that $\g$ has the rapid decay property with respect to the natural length function on $\mathrm{Irr}(\mathbb{G})$ if there exist $C,\beta>0$ such that
\begin{equation}\label{RD}
||\sum_{\alpha\in \mathrm{Irr}(\g):|\alpha|=k}\sum_{i,j=1}^{n_{\alpha}}a^{\alpha}_{i,j}u^{\alpha}_{i,j}||_{L^{\infty}(\g)}\leq C(1+k)^{\beta} ||\sum_{\alpha\in \mathrm{Irr}(\g):|\alpha|=k}\sum_{i,j=1}^{n_{\alpha}}a^{\alpha}_{i,j}u^{\alpha}_{i,j}||_{L^2(\g)}
\end{equation}
for any $k\geq 0$ and scalars $a^{\alpha}_{i,j}\in \Comp$.
\end{definition}

\begin{notation}
\begin{enumerate}
\item When the natural length function on $\mathrm{Irr}(\g)$ is given, we will use the notations $S_k:=\left \{\alpha\in \mathrm{Irr}(\g):|\alpha|=k\right\}$ and $s_k:=\sum_{\alpha\in S_k}n_{\alpha}^2$.
\item We denote by $p_k$ the orthogonal projection from $L^2(\g)$ to the clousre of $\mathrm{span}(\left \{u^{\alpha}_{i,j}:~\alpha\in S_k,~1\leq i,j\leq n_{\alpha}\right\})$.
\end{enumerate}
\end{notation}

\begin{proposition}\label{prop.rd.}

Suppose that a Kac type compact matrix quantum group $\g$ has the rapid decay property with respect to the natural length function on $\mathrm{Irr}(\mathbb{G})$ and with inequality $(\ref{RD})$. Then we have that
\begin{equation}\label{Tool1}
\sup_{k\geq 0}\frac{(\sum_{\alpha\in \mathrm{Irr}(\g):|\alpha|=k}n_{\alpha}||\widehat{f}(\alpha)||_{HS}^2)^{\frac{1}{2}}}{(k+1)^{\beta}}\leq C ||f||_{L^1(\g)}~\mathrm{for~all~}f\in L^1(\g).
\end{equation}
\end{proposition}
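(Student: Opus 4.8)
The plan is to read $(\ref{Tool1})$ as a soft dual reformulation of the rapid decay estimate $(\ref{RD})$: it will be enough to apply $(\ref{RD})$ to the single polynomial $g:=p_kf$ and then pair $g$ with $f$ through the (tracial) Haar state, using noncommutative H\"older.

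First I would record two consequences of the orthogonality relations. Since $\F$ is an isometry from $L^2(\g)$ onto $\ell^2(\widehat{\g})$ and $p_k$ is precisely the Fourier multiplier restricting the support to $S_k$, one has $\|p_kf\|_{L^2(\g)}^2=\sum_{\alpha\in \mathrm{Irr}(\g):|\alpha|=k}n_{\alpha}\|\widehat f(\alpha)\|_{HS}^2$; moreover the subspaces $H_j:=\mathrm{span}(\{u^{\alpha}_{s,t}:\alpha\in S_j,\,1\leq s,t\leq n_{\alpha}\})$ are mutually orthogonal in $L^2(\g)$, so $p_k$ is the orthogonal projection onto the \emph{finite-dimensional} subspace $H_k\subseteq Pol(\g)\subseteq L^{\infty}(\g)$. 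In particular $p_kf\in L^{\infty}(\g)$, so that $(\ref{RD})$ may be applied to $p_kf$, and a direct computation with the orthogonality relations (using that $h$ is tracial and that $H_k\perp H_j$ for $j\neq k$) gives $\|p_kf\|_{L^2(\g)}^2=h(f^*\,p_kf)$ for every $f\in Pol(\g)$.

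Now for $f\in Pol(\g)$ I would estimate, by the noncommutative H\"older inequality and then by $(\ref{RD})$ applied to $g=p_kf\in H_k$,
\[
\|p_kf\|_{L^2(\g)}^2=|h(f^*\,p_kf)|\leq\|f^*\|_{L^1(\g)}\,\|p_kf\|_{L^{\infty}(\g)}\leq C(1+k)^{\beta}\,\|f\|_{L^1(\g)}\,\|p_kf\|_{L^2(\g)},
\]
whence $\|p_kf\|_{L^2(\g)}\leq C(1+k)^{\beta}\|f\|_{L^1(\g)}$ (the case $p_kf=0$ being trivial). Finally I would remove the restriction to polynomials by density of $Pol(\g)$ in $L^1(\g)$: if $f_n\to f$ in $L^1(\g)$ with $f_n\in Pol(\g)$, then $(p_kf_n)_n$ is Cauchy in $L^2(\g)$ by the bound just proved, while $\widehat{f_n}(\alpha)\to\widehat f(\alpha)$ entrywise for each $\alpha$ since $\F:L^1(\g)\to\ell^{\infty}(\widehat{\g})$ is contractive; hence $p_kf_n\to p_kf$ in $L^2(\g)$ and the inequality passes to the limit. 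Dividing by $(1+k)^{\beta}$ and taking the supremum over $k\geq 0$ then yields $(\ref{Tool1})$.

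There is no genuine analytic difficulty here: the heart of the matter is just duality together with H\"older. The only delicate points are the bookkeeping ones, namely verifying the identity $\|p_kf\|_{L^2(\g)}^2=h(f^*p_kf)$ from the orthogonality relations and ensuring that $p_kf$ really defines an element of $L^2(\g)$ when $f$ lies only in $L^1(\g)$ — which is exactly why one works over $Pol(\g)$ first and uses finiteness of $S_k$ (i.e. $s_k<\infty$), valid for compact matrix quantum groups by Proposition~\ref{CMQG2}.
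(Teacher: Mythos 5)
Your argument is correct and is essentially the paper's own proof in disguise: the paper bounds $\|f\|_{L^1(\g)}$ from below by pairing $f$ against polynomials $x$ in an RD-weighted ball and then computes the resulting $\ell^2(S_k)$ dual norm, which amounts precisely to your H\"older estimate $\|p_kf\|_{L^2(\g)}^2=|h(f^*p_kf)|\le\|f\|_{L^1(\g)}\|p_kf\|_{L^\infty(\g)}$ combined with the rapid decay bound applied to $p_kf$. The only cosmetic difference is that you pass from $Pol(\g)$ to $L^1(\g)$ by a density argument, whereas the paper works directly with the duality $L^1(\g)\hookrightarrow C_r(\g)^*$; both are fine.
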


\begin{proof}
Since $L^1(\g)$ is isometrically embedded into the dual space $M(\g):=C_r(\g)^*$ and $Pol(\g)$ is dense in $C_r(\g)$, we have that
\begin{align}
\notag ||f||_{L^1(\g)}&=\sup_{x\in Pol(\g):||x||_{L^{\infty}(\g)}\leq 1}\langle f,x\rangle_{L^1(\g),L^{\infty}(\g)}\\
\notag&=\sup_{x\in Pol(\g):||x||_{L^{\infty}(\g)}\leq 1}\langle f,x^*\rangle_{L^1(\g),L^{\infty}(\g)}\\
\notag&=\sup_{x\in Pol(\g):||x||_{L^{\infty}(\g)}\leq 1}\sum_{\alpha\in \mathrm{Irr}(\g)}n_{\alpha}\mathrm{tr}(\widehat{f}(\alpha)\widehat{x}(\alpha)^*)\\
\label{tool2}&\geq \sup_{x\in Pol(\g):\sum_{k\geq 0}C(k+1)^{\beta}(\sum_{\alpha:|\alpha|=k}n_{\alpha}||\widehat{x}(\alpha)||_{HS}^2)^{\frac{1}{2}}\leq 1}\sum_{\alpha\in \mathrm{Irr}(\g)}n_{\alpha}\mathrm{tr}(\widehat{f}(\alpha)\widehat{x}(\alpha)^*)\\
\notag&\geq \sup_{k\geq 0}\sup_{(\sum_{\alpha:|\alpha|=k}n_{\alpha}||\widehat{x}(\alpha)||_{HS}^2)^{\frac{1}{2}}\leq 1}\sum_{\alpha\in \mathrm{Irr}(\g):|\alpha|=k}\frac{n_{\alpha}}{C(k+1)^{\beta}}\mathrm{tr}(\widehat{f}(\alpha)\widehat{x}(\alpha)^*)\\
\notag&=\sup_{k\geq 0}\frac{(\sum_{\alpha\in \mathrm{Irr}(\g):|\alpha|=k}n_{\alpha}||\widehat{f}(\alpha)||_{HS}^2)^{\frac{1}{2}}}{C(k+1)^{\beta}}.
\end{align}
\end{proof}

\begin{theorem}\label{paley-CQG-rd}

Let a Kac type compact matrix quantum group $\g$ have the rapid decay property with respect to the natural length function on $\mathrm{Irr}(\g)$ and with inequality (\ref{RD}). Also, suppose that a weight function $w:\left \{0\right\}\cup \n\rightarrow (0,\infty)$ satisfies
\begin{equation}
C_w:=\sup_{y>0}\left \{y\cdot \sum_{k\geq 0:\frac{(k+1)^{\beta}}{w(k)}\leq \frac{1}{y}}(k+1)^{2\beta} \right\}<\infty.
\end{equation}
Then, for each $1<p\leq 2$, there exists a universal constant $K=K(p)>0$ such that
\begin{equation}
(\sum_{k\geq 0}w(k)^{2-p}(\sum_{\alpha\in \mathrm{Irr}(\g):|\alpha|=k}n_{\alpha}||\widehat{f}(\alpha)||_{HS}^2)^\frac{p}{2})^{\frac{1}{p}}\leq K ||f||_{L^p(\g)}
\end{equation}
for all $f\sim \displaystyle \sum_{\alpha\in \mathrm{Irr}(\g)}n_{\alpha}\mathrm{tr}(\widehat{f}(\alpha)u^{\alpha}) \in L^p(\g)$.
\end{theorem}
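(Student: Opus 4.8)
The plan is to run the same Marcinkiewicz interpolation machinery as in Theorem \ref{paley-CQG}, but with the ``atoms'' being the length-layers $S_k$ rather than individual irreducibles $\alpha$. Concretely, I would work on the discrete measure space $(\{0\}\cup\n,\nu)$ with $\nu(k):=w(k)^2(k+1)^{2\beta}$ (this matches the exponent $2-p$ appearing in the conclusion after interpolation, exactly as $\nu(\alpha)=w(\alpha)^2n_\alpha^2$ did before). Define the sub-linear operator
\[
A:L^1(\g)\to c(\{0\}\cup\n,\nu),\qquad
Af:=\left(\frac{\bigl(\sum_{\alpha\in S_k}n_\alpha\|\widehat f(\alpha)\|_{HS}^2\bigr)^{1/2}}{w(k)(k+1)^{\beta}}\right)_{k\geq 0}.
\]
The target inequality is then precisely the assertion that $A$ maps $L^p(\g)$ boundedly into $\ell^p(\{0\}\cup\n,\nu)$, since $\nu(k)\cdot |Af(k)|^p = w(k)^{2-p}(\sum_{\alpha\in S_k}n_\alpha\|\widehat f(\alpha)\|_{HS}^2)^{p/2}$. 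So it suffices to verify the two endpoint estimates feeding Corollary \ref{Mar3}.

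For the strong $(2,2)$ bound: by the Plancherel identity $\|f\|_{L^2(\g)}^2=\sum_\alpha n_\alpha\|\widehat f(\alpha)\|_{HS}^2$, one computes
\[
\sum_{k\geq 0}|Af(k)|^2\,\nu(k)=\sum_{k\geq 0}\sum_{\alpha\in S_k}n_\alpha\|\widehat f(\alpha)\|_{HS}^2=\|f\|_{L^2(\g)}^2,
\]
so $A$ is of strong type $(2,2)$ with constant $1$. For the weak $(1,1)$ bound: Proposition \ref{prop.rd.} gives exactly the pointwise estimate $|Af(k)|=\dfrac{(\sum_{\alpha\in S_k}n_\alpha\|\widehat f(\alpha)\|_{HS}^2)^{1/2}}{w(k)(k+1)^{\beta}}\leq \dfrac{C\|f\|_{L^1(\g)}}{w(k)}$. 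Hence for any $y>0$, if $|Af(k)|\geq y$ then $w(k)\leq C\|f\|_1/y$, i.e. $\frac{(k+1)^{\beta}}{w(k)}\geq \frac{(k+1)^{\beta}y}{C\|f\|_1}\cdot\frac{1}{(k+1)^{\beta}}$ — more cleanly, the index set is contained in $\{k:\ (k+1)^{\beta}/w(k)\text{ small}\}$, and I would mimic the Fubini/substitution computation in the proof of Theorem \ref{paley-CQG}:
\begin{align*}
\sum_{k:\,|Af(k)|\geq y}\nu(k)
&\leq \sum_{k:\,w(k)\leq \frac{C\|f\|_1}{y}} w(k)^2(k+1)^{2\beta}
= \sum_{k:\,w(k)\leq \frac{C\|f\|_1}{y}}\int_0^{w(k)^2}(k+1)^{2\beta}\,dx\\
&= 2\int_0^{C\|f\|_1/y} t\Bigl(\sum_{k:\,t\leq w(k)\leq C\|f\|_1/y}(k+1)^{2\beta}\Bigr)dt
\leq 2C_w\,\frac{C\|f\|_1}{y}.
\end{align*}
Here I must be a little careful: the hypothesis on $C_w$ is phrased via $(k+1)^\beta/w(k)\leq 1/y$ rather than $w(k)\leq (\text{const})/y$, so I would first rewrite the weak-type threshold in the form used in the definition of $C_w$. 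Since $|Af(k)|\ge y$ forces $(k+1)^\beta/w(k)\le C\|f\|_1/y$, the relevant sum is $\sum_{k:\,(k+1)^\beta/w(k)\le C\|f\|_1/y}(k+1)^{2\beta}$, and one needs to compare $\nu(k)=w(k)^2(k+1)^{2\beta}$ against $(k+1)^{2\beta}$ on that set; writing $u(k):=(k+1)^\beta/w(k)$ so that $\nu(k)=(k+1)^{4\beta}/u(k)^2$ and redoing the layer-cake integral in the variable $u(k)^{-1}$ is the clean way to land on $C_1=2C\,C_w$.

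With both endpoint estimates in hand, Corollary \ref{Mar3} applied with $p_1=1$, $p_2=2$ immediately yields boundedness of $A:L^p(\g)\to\ell^p(\{0\}\cup\n,\nu)$ for every $1<p<2$ (and $p=2$ is the trivial endpoint), which is the claimed inequality with $K=K(p)$ depending only on $p$ (through the interpolation constant) and on $C$, $C_w$. The main obstacle, and the only genuinely non-routine point, is matching the $C_w$-hypothesis as literally stated to the quantity that actually arises from the weak-type computation — i.e. choosing the right change of variable ($x\mapsto t^2$ combined with the substitution $w(k)\leftrightarrow (k+1)^\beta/w(k)$) so that the layer-cake integral produces exactly $\sup_y\{y\sum_{k:(k+1)^\beta/w(k)\le 1/y}(k+1)^{2\beta}\}$ and not some cosmetically different supremum; everything else is a direct transcription of the proof of Theorem \ref{paley-CQG} with $n_\alpha^2$ replaced by $(k+1)^{2\beta}$ and the Plancherel/Hausdorff–Young inputs replaced by Proposition \ref{prop.rd.}.
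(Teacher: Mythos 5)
Your overall strategy (Marcinkiewicz interpolation over the layers $S_k$, strong $(2,2)$ from Plancherel, weak $(1,1)$ from Proposition \ref{prop.rd.}) is the same as the paper's, but your specific choice of $\nu$ and $A$ breaks the argument at exactly the point you flag as delicate. Two things go wrong. First, with $\nu(k)=w(k)^2(k+1)^{2\beta}$ and $Af(k)=\bigl(\sum_{\alpha\in S_k}n_\alpha\|\widehat{f}(\alpha)\|_{HS}^2\bigr)^{1/2}/\bigl(w(k)(k+1)^{\beta}\bigr)$ one has
\begin{equation*}
\nu(k)\,|Af(k)|^p=w(k)^{2-p}(k+1)^{(2-p)\beta}\Bigl(\sum_{\alpha\in S_k}n_\alpha\|\widehat{f}(\alpha)\|_{HS}^2\Bigr)^{p/2},
\end{equation*}
not $w(k)^{2-p}(\cdots)^{p/2}$: the cancellation you assert only happens at $p=2$. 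So you would be proving a statement strictly stronger than the theorem (extra factor $(k+1)^{(2-p)\beta}\geq 1$), which is already a warning sign. Second, and fatally, the weak $(1,1)$ bound does not follow from the hypothesis. Proposition \ref{prop.rd.} gives $|Af(k)|\leq C\|f\|_{L^1(\g)}/w(k)$, so the level set $\{k:|Af(k)|\geq y\}$ is contained in $\{k:w(k)\leq C\|f\|_1/y\}$, and your layer-cake computation requires $\sup_{t>0}\bigl\{t\sum_{k:\,w(k)\geq t}(k+1)^{2\beta}\bigr\}<\infty$. The hypothesis $C_w<\infty$ only controls the \emph{smaller} sets $\{k:\,w(k)/(k+1)^{\beta}\geq t\}$. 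These are genuinely different: for the canonical weight $w(k)=(1+k)^{-(\beta+1)}$ used in Corollary \ref{paley-CQG-RD0} one checks $C_w<\infty$ while $t\sum_{k:\,w(k)\geq t}(k+1)^{2\beta}\approx t^{-\beta/(\beta+1)}\to\infty$ as $t\to 0$. Your proposed repair rests on the implication ``$|Af(k)|\geq y$ forces $(k+1)^{\beta}/w(k)\leq C\|f\|_1/y$,'' which is false: from $w(k)\leq C\|f\|_1/y$ one only gets the \emph{lower} bound $(k+1)^{\beta}/w(k)\geq y/(C\|f\|_1)$. The substitution $u(k)=(k+1)^{\beta}/w(k)$ also does not help, since then $\nu(k)=(k+1)^{4\beta}u(k)^{-2}$ and the layer-cake integrand becomes $(k+1)^{4\beta}$, again not matched by $C_w$.

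The fix is to drop the extra normalization: take $\nu(k)=w(k)^2$ and $Af(k)=\|p_k(f)\|_{L^2(\g)}/w(k)$, as the paper does. Then $\nu(k)|Af(k)|^p=w(k)^{2-p}\|p_k(f)\|_{L^2(\g)}^p$ is exactly the target quantity; the strong $(2,2)$ bound is unchanged; and Proposition \ref{prop.rd.} now gives $|Af(k)|\leq C(k+1)^{\beta}\|f\|_1/w(k)$, so the level set is $\{k:\widetilde{w}(k)\leq C\|f\|_1/y\}$ with $\widetilde{w}(k):=w(k)/(k+1)^{\beta}$. Writing $\nu(k)=\widetilde{w}(k)^2(k+1)^{2\beta}=\int_0^{\widetilde{w}(k)^2}(k+1)^{2\beta}\,dx$ and substituting $x=t^2$, the layer-cake integral produces exactly $\sup_{t>0}\bigl\{t\sum_{k:\,\widetilde{w}(k)\geq t}(k+1)^{2\beta}\bigr\}=C_w$, and Corollary \ref{Mar3} finishes the proof.
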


\begin{proof}

Put $\nu(k):=\displaystyle w(k)^2$. We will show that the sub-linear operator $A:L^1(\g)\rightarrow c(\left \{0\right\}\cup \n,\nu)$, $f\mapsto \displaystyle (\frac{||p_k(f)||_{L^2(\g)}}{w(k)})_{k\geq 0}$, is a well-defined bounded map from $L^p(\g)$ into $l^p(\left \{0\right\}\cup \n,\nu)$ for all $1<p\leq 2$.

Firstly, 
\begin{align*}
\sum_{k\geq 0}(\frac{||p_k(f)||_{L^2(\g)}}{w(k)})^2\nu(k)&=\sum_{k\geq 0}||p_k(f)||_{L^2(\g)}^2=||f||_{L^2(\g)}^2.
\end{align*}
Therefore, $A$ is of (weak) type $(2,2)$ with $C_2=1$.

Secondly, for all $y>0$, 

\begin{align*}
\sum_{k\geq 0: (Af)(k)>y}\nu(k)&\leq \sum_{k:\frac{w(k)}{(k+1)^{\beta}}<\frac{C||f||_{L^1(\g)}}{y} }w(k)^2 \mathrm{~by~Proposition~\ref{prop.rd.}}.
\end{align*}
Now put $\widetilde{w}(k):=\displaystyle \frac{w(k)}{(k+1)^{\beta}}$. Then
\begin{align*}
&=\sum_{k:\widetilde{w}(k)<\frac{C||f||_{L^1(\g)}}{y}}\int_0^{\widetilde{w}(k)^2}(k+1)^{2\beta}dx\\
&\leq \int_0^{(\frac{C||f||_{L^1(\g)}}{y})^2}\sum_{k:\sqrt{x}\leq \widetilde{w}(k)}(k+1)^{2\beta}dx\\
&=2\int_0^{\frac{C||f||_{L^1(\g)}}{y}}t\cdot \sum_{k:t\leq \widetilde{w}(k)}(k+1)^{2\beta}  dt\mathrm{~by~substituting~}x=t^2\\
&\leq \frac{2C_wC||f||_{L^1(\g)}}{y}.
\end{align*}

Therefore, by Corollary \ref{Mar3}, we obtain that
\[(\sum_{k\geq 0}w(k)^{2-p}||p_k(f)||_{L^2(\g)}^p)^{\frac{1}{p}}\lesssim ||f||_{L^p(\g)}.\]

\end{proof}

\begin{corollary}\label{paley-CQG-RD0}

Let a Kac type compact matrix quantum group $\g$ have the rapid decay property with respect to the natural length function on $\mathrm{Irr}(\g)$ and with inequality (\ref{RD}). Then, for each $1<p\leq 2$, there exists a universal constant $K=K(p)>0$ such that
\begin{equation}
(\sum_{k\geq 0}\frac{1}{(1+k)^{(2-p)(\beta+1)}}(\sum_{\alpha\in \mathrm{Irr}(\g):|\alpha|=k}n_{\alpha}||\widehat{f}(\alpha)||_{HS}^2)^{\frac{p}{2}})^{\frac{1}{p}}\leq K ||f||_{L^p(\g)}
\end{equation}
for all $f\sim \displaystyle \sum_{\alpha\in \mathrm{Irr}(\g)}n_{\alpha}\mathrm{tr}(\widehat{f}(\alpha)u^{\alpha}) \in L^p(\g)$.
\end{corollary}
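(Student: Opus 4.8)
The plan is to derive Corollary~\ref{paley-CQG-RD0} as the special case of Theorem~\ref{paley-CQG-rd} obtained by a concrete choice of weight function, exactly in the spirit of how Corollary~\ref{poly1} was extracted from Theorem~\ref{paley-CQG}. So the only real work is to check that the natural candidate weight satisfies the hypothesis $C_w<\infty$ of Theorem~\ref{paley-CQG-rd}, and that the conclusion of that theorem then literally says what is claimed.

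First I would set $w(k):=(1+k)^{\beta+1}$. Then $\widetilde w(k)=w(k)/(k+1)^\beta=(1+k)$, so the condition to verify is
\[
C_w=\sup_{y>0}\Bigl\{\,y\cdot\!\!\sum_{k\geq 0:\,(k+1)^{-1}\leq 1/y}(k+1)^{2\beta}\Bigr\}<\infty ?
\]
This sum is over those $k$ with $k+1\leq y$, i.e.\ finitely many terms, and gives $\sum_{k+1\leq y}(k+1)^{2\beta}$. The obvious estimate $\sum_{k+1\leq y}(k+1)^{2\beta}\leq y\cdot y^{2\beta}=y^{2\beta+1}$ would give $y\cdot y^{2\beta+1}=y^{2\beta+2}$, which is \emph{not} bounded. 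So the naive bound fails and one must be more careful. The resolution is that $\beta$ is not arbitrary: a compact matrix quantum group of Kac type always has at least linear growth of $s_k$ forcing $\beta\ge$ something, but more to the point, the inequality (\ref{RD}) trivially forces $C(1+k)^\beta\geq 1$, so $\beta$ could in principle be small. Rechecking: actually the correct reading is that the conclusion we want has exponent $(2-p)(\beta+1)$ on $(1+k)$ in the \emph{denominator}, which matches $w(k)^{2-p}=(1+k)^{(\beta+1)(2-p)}$ in the \emph{numerator} of Theorem~\ref{paley-CQG-rd}'s conclusion only if it appears as $w(k)^{2-p}$ multiplying — wait, it appears as $w(k)^{2-p}$ on the left side as a factor, so to get a denominator one needs $w(k)^{-1}$; hence the right choice is $w(k):=(1+k)^{-(\beta+1)}$. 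I will use that instead.

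So the corrected plan: take $w(k):=(1+k)^{-(\beta+1)}$, giving $\widetilde w(k)=w(k)/(k+1)^\beta=(k+1)^{-(2\beta+1)}$. The condition of Theorem~\ref{paley-CQG-rd} becomes the requirement that
\[
\sup_{y>0}\Bigl\{\,y\cdot\!\!\sum_{k\geq 0:\,(k+1)^{2\beta+1}\leq 1/y}(k+1)^{2\beta}\Bigr\}<\infty,
\]
since $\frac{(k+1)^\beta}{w(k)}=(k+1)^{2\beta+1}$. Now the sum runs over $k$ with $(k+1)^{2\beta+1}\leq 1/y$, i.e.\ $k+1\leq y^{-1/(2\beta+1)}$, and we estimate $\sum_{k+1\leq M}(k+1)^{2\beta}\leq M^{2\beta+1}$ with $M=y^{-1/(2\beta+1)}$, giving $M^{2\beta+1}=y^{-1}$, hence $y\cdot M^{2\beta+1}\leq 1$. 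Thus $C_w\leq 1<\infty$. Then Theorem~\ref{paley-CQG-rd} applies and yields, for each $1<p\le 2$, a universal constant $K(p)$ with
\[
\Bigl(\sum_{k\ge 0}w(k)^{2-p}\bigl(\textstyle\sum_{\alpha\in\mathrm{Irr}(\g):|\alpha|=k}n_\alpha\|\widehat f(\alpha)\|_{HS}^2\bigr)^{p/2}\Bigr)^{1/p}\le K\|f\|_{L^p(\g)},
\]
and substituting $w(k)^{2-p}=(1+k)^{-(\beta+1)(2-p)}$ is exactly the claimed inequality. The main (very minor) obstacle is just getting the bookkeeping of the exponents right — pinning down that $\widetilde w(k)=(k+1)^{-(2\beta+1)}$ and that the defining set $\{k:(k+1)^\beta/w(k)\le 1/y\}$ translates into $\{k+1\le y^{-1/(2\beta+1)}\}$ — after which the bound $C_w\le 1$ is immediate and no further analysis is needed.
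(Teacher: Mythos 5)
Your proposal is correct and follows essentially the same route as the paper: choose $w(k)=(1+k)^{-(\beta+1)}$, verify $C_w<\infty$ for Theorem \ref{paley-CQG-rd} (the paper bounds the sum $\sum_{k+1\le y^{-1/(2\beta+1)}}(k+1)^{2\beta}$ by an integral comparison rather than your cruder but equally valid term-count estimate), and substitute. The initial false start with $w(k)=(1+k)^{\beta+1}$ is correctly diagnosed and repaired within the proposal, so nothing is missing.
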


\begin{proof}
Take $w(k):=\displaystyle \frac{1}{(1+k)^{\beta+1}}$ and $E=\mathrm{Irr}(\g)$. Then
\begin{align*}
C_w&=\sup_{y>0}\left \{y\cdot \sum_{k\geq 0:(1+k)^{2\beta+1}\leq \frac{1}{y}}(1+k)^{2\beta}\right\}\\
&\leq \sup_{0<y\leq 1}\left \{y\cdot \int_{1}^{(\frac{1}{y})^{\frac{1}{2\beta+1}}+1}t^{2\beta}dt\right\}\\
&\leq \sup_{0<y\leq 1}\left \{y\cdot \frac{(2\cdot (\frac{1}{y})^{\frac{1}{2\beta+1}})^{2\beta+1}}{2\beta+1}\right\}\\
&=\frac{2^{\beta+1}}{2\beta+1}<\infty.
\end{align*}
\end{proof}

\begin{corollary}\label{paley-CQG-RD}

Let a Kac type compact matrix quantum group $\g$ have the rapid decay property with respect to the natural length function on $\mathrm{Irr}(\g)$ and with inequality (\ref{RD}). Then, for each $1<p\leq 2$, there exists a universal constant $K=K(p)>0$ such that
\begin{equation}
(\sum_{k\geq 0}\sum_{\alpha\in \mathrm{Irr}(\g):|\alpha|=k}\frac{1}{(1+|\alpha|)^{(2-p)(\beta+1)}(\sum_{\beta\in S_k}n_{\beta}^2)^{\frac{2-p}{2}}}n_{\alpha}||\widehat{f}(\alpha)||_{S^p_{n_{\alpha}}}^p)^{\frac{1}{p}}\leq K ||f||_{L^p(\g)}
\end{equation}
for all $f\sim \displaystyle \sum_{\alpha\in \mathrm{Irr}(\g)}n_{\alpha}\mathrm{tr}(\widehat{f}(\alpha)u^{\alpha}) \in L^p(\g)$.
\end{corollary}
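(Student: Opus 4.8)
The plan is to derive Corollary~\ref{paley-CQG-RD} from Corollary~\ref{paley-CQG-RD0} by passing from the ``block'' Hilbert--Schmidt quantity $\bigl(\sum_{\alpha\in S_k}n_\alpha\|\widehat f(\alpha)\|_{HS}^2\bigr)^{1/2}$ to the individual Schatten quantities $n_\alpha\|\widehat f(\alpha)\|_{S^p_{n_\alpha}}^p$. So first I would fix $1<p\le 2$ and set, as before, $\frac1r=\frac1p-\frac12$, so that $2<r\le\infty$; the elementary trace inequality used in Corollary~\ref{paley-CQG2} gives
\[
n_\alpha\|\widehat f(\alpha)\|_{S^p_{n_\alpha}}^p=n_\alpha\,\mathrm{tr}(|\widehat f(\alpha)|^p)\le n_\alpha\,\|\widehat f(\alpha)\|_{HS}^p\,\|Id_{n_\alpha}\|_{S^r_{n_\alpha}}^p=n_\alpha^{2-\frac p2}\|\widehat f(\alpha)\|_{HS}^p.
\]
Thus for each fixed level $k$ it suffices to compare $\sum_{\alpha\in S_k}n_\alpha^{2-\frac p2}\|\widehat f(\alpha)\|_{HS}^p$ with $\bigl(\sum_{\alpha\in S_k}n_\alpha\|\widehat f(\alpha)\|_{HS}^2\bigr)^{p/2}$ — that is, an $\ell^p$-versus-$\ell^2$ comparison on the finite index set $S_k$, weighted so as to produce the factor $s_k^{(2-p)/2}=\bigl(\sum_{\beta\in S_k}n_\beta^2\bigr)^{(2-p)/2}$ in the denominator.

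The key step is therefore a finite-dimensional H\"older estimate at each level $k$. Writing $x_\alpha:=\sqrt{n_\alpha}\,\|\widehat f(\alpha)\|_{HS}$, one has $n_\alpha^{2-\frac p2}\|\widehat f(\alpha)\|_{HS}^p=n_\alpha^{2-p}x_\alpha^{p}$, and I want
\[
\sum_{\alpha\in S_k}n_\alpha^{2-p}x_\alpha^{p}\le s_k^{\frac{2-p}{2}}\Bigl(\sum_{\alpha\in S_k}x_\alpha^2\Bigr)^{\frac p2}.
\]
Apply H\"older with exponents $\frac2p$ and $\frac2{2-p}$ (note $\frac2p+\frac2{2-p}\cdot\frac{2-p}{2}\cdot\frac{p}{2-p}$... more simply: $\frac p2+\frac{2-p}2=1$ suitably scaled): $\sum_\alpha n_\alpha^{2-p}x_\alpha^{p}=\sum_\alpha (x_\alpha^2)^{p/2}\,(n_\alpha^{2})^{(2-p)/2}\le\bigl(\sum_\alpha x_\alpha^2\bigr)^{p/2}\bigl(\sum_\alpha n_\alpha^2\bigr)^{(2-p)/2}$, which is exactly the claimed bound since $\sum_\alpha n_\alpha^2=s_k$. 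Consequently
\[
\sum_{\alpha\in S_k}\frac{n_\alpha\|\widehat f(\alpha)\|_{S^p_{n_\alpha}}^p}{s_k^{(2-p)/2}}\le\Bigl(\sum_{\alpha\in S_k}n_\alpha\|\widehat f(\alpha)\|_{HS}^2\Bigr)^{p/2}
\]
for every $k\ge 0$.

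To finish, multiply both sides by $(1+k)^{-(2-p)(\beta+1)}$, sum over $k\ge 0$, and take the $p$-th root; since $|\alpha|=k$ for $\alpha\in S_k$ one may replace $(1+k)$ by $(1+|\alpha|)$ inside the sum, giving
\[
\Bigl(\sum_{k\ge 0}\sum_{\alpha\in S_k}\frac{n_\alpha\|\widehat f(\alpha)\|_{S^p_{n_\alpha}}^p}{(1+|\alpha|)^{(2-p)(\beta+1)}s_k^{(2-p)/2}}\Bigr)^{1/p}\le\Bigl(\sum_{k\ge 0}\frac{1}{(1+k)^{(2-p)(\beta+1)}}\bigl(\sum_{\alpha\in S_k}n_\alpha\|\widehat f(\alpha)\|_{HS}^2\bigr)^{p/2}\Bigr)^{1/p},
\]
and the right-hand side is $\le K\|f\|_{L^p(\g)}$ by Corollary~\ref{paley-CQG-RD0}. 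I do not anticipate a serious obstacle here: the only point requiring a little care is getting the H\"older exponents and the power of $s_k$ to line up correctly (and making sure the argument degenerates correctly at $p=2$, where the inequality becomes an equality and $s_k^{0}=1$); everything else is bookkeeping and an appeal to the already-established Corollary~\ref{paley-CQG-RD0}.
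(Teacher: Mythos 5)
Your proposal is correct and follows essentially the same route as the paper: the Schatten-to-Hilbert--Schmidt comparison $n_\alpha\|\widehat f(\alpha)\|_{S^p_{n_\alpha}}^p\le n_\alpha^{2-\frac p2}\|\widehat f(\alpha)\|_{HS}^p$, followed by a levelwise H\"older inequality with exponents $\tfrac2p$ and $\tfrac2{2-p}$ producing the factor $s_k^{(2-p)/2}$, and then an appeal to Corollary~\ref{paley-CQG-RD0}. The paper phrases the H\"older step as a generalized H\"older bound on $\ell^p(S_k)$ norms, but the content is identical to your computation.
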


\begin{proof}
Since $\displaystyle \frac{1}{p}=\frac{1}{2}+\frac{2-p}{2p}$ and $n_{\alpha}^{-\frac{1}{p}}||\widehat{f}(\alpha)||_{S^p_{n_{\alpha}}}\leq n_{\alpha}^{-\frac{1}{2}}||\widehat{f}(\alpha)||_{HS}$, we have that
\begin{align*}
\sum_{\alpha\in S_k}n_{\alpha}||\widehat{f}(\alpha)||_{S^p_{n_{\alpha}}}^p&\leq \sum_{\alpha\in S_k}n_{\alpha}^{2-\frac{p}{2}}||\widehat{f}(\alpha)||_{HS}^p\\
&=||(n_{\alpha}^{\frac{1}{2}}||\widehat{f}(\alpha)||_{HS}\cdot n_{\alpha}^{\frac{2}{p}-1})_{\alpha\in S_k}||_{\ell^p(S_k)}^p\\
&\leq ||(n_{\alpha}^{\frac{1}{2}}||\widehat{f}(\alpha)||_{HS})_{\alpha\in S_k}||_{\ell^2(S_k)}^p\cdot (\sum_{\alpha\in S_k}n_{\alpha}^2)^{\frac{2-p}{2}}\\
&=(\sum_{\alpha\in S_k}n_{\alpha}^2)^{\frac{2-p}{2}}\cdot (\sum_{\alpha\in S_k}n_{\alpha}||\widehat{f}(\alpha)||_{HS}^2)^{\frac{p}{2}}.
\end{align*}
Then we obtain the conclusion.
\end{proof}

\section{Hardy-Littlewood inequalities}\label{Sec3}
This section is devoted to establish explicit Hardy-Littlewood inequalities for the main targets: the reduced group $C^*$-algebras $C_r^*(G)$ with finitely generated discrete group $G$, free orthogonal quantum groups $O_N^+$ and free permutation quantum groups $S_N^+$.

\subsection{The reduced group $C^*$-algebras $C_r^*(G)$}

In this subsection, we treat finitely generated discrete groups $G$. As expected, we found clear evidence that the geometric information of the underlying group is of significant importance for understanding non-commutative $L^p$-spaces $L^p(VN(G))$.

\begin{definition}
A discrete group with a fixed finite symmetric generating set $S$ is said to be polynomially growing if there exist $C>0$ and $k>0$ such that
\[\# \left \{g\in G: |g|\leq n \right\}\leq C (1+n)^k~\mathrm{for~all~}n\geq 0.\] 
In this case, the polynomial growth rate $k_0$ is defined as the minimum of such $k$. Then $k_0$ becomes a natural number and is independent of the choice of generating set $S$.
\end{definition}

\begin{theorem}\label{co-comm}
\begin{enumerate}
\item Let $G$ be a finitely generated discrete group, which has the polynomial growth rate $k_0$. Then, for each $1<p\leq 2$, there exists a universal constant $K=K(p)$ such that
\begin{equation}\label{HL-poly.disc.}
(\sum_{g\in G}\frac{1}{(1+|g|)^{(2-p)k_0}}|f(g)|^p)^{\frac{1}{p}}\leq K ||\lambda(f)||_{L^p(VN(G))}
\end{equation}
for all $\lambda(f)\displaystyle \sim \sum_{g\in G}f(g)\lambda_g\in L^p(VN(G))$.

\item Let $G$ be a finitely generated discrete group with 
\[b_k=\displaystyle \#\left \{g\in G:|g|_S\leq k\right\}\leq C r^k\mathrm{~for~all~}k\geq 0,\]
where $|\cdot|_S$ is the natural length function with respect to a finite symmetric generating set $S$. Then, for each $1<p\leq 2$, there exists a universal constant $K=K(p,S)>0$ such that
\begin{equation}\label{HL-exp.disc.}
(\sum_{g\in G}\frac{1}{r^{(2-p)|g|}}|f(g)|^p)^{\frac{1}{p}}\leq K ||\lambda(f)||_{L^p(VN(G))}
\end{equation}
for all $\lambda(f)\sim \displaystyle \sum_{g\in G} f(g)\lambda_g\in L^p(VN(G))$.
\end{enumerate}
\end{theorem}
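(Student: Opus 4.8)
The plan is to realize both inequalities as direct instances of the general Paley-type machinery from Section 3, applied to the compact quantum group $\g = (C_r^*(G),\Delta)$, which is of Kac type with $\mathrm{Irr}(\g)$ identified with $G$ and every $n_\alpha = 1$. Under this identification the Fourier transform $\widehat{f}$ of $\lambda(f)\sim\sum_g f(g)\lambda_g$ has scalar ``matrix coefficients'' with $\|\widehat{f}(g)\|_{S^p_{1}} = |f(g)|$, and the weighted $\ell^p$-norm appearing in Corollary \ref{paley-CQG2} collapses to $(\sum_g w(g)^{2-p}|f(g)|^p)^{1/p}$. So the whole task is to exhibit a weight $w$ on $G$ with $C_w = \sup_{t>0}\{t\cdot\sum_{w(g)\geq t} 1\}<\infty$ and with $w(g)^{2-p}$ equal to the desired weight, and then quote Theorem \ref{paley-CQG}.

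For part (1), I would take $w(g) := (1+|g|)^{-k_0}$, exactly as in Corollary \ref{poly1} with $\gamma = k_0$ (note $b_k = \#\{g : |g|\leq k\}\leq C(1+k)^{k_0}$ by definition of the polynomial growth rate). The computation that $C_w<\infty$ is the one already carried out in the proof of Corollary \ref{poly1}: for $0<t\leq 1$ the set $\{g : w(g)\geq t\} = \{g : |g|\leq t^{-1/k_0}-1\}$ has cardinality at most $C(t^{-1/k_0})^{k_0} = Ct^{-1}$, so $t\cdot\#\{\cdots\}\leq C$; for $t>1$ the sum is empty. Then Corollary \ref{paley-CQG2} (or equivalently Corollary \ref{poly1}) with all $n_\alpha=1$ gives precisely \eqref{HL-poly.disc.}. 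So part (1) is essentially just an application of Corollary \ref{poly1} to $C_r^*(G)$, after recording that the natural length function on $\mathrm{Irr}(C_r^*(G))$ coincides with the word-length function $|\cdot|_S$ on $G$ once $S$ is used as the generating set of corepresentations; this last point deserves a sentence but is immediate.

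For part (2), the hypothesis is only exponential growth $b_k\leq Cr^k$, so a polynomial weight no longer has bounded $C_w$; instead I would take the exponential weight $w(g) := r^{-|g|}$. Then $\{g : w(g)\geq t\} = \{g : r^{|g|}\leq t^{-1}\} = \{g : |g|\leq \log_r(1/t)\}$, which for $0<t\leq 1$ has at most $Cr^{\log_r(1/t)} = C/t$ elements, giving $t\cdot\#\{\cdots\}\leq C$ and hence $C_w\leq C<\infty$ (again the sum is empty for $t>1$). Feeding $w(g)=r^{-|g|}$ into Corollary \ref{paley-CQG2} yields $(\sum_g r^{-(2-p)|g|}|f(g)|^p)^{1/p}\lesssim \|\lambda(f)\|_{L^p(VN(G))}$, which is \eqref{HL-exp.disc.}; the constant depends on $p$ and, through $C$ and $r$, on the generating set $S$, matching the stated $K=K(p,S)$.

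The only genuine content beyond bookkeeping is the verification of the Paley condition $C_w<\infty$ for each chosen weight, and in both cases this reduces to the growth hypothesis via the substitution already used in Corollary \ref{poly1}; there is no real obstacle. The one point to be careful about is the identification of the natural length function on $\mathrm{Irr}(C_r^*(G))$ with the word length $|\cdot|$ on $G$: a discrete group is ``finitely generated'' exactly when $C_r^*(G)$ is a compact matrix quantum group in the sense of Definition \ref{CMQG1} (with the generating corepresentations indexed by a finite symmetric generating set), and then $\lambda_g\subseteq \lambda_{s_1}\otimes\cdots\otimes\lambda_{s_m}$ with $s_j\in S$ iff $g = s_1\cdots s_m$, so the two notions of length agree on the nose. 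With that remark in place, parts (1) and (2) follow from Corollary \ref{poly1} and Theorem \ref{paley-CQG}/Corollary \ref{paley-CQG2} respectively.
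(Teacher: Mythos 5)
Your proposal is correct and follows essentially the same route as the paper: part (1) is quoted directly from Corollary \ref{poly1} (with $n_\alpha\equiv 1$), and part (2) uses the weight $w(g)=r^{-|g|}$ with the verification $t\cdot\#\{g:|g|\le\log_r(1/t)\}\le C$ before invoking Theorem \ref{paley-CQG}, exactly as in the paper. Your extra remark identifying the natural length on $\mathrm{Irr}(C_r^*(G))$ with the word length is a sensible addition but does not change the argument.
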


\begin{proof}

(1) Clear from Corollary \ref{poly1}.

(2) Consider $w(g):=\displaystyle \frac{1}{r^{|g|}}$. Then
\[\sup_{t>0}\left \{ t\cdot \sum_{|g|\leq  log_{r}(\frac{1}{t})}1 \right\}=\sup_{0<t\leq 1}\left \{ t\cdot \sum_{|g|\leq  log_{r}(\frac{1}{t})}1 \right\}\leq C.\]

Then the conclusion follows from Theorem \ref{paley-CQG}
\end{proof}

\begin{remark}
\begin{enumerate}
\item For every finitely generated discrete group, there exist $C, r>0$ such that $b_k\leq C r^k$ for all $k\geq 0$ by the Fekete's subadditivity lemma. Therefore, Theorem \ref{co-comm} covers all finitely generated discrete group.
\item In fact, the inequality (\ref{HL-poly.disc.}) is sharp by Theorem \ref{comp.char.}.
\end{enumerate}
\end{remark}

Although we can always find inequality (\ref{HL-exp.disc.}) for every finitely generated discrete group, we can get a much better result by adding more detailed geometric information of underlying group. Indeed, if we assume hyperbolicity of group, then the inequality is considerably improved.

\begin{theorem}\label{main-gp.v.N.}

Let $G$ be any non-elementary word hyperbolic group with $b_k\leq Cr^k$ for all $k\geq 0$ with respect to a finite symmetric generating set $S$. Then, for each $1<p\leq 2$, there exists a universal constant $K=K(S,p)$ such that 
\begin{align}
(\sum_{g\in G}\frac{1}{r^{\frac{(2-p)|g|}{2}}(1+|g|)^{4-2p}}|f(g)|^p)^{\frac{1}{p}}&\leq (\sum_{k\geq 0}\frac{1}{(k+1)^{4-2p}}(\sum_{g\in G:|g|=k}|f(g)|^2)^{\frac{p}{2}})^{\frac{1}{p}}\\
\notag&\leq K ||\lambda(f)||_{L^p(VN(G))}.
\end{align}
for all $\lambda(f)\sim \displaystyle \sum_{g\in G} f(g)\lambda_g\in L^p(VN(G))$.
\end{theorem}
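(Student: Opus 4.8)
The plan is to invoke Theorem~\ref{paley-CQG-rd} (the refined Paley-type inequality under the rapid decay property) with a weight function tuned to the known growth of word hyperbolic groups, then postprocess with Hölder. The key input is that a non-elementary word hyperbolic group $G$ has the \emph{rapid decay property} in the sense of the excerpt with $\beta=1$, that is, for every $k\ge 0$ and every $f$ supported on the sphere $\{g\in G:|g|=k\}$ one has $\|\lambda(f)\|_{L^\infty}\le C(1+k)\|\lambda(f)\|_{L^2}$ (this is the Haagerup–de la Harpe–Jolissaint property; it is precisely the case $\beta = 1$ of inequality~(\ref{RD}) applied to $\g=(C_r^*(G),\Delta)$, where $\mathrm{Irr}(\g)=G$, all $n_\alpha=1$, and the natural length function equals the word length $|\cdot|_S$). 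Granting this, Corollary~\ref{paley-CQG-RD0} with $\beta=1$ immediately gives
\[
\Big(\sum_{k\ge 0}\frac{1}{(1+k)^{(2-p)\cdot 2}}\big(\sum_{g\in G:|g|=k}|f(g)|^2\big)^{\frac p2}\Big)^{\frac1p}\le K\|\lambda(f)\|_{L^p(VN(G))},
\]
which is exactly the middle quantity in the statement (note $(2-p)(\beta+1)=(2-p)\cdot 2=4-2p$). So the second inequality of the theorem is a direct special case of an already-proved corollary.

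It remains to prove the first inequality, which compares the weighted $\ell^p$-norm on the left with the $\ell^p(\ell^2)$-type quantity in the middle; this is pure Hölder/counting and uses no quantum-group structure. First I would fix $k$ and apply Hölder's inequality on the sphere $S_k:=\{g\in G:|g|=k\}$: writing $\tfrac1p=\tfrac12+\tfrac{2-p}{2p}$,
\[
\sum_{g\in S_k}|f(g)|^p\le\Big(\sum_{g\in S_k}|f(g)|^2\Big)^{\frac p2}\cdot(\#S_k)^{\frac{2-p}{2}}.
\]
Since $\#S_k\le b_k\le Cr^k$, we get $(\#S_k)^{(2-p)/2}\le C^{(2-p)/2}r^{(2-p)k/2}$. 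Multiplying by the weight $r^{-(2-p)k/2}(1+k)^{-(4-2p)}$ and summing over $k$, the factors $r^{\pm(2-p)k/2}$ cancel up to the constant $C^{(2-p)/2}$, leaving
\[
\sum_{g\in G}\frac{|f(g)|^p}{r^{(2-p)|g|/2}(1+|g|)^{4-2p}}\le C^{\frac{2-p}{2}}\sum_{k\ge 0}\frac{1}{(1+k)^{4-2p}}\Big(\sum_{g\in S_k}|f(g)|^2\Big)^{\frac p2},
\]
and taking $p$-th roots gives the first inequality with an extra harmless constant that can be absorbed into $K$.

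The only genuine obstacle is justifying the rapid decay property with exponent $\beta=1$ for non-elementary word hyperbolic groups; everything else is a mechanical combination of Corollary~\ref{paley-CQG-RD0} and one application of Hölder. I would cite Jolissaint and Haagerup–de la Harpe (or de la Harpe's observation that hyperbolic groups have property RD) for this fact — it is classical and needs no reproof here. One minor point to check is that the ``rapid decay property'' as defined in the excerpt (the $L^\infty$–$L^2$ estimate on each sphere) is literally the statement of property RD for groups when specialized to $C_r^*(G)$; this is immediate since the natural length function on $\mathrm{Irr}(C_r^*(G))=G$ coincides with the word length $|\cdot|_S$ by Proposition~\ref{CMQG2} (the generating set $S\cup S^{-1}=S$ provides the finite set of generating corepresentations). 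With those identifications in place, the proof is essentially two lines of invocation plus the Hölder computation above.
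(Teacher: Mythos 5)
Your proposal is correct and follows essentially the same route as the paper: the paper's proof is precisely ``apply Corollary \ref{paley-CQG-RD} (and \ref{paley-CQG-RD0}) to $C_r^*(G)$ using de la Harpe's theorem that non-elementary hyperbolic groups satisfy the sphere estimate $\|\lambda(f)\|_\infty\le C(1+k)\|f\|_2$, i.e.\ property RD with $\beta=1$,'' and your H\"older step on each sphere $S_k$ with $\#S_k\le Cr^k$ is exactly the computation carried out in the proof of Corollary \ref{paley-CQG-RD}. Your remark that the constant $C^{(2-p)/2}$ must be absorbed into $K$ is a fair (and correct) observation about how the first inequality should be read.
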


\begin{proof}
The conclusion follows from Corollary \ref{paley-CQG-RD} and \cite{Har88}.
\end{proof}

\subsection{Free quantum groups}
Let us begin the investigation of `genuine' quantum examples: Free orthogonal quantum groups $O_N^+$ and free permutation quantum groups $S_{N+2}^+$. Moreover, the Hardy-Littlewood inequality for $O_N^+$ becomes an equivalence under centrality, positivity, monotonic decrease and a non-oscillation type condition of Fourier coefficients, as for the result for $SU(2)$ [Theorem 2.10 \cite{ANR15}]. This is considered as a way to prove sharpness of Hardy-Littlewood inequalities.

\begin{theorem}\label{main-Free.Ortho.}
\begin{enumerate}
\item Let $\g$ be the free orthogonal quantum group $O_2^+$ or the free permutation quantum group $S_4^+$. Then, for each $1<p\leq 2$, there exists a universal constant $K=K(p)$ such that
\begin{equation}\label{HL-Free.Orth.-1}
(\sum_{k\geq 0}\frac{1}{(1+k)^{6-3p}}n_k||\widehat{f}(k)||_{S^p_{k+1}}^p)^{\frac{1}{p}}\leq (\sum_{k\geq 0} \frac{1}{(1+k)^{6-3p}}n_k^{2-\frac{p}{2}}||\widehat{f}(k)||_{HS}^p)^{\frac{1}{p}}\leq K||f||_{L^p(\g)}
\end{equation}
for all $f\sim \displaystyle \sum_{k\geq 0}n_k\mathrm{tr}(\widehat{f}(k)u^k)\in L^p(\g)$.

\item Let $\g$ be a free orthogonal quantum group $O_N^+$ or a free permutation quantum group $S_{N+2}^+$ with $N\geq 3$. Then, for each $1<p\leq 2$, there exists a universal constant $K=K(p)$ such that
\begin{align}\label{HL-Free.Ortho.-2}
(\sum_{k\geq 0}\frac{1}{r_0^{(2-p)k}(1+k)^{4-2p}}n_k||\widehat{f}(k)||_{S^p_{n_k}}^p)^{\frac{1}{p}}&\leq (\sum_{k\geq 0}\frac{1}{r_0^{(2-p)k}(1+k)^{4-2p}}n_k^{2-\frac{p}{2}}||\widehat{f}(k)||_{HS}^p)^{\frac{1}{p}}\\
\notag&\leq K ||f||_{L^p(\g)}
\end{align}
for all $f\sim \displaystyle \sum_{k\geq 0}n_k\mathrm{tr}(\widehat{f}(k)u^k)\in L^p(\g)$,
where $\displaystyle r_0=\frac{N+\sqrt{N^2-4}}{2}$.
\end{enumerate}
\end{theorem}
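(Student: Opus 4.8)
The plan is to derive both parts directly from the Paley-type corollaries proved in Section~3 by feeding in the known combinatorial data for the free quantum groups, namely the fusion rules $u^a \otimes u^b \cong u^{|a-b|} \oplus u^{|a-b|+2} \oplus \cdots \oplus u^{a+b}$ (so that $\mathrm{Irr}(\g)$ is identified with $\{0\}\cup\n$ and the natural length is $|k|=k$), the dimension asymptotics $n_k\approx r_0^k$ (for $N\geq 3$) or $n_k=k+1$ (for $O_2^+$, $S_4^+$), and the value of the rapid decay exponent $\beta$. The essential analytic input is already packaged in Corollary~\ref{paley-CQG-RD}, whose hypothesis is exactly that $\g$ be a Kac type compact matrix quantum group with the rapid decay property; the free quantum groups are Kac type compact matrix quantum groups, so it remains only to (i) verify the rapid decay property with the correct exponent $\beta$, (ii) compute $s_k=\sum_{\alpha\in S_k}n_\alpha^2=n_k^2$ since each level $S_k$ is a singleton, and (iii) substitute these into the displayed inequalities and simplify the weights.

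First I would handle part~(2), the case $N\geq 3$. The fusion rules show $S_k=\{u^k\}$ is a single irreducible of dimension $n_k$, hence $s_k=n_k^2$ and the "$HS$-side" of Corollary~\ref{paley-CQG-RD0} reads $(\sum_k (1+k)^{-(2-p)(\beta+1)} n_k^{p/2}\|\widehat f(k)\|_{HS}^p)^{1/p}$, while Corollary~\ref{paley-CQG-RD} gives the $S^p$-side with weight $(1+k)^{-(2-p)(\beta+1)} (n_k^2)^{-(2-p)/2} n_k = (1+k)^{-(2-p)(\beta+1)} n_k^{p-1}$ on the $S^p$-norm. The key numerical fact I would invoke is that for these free quantum groups the rapid decay property holds with $\beta=1$; this is Vergnioux's theorem (the property RD with polynomial degree $1$ for $O_N^+$ and $S_N^+$), which I would cite (it plays the same role as \cite{Har88} did in Theorem~\ref{main-gp.v.N.}). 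With $\beta=1$ we get exponent $(2-p)(\beta+1)=(2-p)\cdot 2=4-2p$ matching the claimed $(1+k)^{4-2p}$. Then since $n_k\approx r_0^k$, we have $n_k^{p-1}=n_k^{-(2-p)}n_k \approx r_0^{-(2-p)k}\cdot r_0^k$, and absorbing the polynomially-bounded ratio $n_k/r_0^k$ (which is bounded above and below by positive constants, see the dimension formula in the preliminaries) into the universal constant $K$ converts $n_k^{p-1}$ into $r_0^{-(2-p)k}n_k$ on the $S^p$-side and $n_k^{p/2}=n_k^{-(2-p)/2}n_k\cdot\!\!\cdot$ hmm more carefully $n_k^{p/2}\approx r_0^{-(2-p)k/2}$ times $n_k^{1}$ divided appropriately; the point is that the exponential factor $r_0^{(2-p)k}$ in the denominator is exactly $n_k^{2-p}$ up to constants, which is how the $HS$-weight $n_k^{2-p/2}=n_k^{2-p}\cdot n_k^{p/2}$ reorganizes into $\frac{1}{r_0^{(2-p)k}}n_k^{2-p/2}$ up to a bounded factor. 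I would state these comparisons as "$\approx$" with the constants depending only on $N$ (not on $p$), then fold them into $K=K(p)$.

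For part~(1), the degenerate cases $O_2^+$ and $S_4^+$, the same scheme applies but now $n_k=k+1$ grows only polynomially, so I would instead apply Corollary~\ref{poly1} together with the rapid-decay refinement, or more directly Corollary~\ref{paley-CQG-RD0}/\ref{paley-CQG-RD} with $\beta=1$: the $S^p$-weight becomes $(1+k)^{-(2-p)(\beta+1)}(n_k^2)^{-(2-p)/2}n_k = (1+k)^{-(4-2p)}(k+1)^{-(2-p)}(k+1) = (k+1)^{-(4-2p)-(2-p)+1}=(k+1)^{-(6-3p)}$, which is precisely the claimed exponent $6-3p$; similarly the $HS$-weight $(1+k)^{-(4-2p)}n_k^{p/2}$ combined with the identity $n_k^{2-p/2}=n_k^{2-p}n_k^{p/2}=(k+1)^{2-p}n_k^{p/2}$ rearranges to $(1+k)^{-(6-3p)}n_k^{2-p/2}$. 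So the bookkeeping for part~(1) is actually cleaner because there is no exponential term at all.

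The main obstacle, and the only nontrivial point, is step~(i): establishing the rapid decay property for $O_N^+$ and $S_N^+$ with the precise exponent $\beta=1$. Everything else is substitution into the already-proved Paley inequalities and elementary manipulation of exponents using $S_k=\{u^k\}$ and the dimension asymptotics. I would therefore structure the proof as: "By Vergnioux's result, $\g$ has property RD with $\beta=1$; since each $S_k$ is a singleton, $s_k=n_k^2$; applying Corollary~\ref{paley-CQG-RD} (resp.\ Corollary~\ref{paley-CQG-RD0}) and using $n_k\approx r_0^k$ (resp.\ $n_k=k+1$) to absorb bounded factors into the constant, we obtain the stated inequalities; the inequality between the $S^p$-side and the $HS$-side is the content of the first inequality in Corollary~\ref{paley-CQG-RD} (equivalently the Hölder estimate $n_\alpha^{-1/p}\|\widehat f(\alpha)\|_{S^p_{n_\alpha}}\leq n_\alpha^{-1/2}\|\widehat f(\alpha)\|_{HS}$ used in its proof)." One cautionary check I would not skip: confirming that Vergnioux's RD estimate is stated for the $L^\infty$-norm against the $L^2$-norm on each length-$k$ spectral subspace in exactly the form~(\ref{RD}) — if it is stated with the word-length on the quantum group rather than on a generating representation, the two coincide here because $O_N^+$, $S_N^+$ are generated by a single self-conjugate fundamental representation, so $|\alpha_k|=k$ for the generator and the two length functions agree.
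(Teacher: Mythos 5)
Your argument for part (2) is essentially the paper's proof: cite rapid decay with $\beta=1$ for $O_N^+$ (Vergnioux) and $S_{N+2}^+$ (Brannan), observe that each $S_k$ is a singleton so $s_k=n_k^2\approx r_0^{2k}$, and feed this into Corollaries \ref{paley-CQG-RD0} and \ref{paley-CQG-RD}, absorbing the bounded ratio $n_k/r_0^k$ into the constant. For part (1) you take a slightly different route: the paper simply notes $n_k=k+1$, hence $b_k\lesssim(1+k)^3$, and invokes Corollary \ref{poly1} with $\gamma=3$ to get the exponent $3(2-p)=6-3p$; you instead run the rapid-decay corollaries with $\beta=1$, which yields the same weight $(1+k)^{-(2-p)(\beta+1)}s_k^{-(2-p)/2}=(1+k)^{-(6-3p)}$. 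That route is also valid, but note that Vergnioux's theorem is stated for the non-amenable range $N\geq 3$; for $O_2^+$ and $S_4^+$ the RD estimate with $\beta=1$ has to come from the trivial bound $\|f\|_{L^\infty(\g)}\leq s_k^{1/2}\|f\|_{L^2(\g)}=(k+1)\|f\|_{L^2(\g)}$ on each length-$k$ block (valid since $S_k$ is a singleton of dimension $k+1$), a point worth making explicit. Your intermediate line computing the combined exponent including the factor $n_k$ contains a sign slip (the product equals $(k+1)^{3p-5}$, not $(k+1)^{-(6-3p)}$), but since the target weight is $n_k(1+k)^{-(6-3p)}=(k+1)^{3p-5}$ the conclusion you reach is nevertheless correct.
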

\begin{proof}
(1) In this case, $n_k=k+1$ for all $k$, so that the conclusion follows from Corollary \ref{poly1}.

(2) It is known that free orthogonal quantum groups $O_N^+$ and free quantum groups $S_{N+2}$ with $N\geq 3$ have the rapid decay property with $\beta=1$ (\cite{Ver07} and \cite{Br13}). Also, $s_k=n_k^2\approx r_0^{2k}$ for all $k\in \left \{0\right\}\cup \n$. Therefore, Corollaries \ref{paley-CQG-RD0} and \ref{paley-CQG-RD} complete proof.
\end{proof}

\begin{remark}
All results of this paper for $S_N^+$ can be extended to quantum automorphism group $\g_{aut}(B,\psi)$ with a $\delta$-trace $\psi$ and $dim(B)=N$ via the same proofs.
\end{remark}

An important observation for the free orthogonal quantum groups $O_N^+$ is that the inequalities (\ref{HL-Free.Orth.-1}) and (\ref{HL-Free.Ortho.-2}) become actually equivalences in a large class. Essentially, this fact is based on the result for SU(2) [Theorem 2.10, \cite{ANR15}] and the following lemma moves the result to $O_N^+$.

\begin{lemma}\label{monoidal}
Let $\g=O_N^+$ or $S_{N+2}^+$ with $N\geq 2$ and consider $G=SU(2)$ or $SO(3)$ for each cases. Then, for $f\sim \displaystyle \sum_{n\geq 0} c_n\chi^1_n\in L^p(\g)$, 
\[||f||_{L^p(\g)}=||\Phi(f)\sim \sum_{n\geq 0}c_n\chi^2_n||_{L^p(G)}\mathrm{~for~all~}1\leq p\leq \infty.\]
Here, $\chi^1_n=\mathrm{tr}(u^n)$ and $\chi^2_n=\mathrm{tr}(v^n)$ where $u^n$ and $v^n$ are the $n$-th irreducible unitary representations of $\g$ and $G$ respectively.
\end{lemma}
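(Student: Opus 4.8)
\textbf{Proof strategy for Lemma \ref{monoidal}.} The plan is to exploit the fact that the fusion rules of $O_N^+$ (resp.\ $S_{N+2}^+$) coincide with those of $SU(2)$ (resp.\ $SO(3)$), so that the linear map $\Phi$ sending $\chi^1_n\mapsto\chi^2_n$ is a $*$-isomorphism between the $*$-algebras generated by the characters, intertwining the respective Haar states. Since both Haar states are tracial (Kac type), this will immediately yield equality of all $L^p$-norms via the formula $\|x\|_{L^p}=\varphi(|x|^p)^{1/p}$ together with a density/approximation argument.

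\emph{Step 1: the abstract character algebra.} For a compact quantum group of Kac type, the central algebra $Z(\g):=\overline{\mathrm{span}}\{\chi_\alpha:\alpha\in\mathrm{Irr}(\g)\}$ inside $L^\infty(\g)$ is a commutative von Neumann algebra, and the product $\chi_\alpha\chi_\beta=\sum_{\gamma}N_{\alpha\beta}^\gamma\chi_\gamma$ is governed entirely by the fusion multiplicities $N_{\alpha\beta}^\gamma$. Likewise $\chi_\alpha^*=\chi_{\overline\alpha}$. I would first record that for $\g=O_N^+$ ($N\ge2$) one has $\mathrm{Irr}(\g)=\{0\}\cup\mathbb N$ with $\chi^1_m\chi^1_n=\sum_{k=0}^{\min(m,n)}\chi^1_{m+n-2k}$ and each $\chi^1_n$ self-adjoint — exactly the Clebsch–Gordan rules of $SU(2)$ with $\chi^2_n$ the character of the spin-$n/2$ representation; and for $S_{N+2}^+$ the same rules match those of $SO(3)$. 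Hence $\Phi$ extends to a unital $*$-algebra isomorphism from $\mathrm{span}\{\chi^1_n\}$ onto $\mathrm{span}\{\chi^2_n\}$.

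\emph{Step 2: matching the traces.} The Haar state restricted to characters is $h(\chi_\alpha)=\delta_{\alpha,0}$ for any compact quantum group, so $h_\g\circ\Phi^{-1}=h_G$ on $\mathrm{span}\{\chi^2_n\}$. Therefore $\Phi$ is a trace-preserving $*$-isomorphism between the unital $*$-algebras $\mathrm{span}\{\chi^1_n\}$ and $\mathrm{span}\{\chi^2_n\}$, which extends to a normal $*$-isomorphism $Z(\g)\to Z(G)$ carrying $h_\g$ to $h_G$. For any $f=\sum_{n}c_n\chi^1_n$ a finite sum one then computes $\|f\|_{L^p(\g)}^p=h_\g(|f|^p)=h_\g(\Phi^{-1}(|\Phi(f)|^p))=h_G(|\Phi(f)|^p)=\|\Phi(f)\|_{L^p(G)}^p$, using that $\Phi$ intertwines the absolute values and the continuous functional calculus (being a $*$-isomorphism). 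The cases $p=\infty$ and $p=1$ follow from the $C^*$- and predual identifications, and $1<p<\infty$ either directly by the trace formula or by complex interpolation between $p=1$ and $p=\infty$. Finally, one passes from finite sums to general $f\in L^p(\g)$ of the stated central form by density of $\mathrm{span}\{\chi^1_n\}$ in the closure of the central part of $L^p(\g)$.

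\emph{Main obstacle.} The only genuinely delicate point is verifying that the fusion rules of $S_{N+2}^+$ really do reproduce those of $SO(3)$ at the level that makes $\Phi$ multiplicative — in particular handling the small cases $N=2$ (where $O_2^+$ is not the generic Gram matrix regime and $n_k=k+1$) uniformly with $N\ge3$. This is resolved by invoking the monoidal equivalence / known description of $\mathrm{Irr}(O_N^+)$ and $\mathrm{Irr}(S_{N+2}^+)$ (Banica's computations), which guarantees the dimension function is irrelevant to the \emph{algebraic} structure of the character ring; traciality then does the rest, so no estimate is actually needed — the identity is exact.
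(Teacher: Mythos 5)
Your proposal is correct and follows essentially the same route as the paper: both rest on the coincidence of the fusion rules of $O_N^+$ (resp.\ $S_{N+2}^+$) with those of $SU(2)$ (resp.\ $SO(3)$), cited in the paper via [Proposition 6.7, \cite{Wa16}], together with the fact that the Haar state on products of characters is determined entirely by those fusion rules. The paper packages this as a direct computation of the moments $h((x^*x)^m)$ followed by Stone--Weierstrass, whereas you package it as a trace-preserving $*$-isomorphism of the character algebras extending to the generated von Neumann algebras --- the same argument in slightly more structural clothing.
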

\begin{proof}
In the above cases, it is known that $\g$ and $G$ share the fusion rule. For details, see [Proposition 6.7, \cite{Wa16}]. Now, for any $x=\displaystyle \sum_{k\geq 0}c_k\chi^1_k\in Pol(\g)$ and $m\in \n$,
\begin{align*}
h((x^*x)^m)&=h((\sum_{k,l\geq 0}\overline{c_k}c_l\chi^1_{k}\chi^1_l)^m)\\
&=\sum_{k_1,l_1,\cdots,k_m,l_m\geq 0}\overline{c_{k_1}\cdots c_{k_m}}c_{l_1}\cdots c_{l_m}h(\chi_{k_1}^1\chi^1_{l_1}\cdots \chi^1_{k_m}\chi^1_{l_m})\\
&=\sum_{k_1,l_1,\cdots,k_m,l_m\geq 0}\overline{c_{k_1}\cdots c_{k_m}}c_{l_1}\cdots c_{l_m}\int_{G}\chi_{k_1}^2\chi^2_{l_1}\cdots \chi^2_{k_m}\chi^2_{l_m}\\
&=\int_{G}(\sum_{k,l\geq 0}\overline{c_k}c_l\chi^2_k\chi^2_l)^m\\
&=\int_{G}|x'|^{2m},
\end{align*}
where $x'=\displaystyle \sum_{k\geq 0}c_k \chi^2_k\in Pol(G)$. Then the Stone-Weistrass theorem completes this proof.
\end{proof}

\begin{corollary}\label{main-equiv.}

Let $N\geq 2$, $\displaystyle \frac{3}{2}< p\leq 2$ and fix $D>0$. Also, assume that $f\sim\displaystyle \sum_{k\geq 0}c_k\chi_k\in L^{\frac{3}{2}}(O_N^+)$ satisfies
\begin{equation}\label{equiv.cond.}
c_k\geq c_{k+1}\geq 0~\mathrm{and~}\sum_{m\geq k}\frac{c_m}{m+1}\leq D\cdot c_k~\mathrm{for~all~}k\geq 0.
\end{equation}
Then we have
\begin{align}\label{equiv.free}
||f||_{L^p(O_N^+)}&\approx (\sum_{k\geq 0}(1+k)^{2p-4}c_k^p)^{\frac{1}{p}}.
\end{align}
\end{corollary}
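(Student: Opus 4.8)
The guiding observation is that, under the monoidal equivalence of Lemma \ref{monoidal}, the asserted estimate on $O_N^+$ is nothing but the classical one on $SU(2)$; so my plan is to transplant the problem to $SU(2)$ and there invoke the result of \cite{ANR15}.

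First I would apply Lemma \ref{monoidal} with $\g=O_N^+$ and $G=SU(2)$: since $f\sim\sum_{k\ge 0}c_k\chi_k$ is a central element of $L^{3/2}(O_N^+)$, it produces $\Phi(f)\sim\sum_{k\ge 0}c_k\chi^2_k\in L^{3/2}(SU(2))$ with $||f||_{L^p(O_N^+)}=||\Phi(f)||_{L^p(SU(2))}$ for all $1\le p\le\infty$. The hypotheses $(\ref{equiv.cond.})$ are exactly positivity, monotone decrease, and the non-oscillation condition on $(c_k)$ under which [Theorem 2.10, \cite{ANR15}] gives a two-sided comparison between $||\Phi(f)||_{L^p(SU(2))}$ and a weighted $\ell^p$-norm of $(c_k)$. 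To see that the weight matches, recall that $SU(2)$ has real dimension $3$, the Casimir eigenvalue on the $(k+1)$-dimensional irreducible is $\kappa_k\approx k^2$, and for the central $\Phi(f)$ one has $\widehat{\Phi(f)}(k)=\tfrac{c_k}{k+1}Id_{k+1}$, hence $n_k||\widehat{\Phi(f)}(k)||_{S^p_{n_k}}^p=(k+1)^{2-p}c_k^p$; thus the weighted term $(1+\kappa_k)^{-3(2-p)/2}(k+1)^{2-p}c_k^p$ is comparable to $(1+k)^{2p-4}c_k^p$. Therefore [Theorem 2.10, \cite{ANR15}] yields $||\Phi(f)||_{L^p(SU(2))}\approx(\sum_{k\ge 0}(1+k)^{2p-4}c_k^p)^{1/p}$ for $\tfrac{3}{2}<p\le 2$, and composing with the isometry $\Phi$ completes the proof.

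It is worth recording that the easier half $(\sum_k(1+k)^{2p-4}c_k^p)^{1/p}\lesssim ||f||_{L^p(O_N^+)}$ requires none of the structural hypotheses $(\ref{equiv.cond.})$ and follows directly from the Hardy--Littlewood inequalities of Theorem \ref{main-Free.Ortho.} applied to the central $f$: here $\widehat f(k)=\tfrac{c_k}{n_k}Id_{n_k}$, so $n_k||\widehat f(k)||_{S^p_{n_k}}^p=n_k^{2-p}c_k^p$, and the weight appearing in $(\ref{HL-Free.Ortho.-2})$ for $N\ge 3$ (respectively in $(\ref{HL-Free.Orth.-1})$ for $N=2$, via Corollary \ref{poly1} with $\gamma=3$) absorbs the factor $n_k^{2-p}$, leaving exactly $\sum_k(1+k)^{2p-4}c_k^p$.

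The genuine content, which I would treat as a black box rather than reprove, is the converse inequality on $SU(2)$, namely $||g||_{L^p(SU(2))}\lesssim(\sum_k(1+k)^{2p-4}c_k^p)^{1/p}$ for $g\sim\sum_k c_k\chi^2_k$ obeying $(\ref{equiv.cond.})$. Were one to prove it from scratch, the natural route is the Weyl integration formula $||\sum_k c_k\chi^2_k||_{L^p(SU(2))}^p=\tfrac{2}{\pi}\int_0^\pi\big|\sum_{k\ge 0}c_k\sin((k+1)\theta)\big|^p(\sin\theta)^{2-p}\,d\theta$, which reduces the claim to the classical Hardy--Littlewood estimate for a one-dimensional sine series with monotonically decreasing coefficients against the weight $\theta^{2-p}$; that estimate, $\int_0^\pi|\sum_k c_k\sin((k+1)\theta)|^p\theta^{2-p}\,d\theta\approx\sum_k(1+k)^{2p-4}c_k^p$, holds precisely in the range $\tfrac{3}{2}<p<3$, which is exactly where the hypothesis $p>\tfrac{3}{2}$ (together with the standing assumption $f\in L^{3/2}$ and the non-oscillation condition $\sum_{m\ge k}\tfrac{c_m}{m+1}\le D c_k$) becomes essential. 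By contrast, the reduction from $O_N^+$ to $SU(2)$ through Lemma \ref{monoidal} is entirely routine.
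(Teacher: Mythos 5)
Your proposal follows exactly the route the paper intends: the paper gives no separate proof of this corollary, stating only that it follows by combining Lemma \ref{monoidal} (which transports central $L^p$-norms from $O_N^+$ to $SU(2)$ via the shared fusion rules) with [Theorem 2.10, \cite{ANR15}] for $SU(2)$, and your weight bookkeeping $(1+\kappa_k)^{-3(2-p)/2}(k+1)^{2-p}c_k^p\approx(1+k)^{2p-4}c_k^p$ is correct. Your additional remarks on deriving the easy half from Theorem \ref{main-Free.Ortho.} and on the Weyl-integration reduction are accurate but go beyond what the paper records.
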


\section{A strong Hardy-Littlewood inequality}

The studies of Hardy-Littlewood inequalities in \cite{ANR15}, \cite{HL27} and \cite{HR74} deal with general $L^p$-functions, but a plenty of classical results of harmonic analysis on $\tor$ shows that a theorem on a function space can have a stronger form when restricted to ``holomorphic'' setting \cite{KS07}.

An evidnce on non-commutative setting is ``the strong Haagerup inequality'' on the reduced group $C^*$-algebras $C_r^*(\mathbb{F}_N)$. More precisely, it was shown that the rapid decay property can be strengthen in general holomorphic setting \cite{KS07}.

Let $g_1,\cdots,g_N$ be canonical generators of $\mathbb{F}_N$ and denote by $\mathbb{F}_N^+$ the the set of elements of the form $g_{i_1}g_{i_2}\cdots g_{i_m}$ with $m\in \left \{0\right\}\cup \n$ and $1\leq i_k\leq N$ for all $1\leq k\leq m$.

\begin{theorem}(Strong Haagerup inequality for $C_r^*(\mathbb{F}_N)$)

Consider a subset $E:=\mathbb{F}_N^+$ and $E_k:=\left \{g\in E:|g|=k\right\}$. Then, for any $k\in \left \{0\right\}\cup \n$, we have that
\[||\sum_{g\in E_k}f(g)\lambda_g||_{C_r^*(\mathbb{F}_N)}\leq \sqrt{e}\sqrt{k+1}(\sum_{g\in E_k}|f(g)|^2)^{\frac{1}{2}}.\]
\end{theorem}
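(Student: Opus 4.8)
**Proof plan for the Strong Haagerup inequality for $C_r^*(\mathbb{F}_N)$.**

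The plan is to follow the moment method of Haagerup--Pisier, but exploiting the crucial combinatorial advantage of working inside the ``holomorphic'' subset $\mathbb{F}_N^+$: if $g = g_{i_1}\cdots g_{i_m}$ and $h = g_{j_1}\cdots g_{j_n}$ are both positive words, then $g^{-1}h$ is a positive word precisely when one of $g,h$ is a prefix of the other, and there is \emph{no cancellation in the middle}. This rigidity is what upgrades the $\sqrt{k+1}$ in the Haagerup inequality (where one sees $(k+1)$-many ways to split a length-$k$ word) to the better constant $\sqrt{e}\sqrt{k+1}$ here. Write $x = \sum_{g\in E_k} f(g)\lambda_g$; since $\|x\|_{C_r^*(\mathbb{F}_N)} = \lim_{m\to\infty} \|(x^*x)^m\|_2^{1/(2m)} = \lim_{m\to\infty} \tau((x^*x)^m)^{1/(2m)}$, it suffices to bound $\tau\big((x^*x)^m\big)$ for every $m$, where $\tau$ is the canonical trace on $VN(\mathbb{F}_N)$.

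First I would expand $\tau\big((x^*x)^m\big) = \sum \overline{f(g_1)} f(h_1)\cdots \overline{f(g_m)} f(h_m)$, the sum being over all $2m$-tuples of elements of $E_k$ for which $g_1^{-1}h_1 g_2^{-1}h_2 \cdots g_m^{-1}h_m = e$ in $\mathbb{F}_N$. The key step is to count such tuples. Since all $g_i, h_i$ are positive words of length $k$, analyze the cancellation pattern in the alternating product: at each junction $h_i^{-1}\!\cdots$ meeting $g_{i+1}^{-1}$, the reduced form forces a nested bracketing structure on the $2mk$ letters, which can be encoded by a non-crossing pair partition (a Catalan-type structure) together with the data of a function assigning letters. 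The holomorphy of $E$ is what makes the matching \emph{non-crossing} rather than arbitrary — this is the heart of the improvement. I would then apply Cauchy--Schwarz (or Hölder) over the free indices to bound the sum by $\big(\sum_{g\in E_k}|f(g)|^2\big)^m$ times the number of admissible non-crossing structures, call it $D(m,k)$.

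The main obstacle — and the only genuinely technical part — is obtaining a sharp enough estimate on $D(m,k)$, namely $D(m,k) \le (k+1)^m \cdot C_m$ where $C_m$ is the $m$-th Catalan number (or a similar bound with $C_m \le 4^m$), so that $\tau((x^*x)^m)^{1/(2m)} \le (k+1)^{1/2} \cdot C_m^{1/(2m)} \cdot \big(\sum|f(g)|^2\big)^{1/2}$ and letting $m\to\infty$ with $C_m^{1/(2m)}\to \sqrt{4}=2$ would only give constant $2$, not $\sqrt e$. To get the sharp $\sqrt e$ one must instead track the combinatorics more carefully: the correct count yields something like $D(m,k) \le \frac{((k+1)m)!}{(km)!\,m!}\cdot\frac{1}{?}$, and then use the asymptotics $\big(\tfrac{((k+1)m)!}{(km)!\,m!}\big)^{1/m} \to \tfrac{(k+1)^{k+1}}{k^k} \le e(k+1)$ as $m\to\infty$ (by Stirling), where the inequality $(1+1/k)^k \le e$ supplies exactly the $\sqrt e$. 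So concretely: (i) set up the moment expansion; (ii) translate the word-cancellation constraint into a count of non-crossing/prefix-compatible configurations; (iii) bound that count by a product of a binomial-type quantity and $(k+1)^{\text{stuff}}$; (iv) take $2m$-th roots, send $m\to\infty$, and invoke Stirling together with $(1+1/k)^k\le e$. The payoff of restricting to $\mathbb{F}_N^+$ appears in step (ii), and the constant $\sqrt e$ emerges in step (iv); everything else is bookkeeping.
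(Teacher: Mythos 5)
First, a point of comparison: the paper does not prove this statement at all --- it is imported verbatim from Kemp and Speicher \cite{KS07}, where it is the special case, for free Haar unitaries, of the strong Haagerup inequality for $*$-free R-diagonal families. Your plan is, in outline, a faithful reconstruction of that proof: the moment method $\|x\|=\lim_{m}\tau\bigl((x^*x)^m\bigr)^{1/2m}$, the observation that holomorphy (all words positive) rigidifies the cancellation pattern into a non-crossing structure, and the extraction of the constant from the limit $\frac{(k+1)^{k+1}}{k^k}=(k+1)\bigl(1+\tfrac1k\bigr)^k\le e(k+1)$. You also correctly diagnose that a crude Catalan bound only yields the constant $2\sqrt{k+1}$, which is strictly worse than $\sqrt{e}\sqrt{k+1}$, and that the sharp constant must come from Fuss--Catalan-type asymptotics; the missing denominator in your guessed formula is $km+1$, i.e.\ the relevant count is (up to harmless factors) the Fuss--Catalan number $\frac{1}{km+1}\binom{(k+1)m}{m}$, whose $m$-th root tends to $(k+1)^{k+1}/k^k$ by Stirling, exactly as you say.

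The genuine gap is your steps (ii)--(iii), which you flag but do not carry out: you must actually prove that the admissible cancellation diagrams are non-crossing, that they are counted by a Fuss--Catalan quantity, and that the Cauchy--Schwarz over the free indices yields exactly $\bigl(\sum_{g\in E_k}|f(g)|^2\bigr)^m$ with no additional $k$-dependent loss. This is where essentially all of the work in \cite{KS07} resides, and a bare word-by-word cancellation analysis in $\mathbb{F}_N$ is delicate because letters repeat within a word (also, your parenthetical is slightly off: for positive $g,h$ the reduced form of $g^{-1}h$ is always of the shape (inverse of a positive word)(positive word), not itself a positive word unless $g$ is a prefix of $h$). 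The clean way to execute step (ii) is the one \cite{KS07} uses: treat $\lambda_{g_1},\dots,\lambda_{g_N}$ as $*$-free Haar unitaries and expand $\tau\bigl((x^*x)^m\bigr)$ via free cumulants, which are supported on alternating non-crossing partitions; holomorphy then forbids pairings internal to a single factor, forcing the block structure whose enumeration is Fuss--Catalan. With that lemma supplied, your steps (i) and (iv) are correct and complete, so the route is right but the combinatorial core remains to be filled in.
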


Based on this information, we can modify the inequality (\ref{Tool1}) as follows.
\begin{proposition}
Let $N\geq 2$. Then we have that
\begin{equation}\label{Tool2}
||f||_{A(\mathbb{F}_N)}\geq \frac{1}{\sqrt{e}}\sup_{k\geq 0}\frac{(\sum_{g\in E_k}|f(g)|^2)^{\frac{1}{2}}}{(k+1)^{\frac{1}{2}}}~\mathrm{for~all~}f\in A(\mathbb{F}_N).
\end{equation}
\end{proposition}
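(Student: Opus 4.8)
The plan is to mimic the duality argument used in the proof of Proposition~\ref{prop.rd.}, but feeding in the strong Haagerup inequality on the holomorphic subset $E=\mathbb{F}_N^+$ in place of the ordinary rapid decay bound. Recall that $A(\mathbb{F}_N)$, the Fourier algebra, is the predual of $VN(\mathbb{F}_N)$ and can be identified isometrically with $L^1(VN(\mathbb{F}_N))$ via the tracial Haar state; its norm is $\|f\|_{A(\mathbb{F}_N)}=\sup\{|\langle f,x\rangle|:x\in Pol,\ \|x\|_{C_r^*(\mathbb{F}_N)}\leq 1\}$, where the pairing is $\langle f,x\rangle=\sum_{g\in G}f(g)\overline{x(g)}$ (after replacing $x$ by $x^*$ as in Proposition~\ref{prop.rd.}).

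First I would fix $k\geq 0$ and restrict the supremum to test elements $x$ supported on $E_k=\{g\in\mathbb{F}_N^+:|g|=k\}$. For such $x$, the strong Haagerup inequality gives $\|x\|_{C_r^*(\mathbb{F}_N)}\leq\sqrt{e}\sqrt{k+1}\,(\sum_{g\in E_k}|x(g)|^2)^{1/2}$, so the constraint $\|x\|_{C_r^*(\mathbb{F}_N)}\leq 1$ is implied by $\sqrt{e}\sqrt{k+1}\,(\sum_{g\in E_k}|x(g)|^2)^{1/2}\leq 1$. Enlarging the feasible set can only increase the supremum, hence
\[
\|f\|_{A(\mathbb{F}_N)}\ \geq\ \sup_{\substack{x\ \mathrm{supp.\ on}\ E_k\\ \sqrt{e}\sqrt{k+1}\,(\sum_{g\in E_k}|x(g)|^2)^{1/2}\leq 1}}\ \Big|\sum_{g\in E_k}f(g)\overline{x(g)}\Big|.
\]
Then I would evaluate the inner supremum by Cauchy--Schwarz on the finite-dimensional Hilbert space $\ell^2(E_k)$: it equals $\frac{1}{\sqrt{e}\sqrt{k+1}}(\sum_{g\in E_k}|f(g)|^2)^{1/2}$, attained at $x=c\,f|_{E_k}$ for the appropriate scalar $c$. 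Taking the supremum over all $k\geq 0$ yields the claimed inequality.

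Two small technical points deserve a line each. One must check that the test functions $x$ used are genuinely in $Pol(\mathbb{F}_N)$ (they are, being finitely supported) and that restricting $f$ to $E_k$ is legitimate even when $f\in A(\mathbb{F}_N)$ is only $\ell^2$-summable on each length sphere --- this follows because $A(\mathbb{F}_N)\subseteq \ell^2(G)$ by the Plancherel identity, so $\sum_{g\in E_k}|f(g)|^2<\infty$ for every $k$. I do not expect any serious obstacle here: the argument is a clean one-sided duality estimate, and the only ingredient beyond bookkeeping is the strong Haagerup inequality, which is quoted as the preceding theorem. If anything, the mildly delicate step is making sure the pairing $\langle f,x\rangle$ and the passage $x\mapsto x^*$ are handled exactly as in Proposition~\ref{prop.rd.}, so that signs and conjugates match and the supremum over $x$ with $\|x\|_{C_r^*(\mathbb{F}_N)}\leq 1$ really does compute $\|f\|_{A(\mathbb{F}_N)}$.
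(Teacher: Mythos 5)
Your argument is correct and is essentially the paper's own proof: the paper simply says to repeat the duality argument of Proposition \ref{prop.rd.}, replacing the rapid-decay bound $(1+k)^{\beta}$ by the strong Haagerup bound $\sqrt{e}(1+k)^{1/2}$ and restricting the test elements $x$ to be supported on $\mathbb{F}_N^+$, which is exactly what you carry out. (One harmless slip: $A(\mathbb{F}_N)\not\subseteq\ell^2(\mathbb{F}_N)$ in general, but this is irrelevant since each $E_k$ is finite, so $\sum_{g\in E_k}|f(g)|^2$ is automatically finite.)
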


\begin{proof}
We can repeat the proof of Proposition \ref{prop.rd.}. The only difference is the improvement of $(1+k)^{\beta}$ to $(1+k)^{\frac{1}{2}}$ in inequality (\ref{tool2}). Then we are able to get conclusion by restricting support of $x\in C_c(G)$ to $\mathbb{F}_N^+$ in the proof.
\end{proof}

\begin{theorem}\label{strong1}
Let $N\geq 2$. Then, for each $1<p\leq 2$, there exists a universal constant $K=K(p)>0$ such that
\begin{align}
(\sum_{g\in \mathbb{F}_N}\frac{1}{(1+|g|)^{\frac{3}{2}(2-p)}N^{\frac{(2-p)|g|}{2}}}|f(g)|^p)^{\frac{1}{p}}&\leq (\sum_{k\geq 0}\frac{1}{(1+k)^{\frac{3}{2}(2-p)}}(\sum_{g\in \mathbb{F}_N:|g|=k}|f(g)|^2)^{\frac{p}{2}})^{\frac{1}{p}}\\
\notag&\leq K||\lambda(f)||_{L^p(VN(\mathbb{F}_N))}
\end{align}
for all $\lambda(f)\sim \displaystyle \sum_{g\in \mathbb{F}_N}f(g)\lambda_g\in L^p(VN(\mathbb{F}_N))$ with $supp(f)\subseteq \mathbb{F}_N^+$.
\end{theorem}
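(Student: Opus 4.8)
The plan is to mimic exactly the Paley-type argument used in the proof of Theorem~\ref{paley-CQG-rd}, but with the role of the rapid decay inequality (\ref{Tool1}) replaced by its strong-Haagerup improvement (\ref{Tool2}); in effect this amounts to running the proof of Corollary~\ref{paley-CQG-RD0} with $\beta=\frac12$ rather than $\beta=1$, restricted to functions supported on $\mathbb{F}_N^+$. Concretely, I would first set up the sub-linear operator $A$ acting on $f$ with $\mathrm{supp}(f)\subseteq\mathbb{F}_N^+$ by $f\mapsto\bigl(\|p_k(f)\|_{L^2}/w(k)\bigr)_{k\ge 0}$, where $p_k$ is the projection onto the span of $\{\lambda_g:g\in\mathbb{F}_N^+,\ |g|=k\}$, and $w(k):=\tfrac{1}{(1+k)^{3/2}}$, together with the counting-type measure $\nu(k):=w(k)^2$.

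The two endpoint estimates go through essentially verbatim. At the $L^2$ end, the Plancherel identity $\sum_k\|p_k(f)\|_{L^2}^2=\|f\|_{L^2}^2$ gives strong type $(2,2)$ with constant $1$. At the $L^1$ end, using (\ref{Tool2}) — i.e. $\|f\|_{A(\mathbb{F}_N)}\ge\tfrac{1}{\sqrt e}\sup_k (k+1)^{-1/2}\|p_k(f)\|_{L^2}$, which is the $\beta=\tfrac12$ version of Proposition~\ref{prop.rd.} for holomorphic $f$ — one bounds the distribution function
\[
\sum_{k:\,(Af)(k)>y}\nu(k)\ \le\ \sum_{k:\,\widetilde w(k)<\sqrt e\,\|f\|_{L^1}/y}w(k)^2,
\qquad \widetilde w(k):=\frac{w(k)}{(k+1)^{1/2}}=\frac{1}{(1+k)^{2}},
\]
then converts the sum to an integral and applies Fubini exactly as in the proof of Theorem~\ref{paley-CQG-rd}, checking the finiteness condition $C_w=\sup_{y>0}\{y\cdot\sum_{k:\,(1+k)^{2}\le 1/y}(1+k)\}<\infty$, which holds since $\int_1^{T}t\,dt\approx T^2/2$ and here $T=(1/y)^{1/2}$. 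This yields weak type $(1,1)$, so the non-commutative Marcinkiewicz interpolation theorem (Corollary~\ref{Mar3}) gives
\[
\Bigl(\sum_{k\ge 0}\frac{1}{(1+k)^{\frac32(2-p)}}\,\|p_k(f)\|_{L^2(VN(\mathbb{F}_N))}^p\Bigr)^{1/p}\ \lesssim\ \|\lambda(f)\|_{L^p(VN(\mathbb{F}_N))}
\]
for $\mathrm{supp}(f)\subseteq\mathbb{F}_N^+$. Since $VN(\mathbb{F}_N)$ is of Kac type and abelian in the Fourier sense, $\|p_k(f)\|_{L^2}^2=\sum_{g:|g|=k}|f(g)|^2$, giving the middle quantity of the claimed chain.

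Finally, for the first inequality (the honest weighted $\ell^p$ of the coefficients on the left), I would argue as in Corollary~\ref{paley-CQG-RD}: on each level set $E_k=\{g\in\mathbb{F}_N^+:|g|=k\}$ apply Hölder with exponents $2/p$ and $2/(2-p)$ to pass from $\ell^p(E_k)$ to $\ell^2(E_k)$, picking up a factor $(\#E_k)^{(2-p)/2}$; since the free monoid satisfies $\#E_k\le N^k$, this factor is absorbed into the weight $N^{-(2-p)k/2}$, matching the stated left-hand side. The only genuinely new ingredient over Section~\ref{Sec3} is the use of (\ref{Tool2}) in place of (\ref{Tool1}); everything else is bookkeeping, and I expect the main (mild) obstacle to be nothing more than verifying that restricting to $\mathbb{F}_N^+$ is harmless throughout — in particular that $p_k$ restricted to holomorphic elements is still a well-defined contraction on $L^2$ compatible with the duality $A(\mathbb{F}_N)$–$VN(\mathbb{F}_N)$ used to invoke (\ref{Tool2}) — which is immediate because $\mathbb{F}_N^+$ is closed under the relevant operations and the strong Haagerup inequality already supplies the needed one-sided bound.
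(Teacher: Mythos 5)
Your argument is correct and is essentially the paper's own proof: the paper likewise obtains the result by rerunning Theorem \ref{paley-CQG-rd} and Corollary \ref{paley-CQG-RD} with $w(k)=(1+k)^{-3/2}$ and with $(1+k)^{\beta}$ replaced by $(1+k)^{1/2}$ via the strong Haagerup bound (\ref{Tool2}), the final H\"older step on each sphere using $\#E_k=N^k$. The only point worth making explicit is the one you flag at the end: the sublinear operator is best taken as $f\mapsto(\|f\cdot\chi_{E_k}\|_{\ell^2}/w(k))_{k\ge 0}$ on all of $L^1+L^2$ (the weak $(1,1)$ estimate from (\ref{Tool2}) holds for \emph{every} $f\in A(\mathbb{F}_N)$, since the duality there only tests against holomorphic elements), so that the Marcinkiewicz decomposition --- which need not preserve holomorphy --- causes no trouble, and the restriction $\mathrm{supp}(f)\subseteq\mathbb{F}_N^+$ is imposed only at the very end, exactly as in the paper.
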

\begin{proof}
It can be also obtained by repeating the proof of Theorem \ref{paley-CQG-rd} and Corollary \ref{paley-CQG-RD}. The only difference is to replace the operator $A$ with $\displaystyle \lambda(f)\mapsto (\frac{||f\cdot \chi_{E_k}||_{l^2(G))}}{w(k)^2})_{k\geq 0}$ and $(1+k)^{\beta}$ with $(1+k)^{\frac{1}{2}}$. Also, we choose a  weight function $w$ on $\left \{0\right\}\cup \n$ by $w(k):=\displaystyle \frac{1}{(1+k)^{\frac{3}{2}}}$. Then we can derive new inequality for general $\lambda(f)\in L^p(VN(\mathbb{F}_N))$, but our consideration is in the case $supp(f)\subseteq \mathbb{F}_N^+$.
\end{proof}

\section{Sharpness}\label{Sharp}

The studied Hardy-Littlewood inequalities give a decay pair $(r,s)$ such that the multiplier
\[\mathcal{F}_{w_{r,s}}:L^p(\g)\rightarrow \ell^p(\widehat{\g}),~f\mapsto (w_{r,s}(\alpha)\widehat{f}(\alpha))_{\alpha\in Irr(\g)},\]
is bounded for each cases, where $w_{r,s}(\alpha)=\displaystyle \frac{1}{r^{|\alpha|}(1+|\alpha|)^s}$ with respect to the natural length $|\cdot|$ on Irr($\g$). Here is the list: $\displaystyle(0,\frac{n(2-p)}{p})$ for $C(G)$ with compact Lie group $G$, $\displaystyle(0,\frac{k_0(2-p)}{p})$ for $C_r^*(G)$ with polynomially growing discrete group $G$, $\displaystyle(0,\frac{3(2-p)}{p})$ for $O_2^+$ or $S_4^+$ and $\displaystyle (r_0^{\frac{2-p}{p}},\frac{2(2-p)}{p})$ for $O_N^+$ or $S_{N+2}$ with $N\geq 3$.

\begin{remark}\label{rmk1}(\cite{LY15} and \cite{Wa73})
If $G$ is a compact Lie group, then $\sqrt{\kappa_{\pi}}$ is equivalent to the natural length function $||\cdot ||_S$ generated by the fundamental generating set $S$ of $\widehat{G}$. Equivalently, $(1+\kappa_{\pi})^{\frac{\beta}{2}}\approx (1+||\pi||_S)^{\beta}$. 
\end{remark}

To assert that the established inequalities are sharp, we will show that there is no slower decay pair $(r,s)$ such that $\mathcal{F}_{w_{r,s}}$ is bounded for the above cases.

This viewpoint is different from the spirit of [Theorem 2.10 ,\cite{ANR15}] or Theorem \ref{main-equiv.}, which requires finding an equivalence on a subclass. However, our approach is quite natural since it is strongly related to Sobolev embedding theorem. For example, $\mathcal{F}_{w_{0,s}}:L^p(\tor^d)\rightarrow l^p(\z^d)$ is bounded if and only if $H^s_p(\tor^d)\subseteq L^{p'}(\tor^d)$ if and only if $H^{\frac{ps}{2-p}(\frac{1}{q}-\frac{1}{r})}_q(\tor^d)\subseteq L^r(\tor^d)$ for all $1<q<r<\infty$, where $H^s_p$ is the Bessel potential space. In this direction, we will provide a Sobolev embedding type interpretation for results of this section in subsection \ref{sobolev}.

In addition, this view has a definite advantage over looking for equivalence because we can cover much larger class.

Our first strategy is handling an ultracontractivity problem on $C(G)$ with compact Lie groups, $C_r^*(G)$ with polynomially growing discrete group. Actually ultracontractivity problem is strongly related to Sobolev embedding property \cite{Xi16}.

Let $M$ be the von Neumann subalgebra generated by $\left \{\chi_{\alpha}\right\}_{\alpha\in \mathrm{Irr}(\g)}$ in $L^{\infty}(\g)$ and consider $L^p(M)$ as the non-commutative $L^p$-space associated to the restriction of the Haar state on $M$. Now suppose that $l:\mathrm{Irr}(\g)\rightarrow (0,\infty)$ is a positive function and there exist $1<p<2$ and a universal constant $C>0$ such that
\begin{equation}\label{sobolev1}
||J(f)\sim \sum_{\alpha\in \mathrm{Irr}(\g)}\frac{1}{l(\alpha)^{\frac{\beta}{p}}}c_{\alpha} \chi_{\alpha}||_{L^{p'}(M)}\leq C||f\sim \sum_{\alpha\in \mathrm{Irr}(\g)}c_{\alpha}\chi_{\alpha}||_{L^p(M)},
\end{equation}
where $J$ is a densely defined positive operator on $L^2(M)$ which maps $\chi_{\alpha}\mapsto \displaystyle \frac{1}{l(\alpha)^{\frac{\beta}{p}}}\chi_{\alpha}$ for all $\alpha\in \mathrm{Irr}(\g)$, $1\leq i,j\leq n_{\alpha}$. Indeed, $J=K^*K$ where $K:\chi_{\alpha}\mapsto \displaystyle \frac{1}{l(\alpha)^{\frac{\beta}{2p}}}\chi_{\alpha}$.

Now take $\phi(t):=t^{\frac{2\beta}{2-p}}$, $\psi(z):=z^{\frac{2\beta}{2-p}}$ and $L:=J^{-\frac{p}{2\beta}}$. Then [Theorem 1.1, \cite{Xi16}] says that there exists a universal constant $C'>0$ such that
\begin{equation}\label{ultra}
||e^{-tL}(f)\sim \sum_{\alpha\in \mathrm{Irr}(\g)}\frac{c_{\alpha}}{e^{tl(\alpha)^{\frac{1}{2}}}}\chi_{\alpha}||_{L^{\infty}(M)}\leq C'\frac{||f||_{L^1(M)}}{t^{\frac{2\beta}{2-p}}}.
\end{equation}
for all $0<t<\infty$ and all $f\sim \displaystyle \sum_{\alpha\in \mathrm{Irr}(\g)}c_{\alpha}\chi_{\alpha} \in L^1(M)$.

Our claim is that we get the following observations in the above situation:

\begin{equation}\label{want}
\sup_{0<t<\infty}\left \{t^{\frac{2\beta}{2-p}}\sum_{\alpha\in \mathrm{Irr}(\g)}\frac{n_{\alpha}^2}{e^{tl(\alpha)^{\frac{1}{2}}}}\right\}=:C<\infty.
\end{equation}
if $\g$ is given by $C(G)$ with compact Lie group or $C_r^*(G)$ with polynomially growing discrete group.

\begin{lemma}([Lemma 4.1, \cite{DR14}] and [Proposition 5.7, \cite{LY15}])\label{conv.cond.}
\begin{enumerate}
\item Let $G$ be a compact Lie group with dimension $n$. Then $\displaystyle \sum_{\pi\in \widehat{G}}\frac{n_{\pi}^2}{(1+\kappa_{\pi})^{\frac{s}{2}}}<\infty$ if and only if $s>n$.
\item Let $G$ be a finitely generated discrete group with polynomial growth rate $k_0$. Then $\displaystyle \sum_{g\in G} \frac{1}{(1+|g|)^{s}}<\infty$ if and only if $s>k_0$.
\end{enumerate}
\end{lemma}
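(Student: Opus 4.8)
The goal is Lemma~\ref{conv.cond.}, whose two parts are, respectively, convergence criteria for the ``zeta-type'' series $\sum_{\pi\in\widehat G} n_\pi^2 (1+\kappa_\pi)^{-s/2}$ on a compact Lie group and $\sum_{g\in G}(1+|g|)^{-s}$ on a polynomially growing discrete group. In both cases the statement asserts that the threshold of convergence is exactly the ``dimension'' of the object ($n$ for the Lie group, $k_0$ for the group), so the proof in each case amounts to obtaining matching upper and lower bounds for the counting function appearing in the series, then summing by parts (Abel summation / dyadic decomposition).

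For part (2) the plan is entirely elementary. Write $a_k := \#\{g\in G : |g|_S = k\}$ and $b_k := \#\{g : |g|_S\le k\} = \sum_{j\le k} a_j$. By definition of the polynomial growth rate $k_0$ we have $b_k \le C(1+k)^{k_0}$ for all $k$, and by minimality of $k_0$ we have $b_k \gtrsim (1+k)^{k_0-\eps}$ along a subsequence for every $\eps>0$ — in fact, by the Bass--Guivarc'h theorem the growth is \emph{exactly} polynomial of degree $k_0$, i.e.\ $c(1+k)^{k_0}\le b_k\le C(1+k)^{k_0}$, but one only needs the weaker two-sided bound up to $\eps$ in the exponent for the ``if and only if''. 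Then $\sum_{g\in G}(1+|g|)^{-s} = \sum_{k\ge 0} a_k (1+k)^{-s}$, and Abel summation against $(1+k)^{-s}$ (whose differences are $\asymp (1+k)^{-s-1}$) turns this into $\asymp \sum_k b_k (1+k)^{-s-1} \asymp \sum_k (1+k)^{k_0-s-1}$, which converges precisely when $k_0 - s - 1 < -1$, i.e.\ $s>k_0$. The reverse implication (divergence when $s\le k_0$) follows from the lower bound $b_k\gtrsim (1+k)^{k_0}$ in the same summation-by-parts estimate.

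For part (1) the structure is the same but one needs the Lie-group input. Here one uses Weyl's dimension formula together with the lattice-point count in the dominant Weyl chamber: the number of $\pi\in\widehat G$ with $\sqrt{\kappa_\pi}\le R$ grows like $R^{\dim G}$ after weighting by $n_\pi^2$ — more precisely, $\sum_{\pi:\sqrt{\kappa_\pi}\le R} n_\pi^2 \asymp R^{\dim G}$, which is a classical fact (it is, up to constants, the Weyl law / the growth of the dimension of $L^2(G)_{\le R}$; cf.\ Remark~\ref{rmk1} identifying $\sqrt{\kappa_\pi}$ with the word length $\|\pi\|_S$ on $\widehat G$). Granting this, set $N(R) := \sum_{\sqrt{\kappa_\pi}\le R} n_\pi^2$ and rewrite $\sum_\pi n_\pi^2 (1+\kappa_\pi)^{-s/2} = \int_0^\infty (1+R^2)^{-s/2}\, dN(R)$; integrating by parts and using $N(R)\asymp R^n$ (for $R\ge 1$) gives a series/integral $\asymp \int_1^\infty R^{n-1} R^{-s}\,dR$, finite iff $s>n$. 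This is exactly the dyadic-block argument: on the block $\sqrt{\kappa_\pi}\in[2^j,2^{j+1})$ the summand is $\asymp 2^{-js}$ and the block mass is $\asymp 2^{jn}$, so the series behaves like $\sum_j 2^{j(n-s)}$.

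The main obstacle is purely a matter of invoking — and correctly citing — the two-sided asymptotics for the counting functions: the Bass--Guivarc'h polynomial growth theorem in case (2) and the Weyl-type dimension count $\sum_{\sqrt{\kappa_\pi}\le R} n_\pi^2 \asymp R^{\dim G}$ in case (1). Once these are in hand, both halves reduce to the one-line dyadic estimate $\sum_j 2^{j(d-s)} < \infty \iff s>d$. Since the excerpt attributes the lemma to [Lemma~4.1, \cite{DR14}] and [Proposition~5.7, \cite{LY15}], I would simply cite those for the counting asymptotics and then present the short summation-by-parts deduction; no genuinely new estimate is needed here.
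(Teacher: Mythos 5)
The paper gives no internal proof of this lemma --- it is imported wholesale from [Lemma 4.1, \cite{DR14}] and [Proposition 5.7, \cite{LY15}] --- so there is nothing in the text to compare against; your argument is the standard one underlying those citations (Weyl-law counting $\sum_{\sqrt{\kappa_\pi}\le R}n_\pi^2\asymp R^n$ plus dyadic summation for part (1); exact polynomial growth $b_k\asymp(1+k)^{k_0}$ from Gromov and Bass--Guivarc'h plus Abel summation for part (2)), and it is correct in substance. Two caveats. First, your parenthetical claim that a two-sided bound ``up to $\eps$ in the exponent'' already suffices for the full equivalence is wrong at the endpoint: to get divergence at $s=k_0$ (resp.\ $s=n$) you genuinely need the sharp lower bounds $b_k\gtrsim(1+k)^{k_0}$ and $N(R)\gtrsim R^n$, since a counting function of size $(1+k)^{k_0}/\log^2(2+k)$ would make the series converge at $s=k_0$; so Bass--Guivarc'h (resp.\ the Weyl law) is indispensable, not a convenience. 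Second, ``minimality of $k_0$'' as defined in the paper (minimum over integers $k$ with $b_m\le C(1+m)^k$) only yields that $b_k/(1+k)^{k_0-1}$ is unbounded, not that $b_k\gtrsim(1+k)^{k_0-\eps}$ for every $\eps$. Since you do also invoke the sharp asymptotics, the deduction by summation by parts goes through exactly as you describe, and the endpoint $s=k_0$ (where the partial-sum bound $b_K(1+K)^{-s}\gtrsim 1$ alone is inconclusive) is correctly handled by the Abel/dyadic form of the estimate.
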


\begin{lemma}\label{tool3}
\begin{enumerate}
\item Let $G$ be a compact Lie group. Then there exist probability measures $\left \{\nu_{t}\right\}_{t>0}$ such that $\widehat{\nu_t}(\pi)=\displaystyle \frac{1}{e^{t\kappa_{\pi}}}Id_{n_{\pi}}$ for all $\pi\in \widehat{G}$. Moreover, $\left \{\nu_{t}\right\}_{t>0}\subseteq L^1(G)$.
\item Let $G$ be a compact Lie group and let $f\sim \displaystyle \sum_{\pi\in \widehat{G}}n_{\pi}\mathrm{tr}(\widehat{f}(\pi)\pi)\in L^{\infty}(G)$ such that $\widehat{f}(\pi)\geq 0$ for all $\pi\in \widehat{G}$. Then
\[||f||_{L^{\infty}(G)}=\sum_{\pi\in \widehat{G}}n_{\pi}\mathrm{tr}(\widehat{f}(\pi)).\]
\item Let $G$ be a discrete group. Then $A(G)$ has a bounded approximate identity if and only if $G$ is amenable. In this case, the bounded approximate identity can be chosen as positive and compactly supported functions on $G$.
\item If $G$ is an amenable discrete group, we have that
\[||\lambda(f)\sim \sum_{g\in G}f(g)\lambda_g||_{VN(G)}=\sum_{g\in G}f(g)\]
for any positive function $f\in l^1(G)$.
\end{enumerate}
\end{lemma}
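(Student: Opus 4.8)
The plan is to treat the four statements in two groups: parts (1)--(2) concern compact Lie groups and are most naturally proved together, while parts (3)--(4) concern amenable discrete groups. For (1), I would take $\nu_t$ to be the heat kernel measure $d\nu_t = p_t(g)\,dg$, where $p_t$ is the fundamental solution of $\partial_t u = \Delta u$ on $G$ for the bi-invariant Laplacian. Since $\Delta$ acts on the matrix coefficient $\pi_{i,j}$ by $-\kappa_\pi\pi_{i,j}$, one gets $\widehat{\nu_t}(\pi) = e^{-t\kappa_\pi}\,Id_{n_\pi}$; the kernel $p_t$ is smooth and nonnegative and satisfies $\int_G p_t\,dg = \widehat{\nu_t}(\mathrm{triv}) = 1$, so $\nu_t$ is a probability measure lying in $C^\infty(G)\subseteq L^1(G)$. (Equivalently, one may invoke the Gaussian convolution semigroup on a compact Lie group.)

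For (2), first observe that if $\sum_\pi n_\pi\,\mathrm{tr}(\widehat{f}(\pi)) < \infty$, then, because $\widehat{f}(\pi)\geq 0$ forces $|\mathrm{tr}(\widehat{f}(\pi)\pi(g))|\leq \|\widehat{f}(\pi)\|_{S^1_{n_\pi}} = \mathrm{tr}(\widehat{f}(\pi))$, the Fourier series of $f$ converges uniformly by the Weierstrass $M$-test, so $f$ is continuous and positive definite (a positively weighted sum of the positive-definite functions $g\mapsto\mathrm{tr}(\widehat{f}(\pi)\pi(g))$), whence $\|f\|_{L^\infty(G)} = f(e) = \sum_\pi n_\pi\,\mathrm{tr}(\widehat{f}(\pi))$. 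It remains to exclude the divergent case for an arbitrary $f\in L^\infty(G)$ with $\widehat{f}(\pi)\geq 0$; here I would regularize by convolving with $\nu_t$ from (1): then $\widehat{f\ast\nu_t}(\pi) = e^{-t\kappa_\pi}\widehat{f}(\pi)\geq 0$, $f\ast\nu_t\in C^\infty(G)$ with $\|f\ast\nu_t\|_{L^\infty(G)}\leq \|f\|_{L^\infty(G)}$, so the continuous case applied to $f\ast\nu_t$ gives $\sum_\pi n_\pi e^{-t\kappa_\pi}\mathrm{tr}(\widehat{f}(\pi)) = (f\ast\nu_t)(e)\leq \|f\|_{L^\infty(G)}$; letting $t\downarrow 0$ and applying monotone convergence forces $\sum_\pi n_\pi\,\mathrm{tr}(\widehat{f}(\pi))\leq \|f\|_{L^\infty(G)}<\infty$, returning us to the continuous case.

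For (3), the equivalence ``$A(G)$ has a bounded approximate identity $\iff$ $G$ is amenable'' is Leptin's theorem, which I would cite. For the explicit positive, compactly supported b.a.i.\ when $G$ is amenable and discrete, I would use a F\o lner sequence $(F_n)$: set $\xi_n := |F_n|^{-1/2}\mathbf{1}_{F_n}\in\ell^2(G)$ and $\varphi_n(g) := \langle\lambda_g\xi_n,\xi_n\rangle = |F_n|^{-1}|gF_n\cap F_n|$. Then $\varphi_n\in A(G)$ with $\|\varphi_n\|_{A(G)}\leq\|\xi_n\|_2^2 = 1$, $\varphi_n\geq 0$ since $\xi_n\geq 0$, $\mathrm{supp}(\varphi_n)\subseteq F_nF_n^{-1}$ is finite, and $\varphi_n(g)\to 1$ pointwise by the F\o lner condition; a routine estimate promotes this to the b.a.i.\ property on $A(G)$. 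For (4), the bound $\|\lambda(f)\|_{VN(G)}\leq\|f\|_{\ell^1(G)} = \sum_g f(g)$ is immediate from $\|\lambda_g\| = 1$; for the reverse, pair $\lambda(f)\in VN(G)$ with the $\varphi_n\in A(G) = VN(G)_{*}$ from (3): since $\langle\lambda(f),\varphi_n\rangle = \sum_g f(g)\varphi_n(g)\leq\|\lambda(f)\|\,\|\varphi_n\|_{A(G)}\leq\|\lambda(f)\|$ and $0\leq\varphi_n(g)\uparrow 1$ with $f\geq 0$ summable, dominated convergence gives $\sum_g f(g)\leq\|\lambda(f)\|$. (Alternatively, amenability yields $\|\lambda(f)\|_{C^*_r(G)} = \|f\|_{C^*(G)}\geq\|\pi_{\mathrm{triv}}(f)\| = \sum_g f(g)$.)

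The only genuinely delicate point is the divergent case of (2); everything else is classical (Leptin's theorem, the heat kernel, F\o lner sets) or a one-line computation. The decisive device there is the regularization $f\mapsto f\ast\nu_t$ furnished by part (1), which is exactly why parts (1) and (2) are carried out as a single unit.
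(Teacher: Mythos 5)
Your proposal is correct and follows essentially the same route as the paper: parts (1)--(2) via the heat semigroup $\{\nu_t\}$, regularization $f\mapsto f\ast\nu_t$, evaluation at the identity and a monotone limit $t\downarrow 0$; part (3) via Leptin's theorem (your F\o lner construction just makes the positive, finitely supported approximate identity explicit where the paper truncates); and part (4) via the pairing with that approximate identity, which is precisely the Kesten-type argument the paper invokes. No gaps.
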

\begin{proof}
(1) Since $\displaystyle \sum_{\pi\in \widehat{G}}\frac{n_{\pi}^2}{e^{t\kappa_{\pi}}}<\infty$ by Lemma \ref{conv.cond.}, we know that $\nu_t\in A(G)\subseteq C(G)\subseteq L^1(G)$. The family $\left \{\nu_t\right\}_{t>0}$ is called the Heat semigroup of measures.

(2) Since $f\mapsto \mu_t*f$ is a contractive map on $L^{\infty}(G)$ for all $t>0$ where $*$ is the convolution product, we have
\begin{align*}
||f||_{L^{\infty}(G)}&\geq \sup_{t>0}||f_t\sim \sum_{\pi\in \widehat{G}}\frac{n_{\pi}}{e^{t\kappa_{\pi}}}\mathrm{tr}(\widehat{f}(\pi)\pi)||_{C(G)}
\end{align*}
Here, since $\displaystyle \sum_{\pi}\frac{n_{\pi}}{e^{t\kappa_{\pi}}}\mathrm{tr}(\widehat{f}(\pi))\leq ||f||_{L^1(G)}\sum_{\pi}\frac{n_{\pi}^2}{e^{t\kappa_{\pi}}}<\infty$ by Lemma \ref{conv.cond.}, the Fourier series of $f_t$ uniformly converges to $f_t\in C(G)$. Therefore,
\begin{align*}
||f||_{L^{\infty}(G)}&\geq \sup_{t>0}f_t(1)=\sup_{t>0}\sum_{\pi\in \widehat{G}}\frac{n_{\pi}}{e^{t\kappa_{\pi}}}\mathrm{tr}(\widehat{f}(\pi))=\sum_{\pi\in \widehat{G}}n_{\pi}\mathrm{tr}(\widehat{f}(\pi)).
\end{align*}
The other direction is trivial.

(3) See [Theorem 7.1.3, \cite{Ru02}] and its proof. We also may assume the compact supportness by considering $f_{\epsilon}:=f\cdot \chi_{\left \{g\in G:f(g)>\epsilon\right\}}$ for positive $f\in l^1(G)$.

(4) This is the Kesten's condition that is equivalent to amenability.
\end{proof}

Now we can show that the claim is true.

\begin{proposition}\label{claim1}
Let $\g$ be $C(G)$ with compact Lie group or $C_r^*(G)$ with polyomially growing discrete group. Also, suppose that the inequality (\ref{sobolev1}) holds. Then 
\begin{equation}\label{tool10}
\sup_{0<t<\infty}\left \{t^{\frac{2\beta}{2-p}}\sum_{\alpha\in \mathrm{Irr}(\g)}\frac{n_{\alpha}^2}{e^{tl(\alpha)^{\frac{1}{2}}}}\right\}=:C<\infty.
\end{equation}
\end{proposition}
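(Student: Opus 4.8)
The core difficulty is that (\ref{ultra}) only controls $e^{-tL}$ as an operator, whereas (\ref{tool10}) concerns the dimension-weighted sum $\sum_{\alpha}n_\alpha^2 e^{-tl(\alpha)^{1/2}}$; to produce this sum on the $L^\infty$-side of (\ref{ultra}) we must apply $e^{-tL}$ to a function $f$ that is simultaneously normalized in $L^1(M)$ uniformly in $t$ and positive in the sense making Lemma \ref{tool3} applicable (positive Fourier coefficients in the $C(G)$-case, a positive function in the $C_r^*(G)$-case), for then $\|e^{-tL}f\|_{L^\infty(M)}$ equals exactly $(e^{-tL}f)(e)=\sum_\alpha n_\alpha^2\,\widehat{e^{-tL}f}(\alpha)$. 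The naive candidate $\sum_\alpha n_\alpha e^{-\frac{t}{2}l(\alpha)^{1/2}}\chi_\alpha$ meets the positivity requirement but carries no a priori $L^1$-bound, so the plan is instead to feed (\ref{ultra}) the genuine positive unit-mass kernels already at hand — the heat measures of Lemma \ref{tool3}(1) in the Lie case, a F\o lner bounded approximate identity of $A(G)$ in the discrete case — and to recover the full sum by a harmless limiting step. Set $\gamma:=\frac{2\beta}{2-p}$. Note first that for each fixed $t>0$ the series $\sum_\alpha n_\alpha^2 e^{-tl(\alpha)^{1/2}}$ converges: in the Lie case, after inserting a heat factor $e^{-r\kappa_\pi}$, it is dominated by $\sum_\pi n_\pi^2 e^{-r\kappa_\pi}<\infty$ (Lemma \ref{conv.cond.}(1)), and in the discrete case $n_g\equiv 1$ and it is dominated by $\sum_g(1+|g|)^{-s}<\infty$ for large $s$ (Lemma \ref{conv.cond.}(2)); so (\ref{tool10}) is purely a statement about the $t\downarrow 0$ rate, which is precisely what (\ref{ultra}) records.

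For $\g=C(G)$ with $G$ a compact Lie group, fix $t,r>0$ and apply (\ref{ultra}) to the heat measure $\nu_r$ of Lemma \ref{tool3}(1): it is a central probability measure, so $\nu_r\in L^1(M)$ with $\|\nu_r\|_{L^1(M)}=1$ and $\widehat{\nu_r}(\pi)=e^{-r\kappa_\pi}Id_{n_\pi}$, hence by the multiplier form of $e^{-tL}$ displayed in (\ref{ultra}) the element $e^{-tL}\nu_r$ has Fourier coefficients $e^{-tl(\pi)^{1/2}}e^{-r\kappa_\pi}Id_{n_\pi}\ge 0$, which are scalar multiples of the identity; Lemma \ref{tool3}(2) then evaluates $\|e^{-tL}\nu_r\|_{L^\infty(M)}=\sum_\pi n_\pi^2 e^{-tl(\pi)^{1/2}}e^{-r\kappa_\pi}$ (the heat factor also guaranteeing convergence of this series). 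Combining with (\ref{ultra}) gives $\sum_\pi n_\pi^2 e^{-tl(\pi)^{1/2}}e^{-r\kappa_\pi}\le C' t^{-\gamma}$ for all $r>0$, and letting $r\downarrow 0$ by monotone convergence yields $\sum_\pi n_\pi^2 e^{-tl(\pi)^{1/2}}\le C' t^{-\gamma}$, which is (\ref{tool10}).

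For $\g=C_r^*(G)$ with $G$ of polynomial growth the argument is parallel, now with $M=VN(G)$: such $G$ is amenable, so by Lemma \ref{tool3}(3) $A(G)$ admits a bounded approximate identity $\{e_\iota\}$ of positive, compactly supported functions, which one may take positive-definite and normalized (e.g. $e_\iota=1_{F_\iota}*1_{F_\iota^{-1}}/|F_\iota|$ for F\o lner sets $F_\iota$), so that $\lambda(e_\iota)\ge 0$, $\|\lambda(e_\iota)\|_{L^1(VN(G))}=h(\lambda(e_\iota))=e_\iota(e)=1$, and $e_\iota\to 1$ pointwise. Fix $t>0$; since $\chi_g=\lambda_g$ and $L\chi_g=l(g)^{1/2}\chi_g$, linearity gives $e^{-tL}\lambda(e_\iota)=\lambda(e_\iota\cdot h_t)$ with $h_t(g):=e^{-tl(g)^{1/2}}$, and $e_\iota h_t$ is a positive finitely supported function, so Lemma \ref{tool3}(4) gives $\|e^{-tL}\lambda(e_\iota)\|_{L^\infty(VN(G))}=\sum_g e_\iota(g)e^{-tl(g)^{1/2}}$. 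By (\ref{ultra}) this is $\le C' t^{-\gamma}$, and Fatou's lemma as $\iota\to\infty$ gives $\sum_g e^{-tl(g)^{1/2}}\le C' t^{-\gamma}$; since $n_g\equiv 1$ this is again (\ref{tool10}). The delicate point is entirely the choice of test functions in the two cases; everything else (the multiplier form of $e^{-tL}$ on these elements, available directly from (\ref{ultra}), and the bookkeeping with $\gamma$) is routine.
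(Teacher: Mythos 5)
Your proof is correct and follows essentially the same route as the paper's: in the Lie case it feeds the heat measures $\nu_r$ into (\ref{ultra}), evaluates the $L^\infty$-norm via Lemma \ref{tool3}(2), and lets $r\downarrow 0$; in the discrete case it feeds a positive bounded approximate identity of $A(G)$ into (\ref{ultra}), evaluates via Lemma \ref{tool3}(4) (Kesten's condition), and passes to the limit. The only cosmetic difference is that you invoke monotone convergence/Fatou where the paper writes a supremum over the same family.
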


\begin{proof}
(1) By Lemma \ref{tool3},
\begin{align*}
\sum_{\pi\in \widehat{G}}\frac{n_{\pi}^2}{e^{tl(\pi)^{\frac{1}{2}}}}&=\sup_{r>0}\sum_{\pi\in \widehat{G}}\frac{n_{\pi}^2}{e^{tl(\pi)^{\frac{1}{2}}}e^{r\kappa_{\pi}}}\\ &=\sup_{r>0}||e^{-tL}(\nu_r)||_{L^{\infty}(G)}\\
&\leq \frac{C'}{t^{\frac{2\beta}{2-p}}}\sup_{r>0}||\nu_r||_{L^1(G)}\leq \frac{C'}{t^{\frac{2\beta}{2-p}}}
\end{align*}
for all $0<t<\infty$.

(2) There exists a bounded approximate identity $(e_i)_i$ in $A(G)$ that consists of positive and compactly supported functions since polynomially growing discrete group is always amenable. Then inequality (\ref{ultra}) says that
\[\sum_{g\in G}\frac{1}{e^{tl(g)^{\frac{1}{2}}}}=\sup_i\sum_{g\in G}\frac{e_i(g)}{e^{tl(g)^{\frac{1}{2}}}}=\sup_i ||\sum_{g\in G}\frac{e_i(g)}{e^{tl(g)^{\frac{1}{2}}}}\lambda_g||_{C_r^*(G)}\leq \frac{C''}{t^{\frac{2\beta}{2-p}}}\]
since $\displaystyle \lim_i e_i(g)= 1$ for all $g\in G$.

\end{proof}

Proposition \ref{claim1} allows us to extract an interesting quantitive observation.

\begin{proposition}\label{tool4}
Let $\g$ be $C(G)$ with compact Lie group or $C_r^*(G)$ with polyomially growing discrete group. Also, suppose that the inequality (\ref{sobolev1}) holds. Then we have that
\begin{equation}\label{want2}
\sum_{\alpha\in \mathrm{Irr}(\g)}\frac{n_{\alpha}^2}{l(\alpha)^{\frac{m}{2}}}<\infty~\mathrm{for~all~natural~numbers~}m>\frac{2\beta}{2-p}.
\end{equation}
\end{proposition}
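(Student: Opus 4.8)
The plan is to deduce the polynomial-weight summability from the heat-semigroup bound of Proposition \ref{claim1} by an elementary subordination argument built on the Gamma integral.

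First I would record Proposition \ref{claim1} in the compact form $g(t)\le C\,t^{-2\beta/(2-p)}$ for all $t>0$, where $g(t):=\sum_{\alpha\in\mathrm{Irr}(\g)}n_\alpha^2\,e^{-t\,l(\alpha)^{1/2}}$. Specializing to $t=1$ yields $\sum_{\alpha}n_\alpha^2\,e^{-l(\alpha)^{1/2}}<\infty$; since every $n_\alpha\ge 1$ and every $l(\alpha)>0$, I claim this forces $\delta:=\inf_\alpha l(\alpha)>0$, for otherwise infinitely many of these summands would remain bounded away from $0$. With $\delta>0$ in hand, for $t\ge 1$ one writes $e^{-t\,l(\alpha)^{1/2}}\le e^{-l(\alpha)^{1/2}}e^{-(t-1)\sqrt{\delta}}$ for every $\alpha$ and sums, obtaining $g(t)\le C\,e^{-(t-1)\sqrt{\delta}}$. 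Thus $g$ decays polynomially near $0$ and exponentially near $\infty$.

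Next I would use the identity $\Gamma(m)\,l(\alpha)^{-m/2}=\int_0^\infty t^{m-1}e^{-t\,l(\alpha)^{1/2}}\,dt$ (substitute $u=t\,l(\alpha)^{1/2}$), sum over $\alpha\in\mathrm{Irr}(\g)$, and invoke Tonelli's theorem (all integrands nonnegative) to get
\[
\Gamma(m)\sum_{\alpha\in\mathrm{Irr}(\g)}\frac{n_\alpha^2}{l(\alpha)^{m/2}}=\int_0^\infty t^{m-1}g(t)\,dt .
\]
Splitting the right-hand side at $t=1$: the part over $(0,1]$ is at most $C\int_0^1 t^{\,m-1-2\beta/(2-p)}\,dt$, which is finite exactly when $m>2\beta/(2-p)$; the part over $[1,\infty)$ is at most $C\int_1^\infty t^{m-1}e^{-(t-1)\sqrt{\delta}}\,dt$, which is finite for every $m$. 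Hence the sum converges whenever $m>2\beta/(2-p)$, in particular for every natural number $m$ exceeding that threshold, which is the assertion.

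The step I expect to be the main obstacle is controlling $g$ as $t\to\infty$: Proposition \ref{claim1} by itself only gives $g(t)=O(t^{-2\beta/(2-p)})$, which is not integrable against $t^{m-1}$ at infinity, so the argument genuinely requires upgrading this to exponential decay. The one non-formal ingredient is therefore extracting the strictly positive lower bound $\delta=\inf_\alpha l(\alpha)$ from the finiteness of $g(1)$; once that is secured, the remaining subordination computation and the splitting of the integral are routine.
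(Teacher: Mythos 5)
Your argument is correct, and it takes a genuinely different route from the paper. The paper fixes $\gamma\in\bigl(\max\{\tfrac{2\beta}{2-p},m-1\},m\bigr)$ and repeatedly integrates the bound $g(t)\lesssim t^{-\gamma}$ over $[t,1]$, gaining one factor of $l(\alpha)^{-1/2}$ per step and arriving after $m$ iterations at $\sum_\alpha n_\alpha^2 l(\alpha)^{-m/2}e^{-tl(\alpha)^{1/2}}\le D_1t^{m-\gamma}+D_2$, whence the claim follows by letting $t\to0$; this only ever uses the heat bound on $(0,1]$ but is tied to integer $m$ by the induction. You instead use the Mellin--Gamma subordination $\Gamma(m)\,l(\alpha)^{-m/2}=\int_0^\infty t^{m-1}e^{-tl(\alpha)^{1/2}}\,dt$ together with Tonelli, which collapses the induction into a single integral split at $t=1$; the price is that you must control $g$ at infinity, and your upgrade to exponential decay via $\delta:=\inf_\alpha l(\alpha)>0$ (forced by $g(1)<\infty$, since otherwise infinitely many terms of size $\ge e^{-1}$ would appear) is exactly the right fix and is sound. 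Your version is arguably cleaner and strictly more general: it yields the conclusion for every real $m>\tfrac{2\beta}{2-p}$, not only natural numbers, which is all the application in Theorem \ref{comp.char.} actually needs.
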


\begin{proof}
Choose $\gamma\in (\max \left \{ \frac{2\beta}{2-p},m-1\right\},m)$. Then we have
\[\sup_{0<t\leq 1}\left \{ t^{\gamma}\sum_{\alpha\in \mathrm{Irr}(\g)}\frac{n_{\alpha}^2}{e^{tl(\alpha)^{\frac{1}{2}}}} \right\}=:C_0<\infty\]
from (\ref{tool10}), so that
\[\int_t^1 \sum_{\alpha\in \mathrm{Irr}(\g)}\frac{n_{\alpha}^2}{e^{xl(\alpha)^{\frac{1}{2}}}}dx\leq \int_t^1 \frac{1}{x^{\gamma}}dx\]
for all $0<t\leq 1$.

This implies that
\[\sup_{0<t\leq 1}\left \{t^{\gamma-1}\sum_{\alpha\in \mathrm{Irr}(\g)}\frac{n_{\alpha}^2}{l(\alpha)^{\frac{1}{2}}e^{tl(\alpha)^{\frac{1}{2}}}}\right\}=:C_1<\infty,\]
so that we can inductively see that
\[\sup_{0<t\leq 1}\left \{t^{\gamma-(m-1)}\sum_{\alpha\in \mathrm{Irr}(\g)}\frac{n_{\alpha}^2}{l(\alpha)^{\frac{m-1}{2}}e^{tl(\alpha)^{\frac{1}{2}}}}\right\}=:C_{m-1}<\infty.\]

Then there exist $D_1,D_2>0$ such that
\[\sum_{\alpha\in \mathrm{Irr}(\g)}\frac{n_{\alpha}^2}{l(\alpha)^{\frac{m}{2}}e^{tl(\alpha)^{\frac{1}{2}}}}\leq D_1t^{m-\gamma}+D_2~\mathrm{for~all~}0<t\leq 1\]
via the same way.

Lastly, taking the limit $t\rightarrow 0$ completes this proof.
\end{proof}

\begin{theorem}\label{comp.char.}
Let $1<p\leq 2$.
\begin{enumerate}
\item Let $G$ be a compact Lie group with dimension $n$. Then 
\[(\sum_{\pi\in \widehat{G}}\frac{1}{(1+\kappa_{\pi})^{\frac{s}{2}}}n_{\pi}||\widehat{f}(\pi)||_{S^p_{n_{\pi}}}^p)^{\frac{1}{p}}\lesssim ||f||_{L^p(G)}\]
holds if and only if $s\geq \displaystyle n(2-p)$.
\item Let $G$ be a finitely generated discrete group with polynomial growth rate $k_0$. Then 
\[(\sum_{g\in G}\frac{1}{(1+|g|)^s}|f(g)|^p)^{\frac{1}{p}}\lesssim ||\lambda(f)||_{L^p(VN(G))}\]
holds if and only if $s\geq \displaystyle k_0(2-p)$.

\item Let $\g$ be $O_2^+$ or $S_{4}^+$. Then 
\[(\sum_{k\geq 0}\frac{1}{(1+k)^s}n_k||\widehat{f}(k)||_{S^p_{k+1}}^p)^{\frac{1}{p}}\lesssim ||f||_{L^p(\g)}\]
holds if and only if $s\displaystyle  \geq 3(2-p)$.

\item Let $\g$ be $O_N^+$ or $S_{N+2}^+$ with $N\geq 3$. Then 
\[(\sum_{k\geq 0}\frac{1}{r_0^{(2-p)k}(1+k)^s}n_k||\widehat{f}(k)||_{S^p_{n_k}}^p)^{\frac{1}{p}}\lesssim ||f||_{L^p(\g)}\]
holds if and only if $s\displaystyle  \geq 4-2p$, where $r_0=\displaystyle \frac{N+\sqrt{N^2-4}}{2}$.
\end{enumerate}
\end{theorem}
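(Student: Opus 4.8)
The ``if'' direction is immediate: at the threshold value of $s$ each inequality is one of the Paley-- and Hardy--Littlewood-type inequalities already proved. Part~(1) with $s=n(2-p)$ is Corollary~\ref{poly1} applied with the growth exponent $\gamma=n$ (the dimension of $G$, so that $\sum_{|\pi|\le k}n_\pi^2\lesssim(1+k)^n$) together with Remark~\ref{rmk1}; part~(2) with $s=k_0(2-p)$ is Theorem~\ref{co-comm}(1); part~(3) with $s=3(2-p)$ is Corollary~\ref{poly1} with $\gamma=3$ (since $n_k=k+1$ gives $\sum_{j\le k}(j+1)^2\asymp(1+k)^3$); part~(4) with $s=4-2p$ is Theorem~\ref{main-Free.Ortho.}(2). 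For larger $s$ the weight only decreases, so the inequality is inherited by monotonicity.

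For the ``only if'' direction I would argue uniformly through the ultracontractivity machinery of this section. Assume the displayed inequality holds with exponent $s$ and restrict it to the central von Neumann subalgebra $M$ generated by $\{\chi_\alpha\}$. For $f\sim\sum_\alpha c_\alpha\chi_\alpha$, centrality of $\chi_\alpha$ makes $\widehat f(\alpha)$ a scalar matrix, and the orthogonality relations give $\widehat f(\alpha)=\tfrac{c_\alpha}{n_\alpha}\mathrm{Id}_{n_\alpha}$, hence $n_\alpha\|\widehat f(\alpha)\|_{S^p_{n_\alpha}}^p=n_\alpha^{2-p}|c_\alpha|^p$ and $\|f\|_{L^p(\g)}=\|f\|_{L^p(M)}$. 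In parts~(3) and~(4) I would then transport the resulting inequality to $G=SU(2)$ (resp.\ $SO(3)$) via Lemma~\ref{monoidal}, which is an $L^p$-isometry of the central algebras for every $1\le p\le\infty$; in the exponentially growing case~(4) the estimate $n_k\approx r_0^k$ makes $n_k^{2-p}r_0^{-(2-p)k}\asymp1$, so the $r_0$-weight cancels up to constants. In all four cases one ends up, on the algebra $M'$ of class functions of a compact group for which Propositions~\ref{claim1} and~\ref{tool4} are available ($M'$ is the central algebra of $G$ in~(1), of $SU(2)$ or $SO(3)$ in~(3)--(4), and $M'=VN(G)$ itself in~(2)), with an inequality of the shape $\big(\sum_\pi(1+\kappa_\pi)^{-\sigma/2}n_\pi^{2-p}|c_\pi|^p\big)^{1/p}\lesssim\|\sum_\pi c_\pi\chi_\pi\|_{L^p(M')}$, where $(1+\kappa_\pi)$ is read as $(1+|g|)^2$ in case~(2), $\sigma=s$ in parts~(1)--(3), and $\sigma=s+(2-p)$ in part~(4).

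Next I would feed this into inequality~(\ref{sobolev1}). Applying the inverse Hausdorff--Young inequality $\|g\|_{L^{p'}(\g)}\le\|\widehat g\|_{\ell^p(\widehat{\g})}$ (the adjoint form of the contraction $\F\colon L^p\to\ell^{p'}$) to $g=J(f):=\sum_\pi(1+\kappa_\pi)^{-\beta/p}c_\pi\chi_\pi$ with $l(\pi):=1+\kappa_\pi$ and $\beta:=\sigma/2$, and using the computation above for the Fourier coefficients, gives $\|J(f)\|_{L^{p'}(M')}\le\big(\sum_\pi(1+\kappa_\pi)^{-\sigma/2}n_\pi^{2-p}|c_\pi|^p\big)^{1/p}\lesssim\|f\|_{L^p(M')}$, which is exactly~(\ref{sobolev1}). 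Then [Theorem~1.1,~\cite{Xi16}] yields the ultracontractivity bound~(\ref{ultra}) with exponent $2\beta/(2-p)$, Proposition~\ref{claim1} upgrades it to $\sup_{t>0}\{t^{2\beta/(2-p)}\sum_\pi n_\pi^2 e^{-t(1+\kappa_\pi)^{1/2}}\}<\infty$, and Proposition~\ref{tool4} yields $\sum_\pi n_\pi^2(1+\kappa_\pi)^{-m/2}<\infty$ for every natural number $m>2\beta/(2-p)$. By Lemma~\ref{conv.cond.} this series converges only when $m$ exceeds the dimension $n$ (resp.\ the growth rate $k_0$, resp.\ $3$ for $SU(2)$ and $SO(3)$); hence if $2\beta/(2-p)$ were strictly below that integer there would be a natural $m$ with $2\beta/(2-p)<m\le$ (the integer), a contradiction, so $2\beta/(2-p)\ge n$ (resp.\ $k_0$, resp.\ $3$). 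Unwinding $\beta=\sigma/2$ gives $s\ge n(2-p)$, $s\ge k_0(2-p)$, $s\ge3(2-p)$ in parts~(1)--(3), and $s+(2-p)\ge3(2-p)$, i.e.\ $s\ge4-2p$, in part~(4).

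The step I expect to be the main obstacle is the bookkeeping in part~(4): one must check carefully that Lemma~\ref{monoidal} really is an $L^p$-isometry of the \emph{central} algebras, that $n_k\approx r_0^k$ cancels the $r_0$-weight exactly up to constants, and that the leftover factor $n_\pi^{2-p}\asymp(1+k)^{2-p}$ coming from the $SU(2)$/$SO(3)$ dimensions is absorbed correctly, so that the effective Sobolev exponent is $\beta=(s+2-p)/2$ rather than $s/2$ --- this shift is precisely what converts the threshold $3(2-p)$ of the ambient compact Lie group into the sharp value $4-2p$. A secondary point is that Propositions~\ref{claim1} and~\ref{tool4} are available only for $C(G)$ with $G$ compact Lie and for $C_r^*(G)$ with $G$ of polynomial growth, which is exactly why the detour through Lemma~\ref{monoidal} to $SU(2)$ or $SO(3)$ is indispensable in parts~(3) and~(4).
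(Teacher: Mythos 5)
Your proposal is correct and follows essentially the same route as the paper: the ``if'' parts from the already-established Paley/Hardy--Littlewood inequalities, and the ``only if'' parts by restricting to central elements, passing to $SU(2)$/$SO(3)$ via Lemma~\ref{monoidal} in cases (3)--(4), deducing inequality (\ref{sobolev1}) via the dual Hausdorff--Young inequality, and then invoking Propositions~\ref{claim1} and~\ref{tool4} together with Lemma~\ref{conv.cond.}. The only differences are cosmetic normalizations (your $l(\pi)=1+\kappa_\pi$, $\beta=\sigma/2$ versus the paper's $l(\pi)=(1+\kappa_\pi)^{1/2}$, $\beta=\sigma$), and you correctly identify the exponent shift $\sigma=s+(2-p)$ that produces the threshold $4-2p$ in part (4).
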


\begin{proof}
``If'' parts are obtained from (\ref{HL-cpt.gp.2}), (\ref{HL-poly.disc.}), (\ref{HL-Free.Orth.-1}) and (\ref{HL-Free.Ortho.-2}). To prove the converse direction, firstly, define $l(\alpha)$ by (1) $(1+\kappa_{\pi})^{\frac{1}{2}}$ and (2) $1+|g|$ respectively. Then the assumed inequality
\[(\sum_{\alpha\in \mathrm{Irr}(\g)}\frac{n_{\alpha}}{l(\alpha)^{s}}||\widehat{f}(\alpha)||_{S^p_{n_{\alpha}}}^p)^{\frac{1}{p}}\lesssim ||f||_{L^p(\g)}\] 
implies the inequality (\ref{sobolev1}) for $\beta=s$. Then, by Proposition \ref{tool4} and Lemma \ref{conv.cond.}, we get that (1) $2n\leq \displaystyle \frac{2\beta}{2-p}$, (2) $2k_0\leq \displaystyle \frac{2\beta}{2-p}$  respectively.

In (3) and (4), consider $G$ as $SU(2)$ if $\g=O_N^+$ and $SO(3)$ if $\g=S_{N+2}^+$ for each cases. Also, denote by $\chi_k'$ the character corresponding to $\chi_k$ for each cases. Define $l(k):=1+k$. 

Firstly, in (3), for each $f\sim \displaystyle \sum_{k\geq 0}c_k\chi_k\in L^p(\g)$,
\begin{align*}
||\sum_{k\geq 0}(1+k)^{-\frac{s}{p}}c_k\chi_k||_{L^{p'}(G)}&\leq (\sum_{k\geq 0}\frac{1}{(1+k)^{s+p-2}}|c_k|^p)^{\frac{1}{p}}\\
&\lesssim ||f||_{L^p(\g)}=||f'\sim \sum_{k\geq 0}c_k\chi_k'||_{L^p(G)}.
\end{align*}
by Proposition \ref{monoidal} and Hausdorff-Young inequality.
On the other hand, in (4), for each $f\sim \displaystyle \sum_{k\geq 0}c_k\chi_k\in L^p(\g)$,
\begin{align*}
||\sum_{k\geq 0}(1+k)^{-\frac{s+2-p}{p}}c_k\chi_k||_{L^{p'}(G)}&\leq (\sum_{k\geq 0}\frac{1}{(1+k)^s}|c_k|^p)^{\frac{1}{p}}\\
&\lesssim ||f||_{L^p(\g)}=||f'\sim \sum_{k\geq 0}c_k\chi_k'||_{L^p(G)}
\end{align*}
by the similar way.

Now we can apply Proposition 6.5 and Lemma 6.2 for compact Lie groups again, so that (3) $s\geq 6-3p$ and (4) $s-p+2\geq 6-3p(\Leftrightarrow s\geq 4-2p)$ respectively.
\end{proof}

\section{Some remarks about Sidon sets, Sobolev embedding theorem and quantum torus}

As by-products of this study, we refer to an interesting lacunarity result for compact quantum groups, and present a sobolev embedding theorem type interpretation for $C(G)$ with compact Lie group and for $C_r^*(G)$ with polynomially growing group. Also, we show an explicit inequality on quantum torus $\tor^d_{\theta}$.

\subsection{Sidon set on compact quantum groups}

The study of Lacunarity, especially on Sidon sets, is one of the major subject in harmonic analysis, and recently the notion has been extended to the setting of compact quantum groups \cite{Wa16}. 
\begin{definition}
Let $\g$ be a compact quantum group.
\begin{enumerate}
\item A subset $E\subseteq \mathrm{Irr}(\g)$ is called a Sidon set if there exists $K>0$ such that 
\[||\widehat{f}||_{\ell^1(\widehat{\g})}\leq K ||f||_{L^{\infty}(\g)}~\mathrm{for~all~}f\in Pol_E(\g),\]
where $Pol_E(\g):=\left \{f\in Pol(\g):\widehat{f}(\alpha)=0~\mathrm{for~all~}\alpha\notin E\right\}$.
\item A subset $E\subseteq \mathrm{Irr}(\g)$ is called a central Sidon set if there exists $K>0$ such that 
\[||\widehat{f}||_{\ell^1(\widehat{\g})}\leq K ||f||_{L^{\infty}(\g)}~\mathrm{for~all~}f=\sum_{\alpha\in \mathrm{Irr}(\g)}c_{\alpha}\chi_{\alpha}\in Pol_E(\g).\]
\end{enumerate}
\end{definition}

Let $\g=(A,\Delta)$ be of Kac type and $E\subseteq \mathrm{Irr}(\g)$ be a central sidon set. Then [Proposition 6.4, \cite{Wa16}] implies that there exists $\mu\in M(\g)=C_r(\g)^*$ such that $\widehat{\mu}(\alpha)=(\mu((u^{\alpha}_{j,i})^*))_{1\leq i,j\leq n_{\alpha}}=Id_{n_{\alpha}}$ for all $\alpha\in E$. Since $Pol(\g)$ is dense in $C_r(\g)$, Proposition \ref{prop.rd.} still holds for $\mu\in M(\g)$.

Now, if $\g$ satisfies the assumptions of Proposition \ref{prop.rd.} and if $E\subseteq \mathrm{Irr}(\g)$ is a central sidon set, then we get
\begin{align*}
\infty & >\sup_{k\geq 0}\frac{(\sum_{\alpha\in E_k}n_{\alpha}^2)^{\frac{1}{2}}}{(1+k)^{\beta}}\\
&\geq \sup_{k\geq 0}\frac{|E_k|^{\frac{1}{2}}\min_{\alpha\in E_k}n_{\alpha}}{(1+k)^{\beta}},
\end{align*}
where $E_k:=\left \{\alpha\in E:|\alpha|=k\right\}$.

Therefore, it can not happen simultaneously that $|E|=\infty$ and $n_{\alpha}>r^{|\alpha|}~\forall \alpha\in Irr(\g)$ with $r>1$.

\begin{remark}
\begin{enumerate}
\item The above argument shows that there is no an infinite (central) Sidon set in $U^+_N$ with $N\geq 3$, which are not explained in \cite{Wa16}.
\item Shortly after this research, the author of \cite{Wa16} personally informed me another simple idea to explain $U_N^+$ cases. Under the identification $\mathrm{Irr}(U_N^+$)$\cong \mathbb{F}_2^+$, the fact that 
\[||\chi_{\alpha}||_4=(1+|\alpha|)^{\frac{1}{4}}~\forall \alpha\in \mathbb{F}_2^+\]
implies that there is no an infinite $\Lambda(4)$ set, so that there is no an infinite Sidon set on $U_N^+$ with $N\geq 2$.
\end{enumerate}
\end{remark}

\subsection{Sobolev embedding properties}\label{sobolev}

The contents of Section \ref{Sharp} can be interpreted in terms of Sobolev embeddings properties by  [Theorem 1.1, \cite{Xi16}].

For $C(G)$ with compact Lie group whose real dimension is $n$, the computations in Section \ref{Sharp} says that
\[||(1-\Delta)^{-\frac{\beta}{2}}(f)\sim \sum_{\pi\in \widehat{G}}\frac{n_{\pi}}{(1+\kappa_{\pi})^{\frac{\beta}{2}}}\mathrm{tr}(\widehat{f}(\pi)\pi)||_{L^{p'}(G)}\lesssim ||f||_{L^p(G)}\]
if and only if $\beta \displaystyle \geq \frac{n(2-p)}{p}$ for each $1<p\leq 2$. Moreover, it is equivalent to that
\[||(1-\Delta)^{-\frac{\beta }{2}(\frac{1}{p}-\frac{1}{q})}(f)||_{L^q(G)}\lesssim ||f||_{L^p(G)}\]
if and only if $\beta\geq n$ for each $1<p<q<\infty$. If we define the space $H^s_p(G):=\left \{f\in L^p(G):(1-\Delta)^{\frac{s}{2}}(f)\in L^p(G)\right\}$ as an analogue of Bessel potential space, then the above result is interpreted as
\begin{equation}\label{Sob1}
H_p^{s}(G)\subseteq L^q(G)~\mathrm{if~and~only~if~}s\geq n(\frac{1}{p}-\frac{1}{q})
\end{equation}
for each $1<p<q<\infty$.

On the other hand, if $G$ is a finitely generated discrete group with polynomial growth rate $k_0$, then we define infinitesimal generator $L$ on $C_r^*(G)$ by $\lambda_g\mapsto -|g|\lambda_g$ for all $g\in G$. Then we can derive the Sobolev embedding property of non-commutative spaces $L^p(VN(G))$ as follows.

\begin{equation}\label{Sob2}
||(1-L)^{-\beta(\frac{1}{p}-\frac{1}{q})}(\lambda(f))||_{L^q(VN(G))}\lesssim ||\lambda(f)||_{L^p(VN(G))} \mathrm{~if~and~only~if}~\beta\geq k_0
\end{equation}
for each $1<p<q<\infty$.

The reader may consider another natural infinitesimal generator $L':\lambda_g\mapsto -|g|^2\lambda_g$, but it does not make an essential difference in replacing $(1-L)$ with $(1-L')^{\frac{1}{2}}$.

\subsection{Hardy-Littlewood inequality on Quantum torus}

Quantum torus $\tor^d_{\theta}$ is a widely studied example of ``quantum space''. In this case, we can establish Hardy-Littlewood inequality for $\tor^d_{\theta}$, which is the same form as for $\tor^d$. A proof can be given by repeating the proof of Theorem \ref{paley-CQG}.

\begin{remark}\label{q.torus}
For a quantum torus $\tor^d_{\theta}$, for each $1<p\leq 2$, we have that
\begin{equation}\label{HL-QT}
(\sum_{m\in \z^d}\frac{1}{(1+||m||_1)^{d(2-p)}}|\widehat{x}(m)|^p)^{\frac{1}{p}}\lesssim ||x||_{L_p(\tor^d_{\theta})}.
\end{equation}
\end{remark}

{\bf Acknowledgement.}  The author is grateful to Hun Hee Lee for helpful comments on this work. Also, he would like to thank Simeng Wang for discussing the Sidon set of compact quantum groups.

\end{document}